\title{Extremal values for the square energies of graphs}
\author{Shengtong Zhang}
\date{September 2024}
\address{Department of Mathematics, Stanford University, Stanford, CA 94305, USA}
\email{stzh1555@stanford.edu}
\begin{document}

\begin{abstract}
Let $G$ be a graph with $n$ non-isolated vertices and $m$ edges. The positive / negative square energies of $G$, denoted $s^+(G)$ / $s^-(G)$, are defined as the sum of squares of the positive / negative eigenvalues of the adjacency matrix $A_G$ of $G$. In this work, we provide several new tools for studying square energy, encompassing semi-definite optimization, graph operations, and surplus. Using our tools, we prove the following results on the extremal values of $s^{\pm}(G)$ with a given number of vertices and edges.

1. We have $\min(s^+(G), s^-(G)) \geq n - \gamma \geq \frac{n}{2}$, where $\gamma$ is the domination number of $G$. This verifies a conjecture of Elphick, Farber, Goldberg and Wocjan up to a constant, and proves a weaker version of this conjecture introduced by Elphick and Linz.

2. We have $s^+(G) \geq m^{6/7 - o(1)}$ and $s^-(G) = \Omega(m^{1/2})$, with both exponents being optimal.
\end{abstract}

\maketitle

\section{Introduction}
For a graph $G$, let $\lambda_1 \geq \lambda_2 \geq \cdots \geq \lambda_n$ be the eigenvalues of its adjacency matrix. They are also known as the \emph{spectrum} of $G$. The \emph{energy} of $G$ is defined as the sum over the absolute values of all the eigenvalues
$$\cE(G) = \sum_{i = 1}^n \abs{\lambda_i}.$$
Motivated by applications in molecular chemistry, the energy has attracted considerable attention. We refer the reader to \cite{energy} for some classical bounds on the energy of a graph.

Here we are primarily interested in an $L^2$-analog of the energy. The \emph{positive square energy} of $G$ is defined as the sum of squares of the positive eigenvalues
$$s^+(G) = \sum_{i: \lambda_i \geq 0} \lambda_i^2.$$
Similarly, the \emph{negative square energy} of $G$ is defined as the sum of squares of the negative eigenvalues
$$s^-(G) = \sum_{i: \lambda_i \leq 0} \lambda_i^2.$$
These notions of square energy were introduced by Elphick and Wocjan in \cite{EW13} to provide a bound on the chromatic number. Ando and Lin \cite{AL15} gave a bound on the chromatic number of a graph in terms of the square energy
$$\max\left(\frac{s^+(G)}{s^-(G)}, \frac{s^-(G)}{s^+(G)}\right) + 1 \leq \chi(G).$$
Recently, Guo and Spiro \cite{GS24} showed that $\chi(G)$ can be replaced with the fractional chromatic number $\chi_f(G)$, and Coutinho and Spier \cite{Coutinho2023} further replaced $\chi_f(G)$ with the vector chromatic number $\chi_v(G)$. The square energy is also related to the Bollob\'{a}s-Nikiforov conjecture \cite{BN07}, a hypothetical strengthening of Tur\'{a}n's theorem in spectral graph theory.

In this paper, we study the extremal values of $s^+(G)$ and $s^-(G)$. First, we consider how small $s^+(G)$ and $s^-(G)$ can be given the number of vertices. The following intriguing conjecture of Elphick, Farber, Goldberg and Wocjan \cite{EFGW16} predicts a tight lower bound for $s^{\pm}(G)$. 
\begin{conjecture} 
\label{conjecture:EFGW}
    Let $G$ be a connected graph on $n$ vertices. Then $\min(s^+(G), s^-(G)) \geq n - 1$.
\end{conjecture}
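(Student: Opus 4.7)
The conjecture asserts $s^+(G), s^-(G) \ge n-1$; I focus on proving $s^-(G) \ge n - 1$, since the bound for $s^+$ is parallel (using test matrices that interact positively with $A$). My basic tool is the Cauchy--Schwarz identity
\[
\sqrt{s^-(G)} \;=\; \max_{X \succeq 0,\; \|X\|_F = 1}\bigl(-\operatorname{tr}(AX)\bigr),
\]
coming from the spectral decomposition $A = A_+ - A_-$. Thus to prove $s^-(G) \ge n-1$ it suffices to exhibit a PSD matrix $X$ with $\operatorname{tr}(AX) \le 0$ and $\operatorname{tr}(AX)^{2} \ge (n-1)\operatorname{tr}(X^{2})$.

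\textbf{Test matrices.} Since $G$ is connected, my first attempt would take a spanning tree $T$ and set $X = \sum_{uv \in T}\alpha_{uv}(e_u - e_v)(e_u - e_v)^{\top}$, which is automatically PSD. Using $(e_u - e_v)^{\top} A (e_u - e_v) = -2$ for each $uv \in T \subseteq E(G)$, one finds $\operatorname{tr}(AX) = -2\sum\alpha_{uv}$, and a direct expansion gives $\operatorname{tr}(X^2) = 2(n-1) + \sum_{v}d_T(v)^{2}$ when $\alpha_{uv} \equiv 1$. For a path-like $T$ this already yields $s^-(G) = \Omega(n)$, but for a star-like $T$ the quadratic term blows up. A sturdier construction replaces the spanning tree with a dominating set $D$ of size $\gamma$: for each $v \notin D$ pair $v$ with a neighbor $d(v)\in D$ and set $X=\sum_{v \notin D}(e_v - e_{d(v)})(e_v - e_{d(v)})^{\top}$. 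A similar calculation gives $\operatorname{tr}(AX)=-2(n-\gamma)$ and $\operatorname{tr}(X^2)=3(n-\gamma)+\sum_w |d^{-1}(w)|^{2}$; careful choice of $D$ and $d$, with orthogonalization inside each fiber $d^{-1}(w)$, should push this to $s^-(G) \ge n - \gamma$, recovering the paper's stated bound.

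\textbf{Main obstacle.} The gap from $n - \gamma$ up to $n - 1$ is the hardest step and I do not see how to close it via SDP alone. The two extremal families realize the bound very differently: trees give $s^- = m = n - 1$ via a spectrum symmetric about zero, while $K_n$ gives $s^-(K_n)=n-1$ via a single eigenvalue $-1$ of multiplicity $n-1$. No combinatorial family of PSD test matrices is likely to be simultaneously tight on both. Closing the last $\gamma-1$ gap would probably require combining the SDP bound with a graph-operation reduction---e.g.\ removing a leaf or twin vertex and showing $s^-$ drops by at most $1$---or with a surplus-type correction recovering the edges lost in collapsing $G$ to its dominating-set skeleton. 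Both tools are explicitly flagged in the paper's abstract.
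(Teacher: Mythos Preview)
The statement you are attempting is labeled \emph{Conjecture} in the paper and is \emph{not} proved there; it remains open. The paper's main result in this direction is the weaker bound $\min(s^+(G),s^-(G))\ge n-\gamma(G)$ (Theorem~\ref{thm:main-result-1}), together with the special cases in Corollaries~\ref{cor:main-2-1} and~\ref{cor:main-2-3}. You correctly identify that the gap from $n-\gamma$ to $n-1$ is the genuine obstacle, and you do not claim to close it. So your proposal is not a proof of the conjecture, nor does the paper contain one.

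It is still worth comparing your route to $n-\gamma$ with the paper's. The paper does \emph{not} build a single global test matrix. Instead it proves two structural tools: super-additivity $s^\pm(G)\ge s^\pm(G[U])+s^\pm(G[W])$ (Theorem~\ref{thm:main-1}, via Lemma~\ref{lem:energy-sdp}), and a vertex-removal step showing that any induced $P_3$ contains a vertex whose deletion drops $s^\pm$ by more than $1$ (Theorem~\ref{thm:main-2}, via the $3\times3$ computation in Lemma~\ref{lem:3by3}). These combine to give $s^-(G)\ge n-1$ for graphs with a dominating vertex (Corollary~\ref{cor:main-2-1}), and then partitioning $V$ into the $\gamma$ neighborhoods of a minimum dominating set yields $n-\gamma$. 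Your approach, by contrast, tries to write down one PSD matrix $X$ for the whole graph. The calculation you give is correct as far as it goes, but the bound $\dfrac{4(n-\gamma)^2}{3(n-\gamma)+\sum_w|d^{-1}(w)|^2}$ does not reach $n-\gamma$ when some fiber $d^{-1}(w)$ is large, and the ``orthogonalization inside each fiber'' you invoke is not spelled out. If you orthogonalize naively you lose control of $\operatorname{tr}(AX)$, because the edges among the non-dominating vertices in a fiber now enter the picture; getting this to work is essentially equivalent to proving the dominating-vertex case, which in the paper required the nontrivial $P_3$-removal lemma rather than a single test matrix. So your sketch of the $n-\gamma$ bound is genuinely incomplete, not merely a different packaging of the paper's argument.
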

This conjecture is tight at the two extremes of connected graph: we have $s^+(G) = s^-(G) = n - 1$ when $G$ is a tree, and $s^-(G) = n - 1$ when $G = K_n$. 

This conjecture has attracted some attention from the spectral graph theory community; for example, it is listed as the first conjecture in a recent survey \cite{liu23unsolved} by Liu and Ning. In \cite{Abiad23, EFGW16, EL24}, many useful partial results and computational data are gathered for this conjecture. For example, the conjecture is shown for bipartite graphs, in which case $s^+(G) = s^-(G) = \abs{E(G)}$ due to symmetry of spectrum. It is proved for regular graphs using Ando-Lin's chromatic number bound together with Brooks' theorem. It is also verified for special classes of graphs, such as the barbell graph and all connected graphs with up to $10$ vertices. However, before our work, the best general lower bound on $\min(s^-(G), s^+(G))$ is $\sqrt{n}$ in \cite{EL24}.

Next, we consider what happens if we fix the number of edges instead. This is a natural problem, since the sum of $s^+(G)$ and $s^-(G)$ is exactly twice the number of edges.
\begin{problem}
\label{problem:edges}
    Let $G$ be a graph with $m$ edges. What are the minimum values of $s^+(G)$ and $s^-(G)$?
\end{problem}
Intuitively, $s^-(G)$ should be minimized when $G$ is a clique, in which case we have $s^-(G) = \Theta(\sqrt{m})$. However, the minimum value of $s^+(G)$ seems less straightfoward: at both extremes of connected graphs, i.e. trees and cliques, $s^+(G)$ is equal to $\Theta(m)$.
\subsection{Results on \cref{conjecture:EFGW}}
Towards \cref{conjecture:EFGW}, we prove a lower bound on $s^{\pm}(G)$ in terms of the number of vertices and the domination number. Recall that a \emph{dominating set} $D$ in $G$ is a set of vertices such that every vertex of $G$ either lies in $D$ or is the neighbor of a vertex in $D$. The \emph{domination number} $\gamma(G)$ of $G$ is the size of the smallest dominating set in $G$.
\begin{theorem}
\label{thm:main-result-1}    
Let $G$ be a graph on $n$ vertices. Then we have
$$\min(s^+(G), s^-(G)) \geq n - \gamma(G).$$
\end{theorem}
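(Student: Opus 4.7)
Writing $A = A^+ - A^-$ with both $A^\pm \succeq 0$ and $s^\pm(G) = \|A^\pm\|_F^2$, Cauchy--Schwarz in the Frobenius inner product gives $\mathrm{tr}(A^\pm Y)^2 \leq s^\pm(G)\,\mathrm{tr}(Y^2)$ for any symmetric $Y$. When $Y \succeq 0$, we additionally have $\mathrm{tr}(A^+ Y) \geq \mathrm{tr}(AY)$ and $\mathrm{tr}(A^- Y) \geq -\mathrm{tr}(AY)$ (since $A^\mp \succeq 0$), so we obtain the semidefinite-style lower bounds
\[
s^-(G)\,\mathrm{tr}(Y^2) \geq \mathrm{tr}(AY)^2 \quad \text{when } \mathrm{tr}(AY) \leq 0,
\]
and symmetrically for $s^+$ when $\mathrm{tr}(AY) \geq 0$. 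The plan is to produce, from a minimum dominating set $D$, PSD matrices $Y^\pm$ realising the bound $n - |D|$.

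A natural first attempt is the rank-one witness $Y = \mathbf{u}\mathbf{u}^T$ with the two-block vector $\mathbf{u} = \alpha \mathbf{1}_D + \beta \mathbf{1}_{V \setminus D}$, taking $\alpha, \beta$ of the same sign for $s^+$ and opposite signs for $s^-$. Expanding $\mathrm{tr}(AY)$ in terms of the edge counts $e(D)$, $e(V \setminus D)$, $e(D, V \setminus D) \geq n - \gamma$ (this last inequality is all we know from domination) and optimizing the ratio $\alpha/\beta$ yields the bound $n-\gamma$ in sparse extremal cases such as the star $K_{1,n-1}$, where a single eigenvalue of magnitude $\sqrt{n-1}$ carries all of $s^\pm = n - 1$.

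The main obstacle is that this rank-one ansatz is far too weak in dense extremal cases: in $K_n$ the value $s^-(K_n) = n - 1$ comes from $n - 1$ eigenvalues of magnitude $1$, which no rank-one matrix built from $D$ can detect. For $K_n$ the natural witness is instead the high-rank projection $Y = I - \mathbf{1}\mathbf{1}^T/n$. I would therefore attempt to unify these regimes by either (i) a continuous parametric family interpolating between rank-one and projection witnesses, with the parameter tuned to achieve $n-\gamma$ universally; or (ii) an inductive reduction to the base case $\gamma = 1$ via a graph-operation argument (removing a vertex of $V \setminus D$ and tracking $s^\pm$ through Cauchy interlacing), followed by a direct proof of the dominating-vertex case that separately handles its ``sparse'' (one large eigenvalue) and ``dense'' (many small) spectral modes. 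The bound for $s^+$ follows by the symmetric witness construction.
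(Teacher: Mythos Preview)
Your variational inequality $s^\pm(G)\,\mathrm{tr}(Y^2) \ge \mathrm{tr}(AY)^2$ for $Y \succeq 0$ with the appropriate sign is correct, and coincides with what the paper proves as its Lemma~\ref{lem:energy-sdp-2}. You also correctly diagnose that a single rank-one witness built from $\mathbf{1}_D$ and $\mathbf{1}_{V\setminus D}$ cannot handle both the star and the clique. But neither of your proposed resolutions closes the gap.

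For option (ii), the concrete mechanism you name fails: Cauchy interlacing gives $s^\pm(G) \ge s^\pm(G\setminus\{v\})$, \emph{not} $s^\pm(G) \ge s^\pm(G\setminus\{v\}) + 1$, and the latter is what your induction needs when you delete $v \in V\setminus D$ (since the target $n-\gamma$ drops by~$1$). That stronger inequality is simply false in general. The paper does \emph{not} peel off one vertex at a time; instead it partitions $V$ into $|D|$ blocks $V_i$, one per dominating vertex $i$, with $i$ dominating $G[V_i]$, and uses the super-additivity
\[
s^\pm(G) \ \ge\ \sum_{i\in D} s^\pm(G[V_i]),
\]
which is Theorem~\ref{thm:main-1} (proved via the dual SDP characterisation, Lemma~\ref{lem:energy-sdp}, by restricting the optimal $M$ to diagonal blocks). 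This reduces everything to the $\gamma=1$ case in one stroke, losing exactly one per block and hence $|D|$ in total. Your plan is missing this partition-and-super-additivity step.

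Even granting the reduction, the dominating-vertex case is not as soft as ``handle sparse and dense modes separately'' suggests. The paper's proof (Corollary~\ref{cor:main-2-1}) iteratively removes a vertex from an induced $P_3$ in $G\setminus\{v_*\}$ using Theorem~\ref{thm:main-2}, which guarantees a drop of \emph{strictly more than}~$1$ in $s^-$; this in turn rests on a delicate $3\times 3$ computation (Lemma~\ref{lem:3by3}) that is not visible from your high-level dichotomy. Once no induced $P_3$ remains, $G\setminus\{v_*\}$ is a disjoint union of cliques and the endgame is explicit spectrum-chasing. Option (i) as stated is too vague to assess, and I do not see a single parametric PSD family that simultaneously certifies $n-1$ for $K_{1,n-1}$ and $K_n$; if you pursue it, that pair is the first test.
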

Using various upper bounds on $\gamma(G)$, we record several corollaries of interest. For example, if $G$ has no isolated vertices, then $\gamma(G) \leq \frac{n}{2}$. Thus, we confirm \cref{conjecture:EFGW} up to a constant.
\begin{corollary}
\label{cor:main-1-1}
    Let $G$ be any graph on $n$ vertices with no isolated vertices. Then $\min(s^+(G), s^-(G)) \geq n / 2$.    
\end{corollary}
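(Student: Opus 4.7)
The plan is to invoke Theorem~\ref{thm:main-result-1}, which already gives $\min(s^+(G), s^-(G)) \geq n - \gamma(G)$, and then to feed in the classical upper bound $\gamma(G) \leq n/2$ valid for any graph without isolated vertices (a theorem of Ore from 1962). Combining the two inequalities yields the corollary directly.

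For the bound $\gamma(G) \leq n/2$ I would argue componentwise. Since $G$ has no isolated vertex, each connected component $C$ has at least two vertices, and any spanning tree of $C$ carries a natural proper $2$-coloring. Both color classes are dominating sets of $C$, because in the tree every vertex has all its neighbors in the opposite class; taking the smaller class produces a dominating set of $C$ of size at most $|C|/2$. The union of these sets across components is a dominating set of $G$ of size at most $n/2$.

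Given Theorem~\ref{thm:main-result-1}, the argument has no substantive obstacle---its sole purpose is to translate the domination-number bound into the headline statement $\min(s^+(G), s^-(G)) \geq n/2$, thereby confirming Conjecture~\ref{conjecture:EFGW} up to a factor of two. If one wished to avoid citing Ore, the spanning-tree $2$-coloring sketch above is completely self-contained and adds at most a few lines.
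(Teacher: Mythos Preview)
Your proof is correct, and indeed the paper's introduction derives Corollary~\ref{cor:main-1-1} from Theorem~\ref{thm:main-result-1} in exactly this way. However, the paper's \emph{formal} proof of the corollary, given in Section~\ref{sec:additive}, deliberately takes a lighter route that avoids Theorem~\ref{thm:main-result-1} altogether. It uses only the super-additivity theorem (Theorem~\ref{thm:main-1}) together with a combinatorial decomposition (Lemma~\ref{lem:decompose-stars-cliques}): any graph without isolated vertices admits a vertex partition $V_1,\dots,V_k$ with each $|V_i|\geq 2$ and each $G[V_i]$ a star or a clique; since stars and cliques satisfy $\min(s^+,s^-)\geq |V_i|-1\geq |V_i|/2$, summing via Corollary~\ref{cor:main-1-restate} yields $n/2$. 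The trade-off is that your argument is a one-liner once Theorem~\ref{thm:main-result-1} is available, but that theorem in turn relies on Theorem~\ref{thm:main-2} and its somewhat delicate $3\times 3$ matrix computation (Lemma~\ref{lem:3by3}); the paper's route is self-contained within the super-additivity framework, which is why the paper remarks that Theorem~\ref{thm:main-1} ``itself already implies'' the corollary.
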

This corollary as stated is tight when $n$ is even. If $G$ is a disjoint union of $n / 2$ edges, then we have $s^+(G) = s^-(G) = n / 2$.

In addition, we can relate $\gamma(G)$ to the inertia of $A_G$, resolving a weaker version of \cref{conjecture:EFGW} proposed by Elphick-Linz in \cite{EL24}.
\begin{corollary}[\cite{EL24}, Conjecture 11]
\label{thm:main-result-1-weak} 
Let $G$ be a graph with no isolated vertex, and let $(n^+, n^0, n^-)$ be the inertia of the adjacency matrix of $G$. Then we have 
$$\min(s^+(G), s^-(G)) \geq \max(n^+, n^0, n^-).$$
\end{corollary}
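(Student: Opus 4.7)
The plan is to derive this corollary from \cref{thm:main-result-1}, which yields $\min(s^+(G), s^-(G)) \geq n - \gamma(G)$. It therefore suffices to show that for any graph $G$ on $n$ vertices without isolated vertices,
\[
\gamma(G) \;\leq\; n - \max(n^+, n^0, n^-).
\]
Using $n = n^+ + n^0 + n^-$, this splits into the three inequalities $\gamma(G) \leq n^0 + n^-$, $\gamma(G) \leq n^0 + n^+$, and $\gamma(G) \leq n^+ + n^-$, each corresponding to one of the three maxima.

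For the first two, I would use that every maximal independent set is a dominating set, giving $\gamma(G) \leq \alpha(G)$, combined with the Cvetkovi\'c inertia bound $\alpha(G) \leq n^0 + \min(n^+, n^-)$. This delivers both of the first two inequalities immediately and makes no use of the no-isolated-vertex hypothesis.

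The third inequality, equivalently $\gamma(G) \leq \operatorname{rank}(A_G)$, is the substantive step and the main obstacle. To prove it, set $r := \operatorname{rank}(A_G)$ and invoke the standard fact that every real symmetric matrix of rank $r$ has an invertible $r \times r$ principal submatrix: diagonalizing $A_G = U'\Lambda'(U')^T$ with $U' \in \mathbb{R}^{n \times r}$ of full column rank, select $R \subseteq V(G)$ with $|R| = r$ so that $U'_R$ is invertible; then $A_{R,R} = U'_R \Lambda' (U'_R)^T$ is invertible as well. I then claim $R$ is a dominating set. Indeed, if $v \notin R$ had no neighbor in $R$, then since the rows of $A_G$ indexed by $R$ form a basis of the row space (because $A_{R,R}$ is nonsingular), one can write $e_v^T A_G = \sum_{u \in R} c_u\, e_u^T A_G$. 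Reading off the columns indexed by $R$ and using $v \not\sim R$ yields $A_{R,R}\, c = 0$, so $c = 0$ by invertibility; but then $e_v^T A_G = 0$, contradicting the assumption that $v$ is not isolated. Hence $\gamma(G) \leq |R| = r$.

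The main obstacle is this third bound: the Cvetkovi\'c argument alone only controls the $n^+$ and $n^-$ terms, and pinning down the $n^0$ term requires the principal-submatrix argument above, which is precisely where the no-isolated-vertex hypothesis is used.
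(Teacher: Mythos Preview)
Your proof is correct and follows the same overall strategy as the paper: derive the corollary from \cref{thm:main-result-1} by showing $\gamma(G) \leq n - \max(n^+, n^0, n^-)$, handling the $n^+$ and $n^-$ cases via $\gamma(G) \leq \alpha(G)$ together with the Cvetkovi\'c inertia bound, and treating the $n^0$ case separately.

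The one genuine difference is in the $n^0$ step. The paper simply cites a result of Abiad et al.\ (that $n - \gamma(G) \geq n^0$ for connected graphs with at least two vertices, extended by additivity to graphs with no isolated vertex). You instead give a direct, self-contained argument: pick $R$ indexing a nonsingular principal $r \times r$ submatrix of $A_G$ with $r = \operatorname{rank}(A_G)$, and show $R$ dominates by deriving that any undominated $v$ would have a zero row in $A_G$. This is more elementary and makes the role of the no-isolated-vertex hypothesis completely transparent; the paper's citation is shorter but less informative about why the inequality holds.
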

This corollary  implies that the ``mean square" of the positive/negative eigenvalues of any graph is at least $1$. The analogous statement for energy, i.e. $\cE(G) \geq 2\min(n^+, n^-)$, is an open conjecture proposed by the automated program ``Written on the Wall" \cite{Abiad23}, \cite[Section 17]{liu23unsolved}.

To prove \cref{thm:main-result-1}, we introduce two new tools. Our first tool shows that the square energies are ``super-additive".
\begin{theorem}
\label{thm:main-1}
Let $G$ be any graph with vertex set $V$, and let $V = U \sqcup W$ be any partition of the vertex set. Then we have
$$s^+(G) \geq s^+(G[U]) + s^+(G[W]) \text{ and } s^-(G) \geq s^-(G[U]) + s^-(G[W]).$$
\end{theorem}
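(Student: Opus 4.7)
The plan is to prove a semi-definite variational characterization of $s^+$ and $s^-$, then observe that the proposed super-additivity is an immediate consequence of choosing a block-diagonal test matrix. Specifically, I will first establish
$$s^+(A) = \max_{B \succeq 0} \bigl( 2\tr(BA) - \tr(B^2) \bigr)$$
for any symmetric matrix $A$, and analogously $s^-(A) = \max_{B \succeq 0} \bigl(-2\tr(BA) - \tr(B^2)\bigr)$. The identity $2\tr(BA) - \tr(B^2) = \tr(A^2) - \|B - A\|_F^2$ shows that maximizing the left-hand side over $B \succeq 0$ is the same as computing the squared Frobenius distance from $A$ to the PSD cone. Writing $A = A^+ - A^-$ for the positive/negative parts, the closest PSD matrix is $A^+$, so the maximum equals $\tr(A^2) - \|A^-\|_F^2 = s^+(A)$, with optimizer $B = A^+$.

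Given the partition $V = U \sqcup W$, I will write the adjacency matrix in block form
$$A_G = \begin{pmatrix} A_U & E \\ E^T & A_W \end{pmatrix},$$
where $A_U = A_{G[U]}$, $A_W = A_{G[W]}$, and $E$ encodes the edges between $U$ and $W$. Let $B_U \succeq 0$ and $B_W \succeq 0$ be the optimizers of the variational formula for $A_U$ and $A_W$, so that $2\tr(B_U A_U) - \tr(B_U^2) = s^+(G[U])$ and likewise for $W$. Set $B := B_U \oplus B_W \succeq 0$. Because $B$ is block diagonal, multiplying $B A_G$ produces a block matrix whose diagonal blocks are $B_U A_U$ and $B_W A_W$, so that the off-diagonal contributions of $E$ are killed by the trace. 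Hence
$$2\tr(B A_G) - \tr(B^2) = \bigl(2\tr(B_U A_U) - \tr(B_U^2)\bigr) + \bigl(2\tr(B_W A_W) - \tr(B_W^2)\bigr) = s^+(G[U]) + s^+(G[W]).$$
Applying the variational lower bound for $s^+(G)$ with this choice of $B$ gives the desired inequality for $s^+$. The statement for $s^-$ follows by the same argument applied to $-A_G$, using $s^-(A) = s^+(-A)$ and the fact that the induced subgraph operation commutes with negating the matrix.

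The main conceptual step is really just the variational formula; once it is in hand, super-additivity becomes a one-line computation because the block-diagonal structure of $B$ exactly matches what is needed to eliminate cross terms. I do not expect any serious obstacle: the SDP formula is a standard Moreau-type projection fact, and the block-diagonal gluing is automatic. The only subtlety worth double-checking is that the maximum over $B \succeq 0$ is genuinely attained (so we really can plug in $B_U$ and $B_W$); this follows because the objective is strongly concave in $B$ and the PSD cone is closed.
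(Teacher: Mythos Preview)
Your proof is correct and uses essentially the same semidefinite-programming characterization as the paper. The only cosmetic difference is that the paper works with the minimization form $s^+(G)=\min_{M\succeq 0}\|A_G+M\|_F^2$ and restricts the global minimizer to the diagonal blocks (using that principal submatrices of a PSD matrix are PSD), whereas you use the equivalent maximization form and build a block-diagonal feasible point from the optimizers of the two subproblems; both routes eliminate the $U\times W$ cross terms in the same way.
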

For comparison, \cite{Abiad23} only stated and used the weaker bound $s^{\pm}(G) \geq s^{\pm}(G[U])$.  We remark that the analogous statement for energy
$$\cE(G) \geq \cE(G[U]) + \cE(G[W])$$
is also true (see e.g. \cite{Thomspon75}, \cite[Eq. 6]{AGO09}).

This theorem can be rephrased as follows.
\begin{corollary}
\label{cor:main-1-restate}
Let $G$ be any graph with vertex set $V$, and let $V = V_1\sqcup \cdots \sqcup V_k$ be any partition of the vertex set. Then we have
$$s^+(G) \geq \sum_{i = 1}^k s^+(G[V_i])\text{ and } s^-(G) \geq  \sum_{i = 1}^k s^-(G[V_i]).$$    
In particular, we also have
$$\min(s^+(G), s^-(G)) \geq \sum_{i = 1}^k \min(s^+(G[V_i]), s^-(G[V_i])).$$
\end{corollary}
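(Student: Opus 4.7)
The plan is to derive both parts of \cref{cor:main-1-restate} directly from \cref{thm:main-1}, using induction for the first part and a straightforward minimum argument for the second. There is no serious obstacle here; the statement is essentially a bookkeeping extension of \cref{thm:main-1}.

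For the first assertion, I would induct on $k$. The base case $k = 1$ is trivial, and the case $k = 2$ is exactly \cref{thm:main-1}. For the inductive step, given a partition $V = V_1 \sqcup \cdots \sqcup V_k$ with $k \geq 2$, set $U = V_1 \sqcup \cdots \sqcup V_{k-1}$ and $W = V_k$. Applying \cref{thm:main-1} to the bipartition $V = U \sqcup W$ gives
$$s^+(G) \geq s^+(G[U]) + s^+(G[W]).$$
Now $G[U]$ is itself a graph with vertex partition $U = V_1 \sqcup \cdots \sqcup V_{k-1}$, so by the inductive hypothesis
$$s^+(G[U]) \geq \sum_{i=1}^{k-1} s^+(G[U][V_i]) = \sum_{i=1}^{k-1} s^+(G[V_i]),$$
where we used $G[U][V_i] = G[V_i]$ since $V_i \subseteq U$. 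Combining these two inequalities yields the desired bound for $s^+$, and the argument for $s^-$ is identical.

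For the ``in particular'' clause, observe that for each $i$ we have both $s^+(G[V_i]) \geq \min(s^+(G[V_i]), s^-(G[V_i]))$ and $s^-(G[V_i]) \geq \min(s^+(G[V_i]), s^-(G[V_i]))$. Summing over $i$ and combining with the two inequalities just proved gives
$$s^+(G) \geq \sum_{i=1}^k \min(s^+(G[V_i]), s^-(G[V_i])) \quad \text{and} \quad s^-(G) \geq \sum_{i=1}^k \min(s^+(G[V_i]), s^-(G[V_i])).$$
Taking the minimum of the two left-hand sides yields the claim.
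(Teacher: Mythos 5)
Your proof is correct and matches the paper's intent: the paper treats \cref{cor:main-1-restate} as an immediate consequence of \cref{thm:main-1} obtained by iterating the two-part case, which is exactly your induction, and your handling of the ``in particular'' clause is the obvious minimum argument. Nothing further is needed.
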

This tool itself already implies \cref{cor:main-1-1}. We now record a few other applications.

A graph is \emph{unicyclic} if it is connected and contains exactly one cycle. In other words, it is obtained by adding an edge to a tree. In \cite{Abiad23}, Abiad et. al. studied and verified \cref{thm:main-1} for many unicyclic graphs, such as the special graphs $U_{n, 3}$ and unicyclic graphs with a long cycle of length $\Omega(\sqrt{n})$. They argue that unicyclic graphs are particularly challenging because they are close to the graphs (i.e. trees) that achieve equality in the bound. We prove a new lower bound on $\min(s^+(G), s^-(G))$ for unicyclic $G$, improving upon the previous result \cite[Corollary 3.5]{Abiad23} when the cycle in $G$ is short.
\begin{corollary}
\label{cor:main-1-2}
    For any unicyclic graph $G$ on $n$ vertices, we have $\min(s^+(G), s^-(G)) \geq n - 2.$
\end{corollary}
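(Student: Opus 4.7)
The plan is to deduce \cref{cor:main-1-2} from the super-additivity principle \cref{cor:main-1-restate} by exhibiting a two-part vertex partition of $G$ whose induced subgraphs are both trees.

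Let $C = v_1 v_2 \cdots v_k v_1$ be the unique cycle of $G$, and observe that removing the $k$ cycle edges leaves a spanning forest with components $T_{v_1}, \ldots, T_{v_k}$, where $T_{v_i}$ is the tree containing $v_i$. The partition I would use is
$$W = V(T_{v_1}) \cup V(T_{v_2}), \qquad U = V(T_{v_3}) \cup \cdots \cup V(T_{v_k}),$$
chosen so that the cycle edge $v_1 v_2$ stitches together the first two trees inside $W$.

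I would next verify that $G[W]$ and $G[U]$ are both trees. Every non-cycle edge of $G$ lies within some $T_{v_i}$; among the cycle edges, only $v_1 v_2$ has both endpoints in $W$, while $v_3 v_4, \ldots, v_{k-1} v_k$ have both endpoints in $U$. Thus $G[W]$ is $T_{v_1} \cup T_{v_2}$ joined by $v_1 v_2$, a tree on $|W|$ vertices, and $G[U]$ is the (possibly degenerate) path $v_3 v_4 \cdots v_k$ with the trees $T_{v_3}, \ldots, T_{v_k}$ attached, again a tree. The only two edges of $G$ that cross the partition are the cycle edges $v_2 v_3$ and $v_k v_1$.

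Since a tree on $p$ vertices satisfies $s^+ = s^- = p - 1$, applying \cref{cor:main-1-restate} yields
$$\min(s^+(G), s^-(G)) \geq (|W| - 1) + (|U| - 1) = n - 2.$$
The only nontrivial step is identifying this partition; once it is in hand, the argument is a routine application of the super-additivity tool. Put more memorably, every unicyclic graph can be split along two cycle edges into two edge-disjoint induced subtrees whose sizes add up to $n$.
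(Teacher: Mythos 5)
Your proof is correct and uses essentially the same argument as the paper: partition the vertex set into two parts inducing trees and apply the super-additivity of $s^{\pm}$. The paper's partition cuts the two cycle edges incident to a single cycle vertex (so one part is the tree hanging off that vertex and the other is everything else), while yours cuts $v_2v_3$ and $v_kv_1$; this difference is immaterial.
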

As another application of \cref{thm:main-1}, we can show that $G$ satisfies \cref{conjecture:EFGW} whenever $G$ admits a nice enough decomposition. For example, we prove that
\begin{corollary}
    \label{cor:main-1-3}
    For any graph $G$ on $n$ vertices, suppose the vertex set of $G$ can be decomposed into subsets $V_1, \cdots, V_k$ such that for each $i$, the induced subgraph $G[V_i]$ is a bipartite graph with at least $\abs{V_i}$ edges (e.g. even cycles). Then $\min(s^+(G), s^-(G)) \geq n$. 
\end{corollary}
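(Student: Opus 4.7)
The plan is to invoke \cref{cor:main-1-restate} directly, since the hypotheses of \cref{cor:main-1-3} are tailor-made to give a strong bound on each piece. For each $i$, the induced subgraph $G[V_i]$ is bipartite, so its spectrum is symmetric about $0$, which gives $s^+(G[V_i]) = s^-(G[V_i])$. Combining this with the identity $s^+(H) + s^-(H) = \mathrm{tr}(A_H^2) = 2\abs{E(H)}$ for any graph $H$, we obtain
$$s^+(G[V_i]) = s^-(G[V_i]) = \abs{E(G[V_i])} \geq \abs{V_i},$$
where the last inequality uses the assumption that $G[V_i]$ has at least $\abs{V_i}$ edges.

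Next I would apply the ``min" form of \cref{cor:main-1-restate} to the partition $V = V_1 \sqcup \cdots \sqcup V_k$, which yields
$$\min(s^+(G), s^-(G)) \geq \sum_{i=1}^k \min(s^+(G[V_i]), s^-(G[V_i])) \geq \sum_{i=1}^k \abs{V_i} = n,$$
finishing the proof. There is no real obstacle here; the content of the corollary lies entirely in \cref{thm:main-1}, and this deduction is a short combination of (i) super-additivity across the partition and (ii) the fact that bipartiteness equates $s^+$ and $s^-$ with the edge count. The example of even cycles $C_{2\ell}$ fits because each such cycle has $2\ell$ vertices and $2\ell$ edges and is bipartite.
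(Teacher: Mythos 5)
Your proposal is correct and matches the paper's argument: both use the fact that a bipartite $G[V_i]$ has $s^+(G[V_i]) = s^-(G[V_i]) = \abs{E(G[V_i])} \geq \abs{V_i}$ and then apply \cref{cor:main-1-restate} to the given partition. The only cosmetic difference is that the paper bounds $s^+(G)$ and $s^-(G)$ separately via the sum form, while you invoke the ``min'' form directly; these are equivalent here.
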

Our second tool arises out of an attempt to prove \cref{conjecture:EFGW} via induction. If there exists a vertex $v$ of $G$ such that $G \backslash \{v\}$ is connected and $s^{\pm}(G) \geq s^{\pm}(G \backslash \{v\}) + 1$, then we can reduce \cref{conjecture:EFGW} for $G$ to \cref{conjecture:EFGW} for $G \backslash \{v\}$. The next theorem sheds light to one situation in which we may do so.
\begin{theorem}
\label{thm:main-2}
Let $G$ be any graph. Suppose $U$ is a set of three vertices in $G$ such that $G[U]$ is isomorphic to the three-vertex path $P_3$. Then there exists a vertex $u \in U$ such that $s^-(G) > s^-(G \backslash \{u\}) + 1$. The same holds if we replace $s^-$ with $s^+$.
\end{theorem}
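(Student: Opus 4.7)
The strategy is to show that the middle vertex $u_2$ of the induced $P_3$ always works, via the sharper bound
\[
s^-(G) \;\geq\; s^-(G - u_2) + 2.
\]
Here $U = \{u_1, u_2, u_3\}$ with $u_1 u_2, u_2 u_3 \in E(G)$ and $u_1 u_3 \notin E(G)$. The bound $+2$ is tight already at $G = P_3$, where $s^-(G) = 2$ and $s^-(G - u_2) = s^-(2K_1) = 0$; small-example checks (paths, complete bipartite graphs $K_{2,n}$, and the bow-tie graph) all confirm the $+2$ drop at the middle vertex. The statement for $s^+$ is obtained identically, since $s^+(P_3) = s^-(P_3) = 2$.

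The cleanest way to establish the $+2$ drop is a \emph{marginal} strengthening of super-additivity: for any $u \in U \subseteq V(G)$,
\[
s^-(G) - s^-(G - u) \;\geq\; s^-(G[U]) - s^-(G[U] - u).
\]
With $u = u_2$ the middle of the induced $P_3$, the right-hand side equals $s^-(P_3) - s^-(2K_1) = 2$, giving the theorem. Theorem \ref{thm:main-1} applied separately to $G$ and to $G - u$ gives $s^-(G) \geq s^-(G[U]) + s^-(G - U)$ and $s^-(G - u) \geq s^-(G[U] - u) + s^-(G - U)$, but since both are lower bounds, subtracting them does not yield the marginal inequality; extra input beyond Theorem \ref{thm:main-1} as a black box is required.

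I would prove the marginal inequality using the semi-definite programming perspective underlying Theorem \ref{thm:main-1}, together with the dual characterization
\[
s^-(M) \;=\; \min\bigl\{\operatorname{tr}(Y^2) \,:\, Y \preceq M,\; Y \preceq 0\bigr\},
\]
whose optimum is attained at $Y = M_-$. The plan is to take an optimal witness $Y^*_{G-u}$ for $A_{G-u}$ together with the optimal witnesses for $A_{G[U]}$ and $A_{G[U] - u}$; construct a candidate witness for $A_G$ by embedding $Y^*_{G-u}$ (with zero row/column at $u$) and adding a $U$-supported correction equal to the optimal for $A_{G[U]}$ minus the zero-padded optimal for $A_{G[U]-u}$; then verify feasibility and compute that the resulting $\operatorname{tr}(Y^2)$ exceeds $s^-(G - u) + s^-(G[U]) - s^-(G[U] - u)$.

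The main obstacle is the feasibility verification: after combining the two witnesses, one must check $Y \preceq A_G$ and $Y \preceq 0$. The latter is automatic, but the former requires a Schur complement argument through the cross-block between $U$ and $V - U$, where the $U$-supported correction interacts with $A_G$'s off-diagonal edges in a nontrivial way. Working this out carefully, using the specific structure of $G[U] = P_3$, is the technical heart. If the SDP route falters, a spectral alternative is to combine Cauchy interlacing on the two-dimensional subspace $W = \operatorname{span}\{e_{u_2},\,(e_{u_1}+e_{u_3})/\sqrt 2\}$ of $\mathbb{R}^n$ (on which $A_G$ restricts to $\bigl(\begin{smallmatrix}0 & \sqrt 2\\ \sqrt 2 & 0\end{smallmatrix}\bigr)$, with eigenvalues $\pm\sqrt 2$) with the observation that the analogous restriction in $A_{G - u_2}$ is one-dimensional with eigenvalue $0$; quantifying the lost $-\sqrt 2$ eigenvalue mass should yield the $+2$ bound on $s^-(G) - s^-(G - u_2)$.
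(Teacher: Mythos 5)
Your central claim is false, and it is the load-bearing step of the whole plan. You assert that the middle vertex $u_2$ of the induced $P_3$ always satisfies $s^-(G) \geq s^-(G - u_2) + 2$, via the ``marginal super-additivity'' inequality $s^-(G) - s^-(G-u) \geq s^-(G[U]) - s^-(G[U]-u)$. Take $G$ to be the diamond $K_4$ minus the edge $\{1,3\}$, with $U = \{1,2,3\}$ inducing the path $1\text{--}2\text{--}3$ and $u = u_2 = 2$. The spectrum of the diamond is $\tfrac{1+\sqrt{17}}{2},\, 0,\, -1,\, \tfrac{1-\sqrt{17}}{2}$, so $s^-(G) = \tfrac{11-\sqrt{17}}{2} \approx 3.44$, while $G - u_2$ is a path on $\{1,4,3\}$ with $s^- = 2$; the drop is about $1.44 < 2$, whereas your right-hand side equals $s^-(P_3) - s^-(2K_1) = 2$. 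So both the $+2$ claim at the middle vertex and the marginal inequality fail. The failure is not marginal: for $K_n$ minus an edge, deleting the midpoint of the induced $P_3$ gives $K_{n-1}$ minus an edge, and the drop in $s^-$ tends to $1$ from above as $n \to \infty$, so no constant strictly larger than (roughly) $1$ can hold for the middle vertex uniformly. Even setting this aside, the parts of your argument that would have to carry the proof (the Schur-complement feasibility check in the SDP construction, or the interlacing alternative) are explicitly left open, so no complete proof is given for any vertex choice.

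The paper's proof is structured precisely to avoid committing to a particular vertex: starting from the optimizer $M \in \cS_V$ with $s^-(G) = \lVert A_G - M\rVert^2$ (\cref{lem:energy-sdp}), it proves a $3\times 3$ lemma (\cref{lem:3by3}) saying that for \emph{any} positive semi-definite $M'$, some row/column of $A_{P_3} - M'$ has squared mass strictly greater than $1$; which row works depends on $M|_U$, hence on $G$, and (as the diamond and $K_n - e$ show) it need not be the middle vertex, nor can the constant be improved to $2$. If you want to salvage your approach, you would need either to prove a vertex-dependent statement of this kind or to find a correct marginal inequality with a constant compatible with the $K_n - e$ examples; as written, the proposal does not prove \cref{thm:main-2}.
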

As applications of this tool, we establish \cref{conjecture:EFGW} for two new classes of graphs that seem out of reach with previous methods. First, suppose $G$ has a \emph{dominating vertex}, which is a vertex adjacent to every other vertex. In \cite{Abiad23} Abiad et. al. observe that since $K_{1, n - 1}$ is a subgraph of $G$, we have $\lambda_1 \geq \sqrt{n - 1}$, so $s^+(G) \geq \lambda_1^2 = n - 1$. However, no nontrivial bound is available for the least eigenvalue of $G$, since for $G = K_{n}$ the least eigenvalue of $G$ is $-1$. In \cite{Abiad23}, there is no discussion of $s^-(G)$ for general graphs $G$ with a dominating vertex. 

Using \cref{thm:main-2}, we verify \cref{conjecture:EFGW} for all graphs with a dominating vertex.
\begin{corollary}
\label{cor:main-2-1}    
Let $G$ be a graph on $n$ vertices with a dominating vertex. Then $s^-(G) \geq n - 1$, with equality if and only if $G \cong K_{1, n - 1}$ or $G \cong K_{n}$.
\end{corollary}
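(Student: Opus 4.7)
The plan is to prove the corollary by strong induction on $n$. The base cases $n \leq 2$ are immediate since then $G = K_{1,n-1}$ and $s^-(G) = n-1$. For the inductive step, let $v$ be a dominating vertex and set $H = G \setminus \{v\}$; I would split into two cases according to whether $H$ contains an induced $P_3$.

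If $H$ contains an induced $P_3$ on some vertex set $U$, then $G[U] = H[U] \cong P_3$ is also an induced $P_3$ in $G$, and \cref{thm:main-2} produces a vertex $u \in U$ with $s^-(G) > s^-(G \setminus \{u\}) + 1$. Crucially, $u \neq v$ since $v \notin U$, so $v$ remains dominating in $G \setminus \{u\}$, and the inductive hypothesis gives $s^-(G \setminus \{u\}) \geq n - 2$. This yields the strict inequality $s^-(G) > n - 1$.

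If $H$ has no induced $P_3$, then $H$ is a disjoint union of cliques $K_{a_1} \cup \cdots \cup K_{a_k}$ and $G = K_1 \vee H$. The extremal cases $k = 1$ (giving $G = K_n$) and all $a_i = 1$ (giving $G = K_{1,n-1}$) yield $s^-(G) = n - 1$ by direct eigenvalue computation. In the remaining case, where $k \geq 2$ and some $a_i \geq 2$, we still need the strict inequality $s^-(G) > n - 1$. Here the main obstacle arises: every induced $P_3$ in $G$ must have $v$ as its middle vertex, so \cref{thm:main-2} could return $u = v$, but then $G \setminus \{v\} = H$ has no dominating vertex and the induction stalls.

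To handle this remaining case, I would analyze $A_G$ directly via the equitable partition $\{v\} \sqcup V(K_{a_1}) \sqcup \cdots \sqcup V(K_{a_k})$. The spectrum of $A_G$ consists of the eigenvalues of the $(k+1) \times (k+1)$ quotient matrix $Q$ together with $-1$ of multiplicity $n - 1 - k$ (from eigenvectors supported on a single clique and summing to zero there), so it suffices to produce a negative eigenvalue $\mu^-$ of $Q$ with $(\mu^-)^2 > k$. The symmetrized quotient equals $D + E$ with $D = \operatorname{diag}(0, a_1 - 1, \ldots, a_k - 1)$ positive semidefinite and $E$ of rank $2$ with spectrum $\{\sqrt{n-1}, -\sqrt{n-1}\} \cup \{0\}^{k-1}$, so by a rank-one downdate argument $Q$ has at most one negative eigenvalue. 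The eigenvector equation reduces to $\mu = \sum_j a_j/(\mu - a_j + 1)$ whenever the $v$-coordinate is nonzero; thus $\mu^-$ is the unique zero in $(-\infty, \min_j(a_j) - 1)$ of the strictly decreasing function $f(\mu) = \sum_j a_j/(\mu - a_j + 1) - \mu$. The bound $\mu^- < -\sqrt{k}$ is equivalent to $f(-\sqrt{k}) < 0$, i.e., to $\sum_j a_j/(a_j - 1 + \sqrt{k}) > \sqrt{k}$; each summand is $\geq 1/\sqrt{k}$, with strict inequality whenever $a_j \geq 2$ (since $\sqrt{k} > 1$ for $k \geq 2$), so the sum strictly exceeds $\sqrt{k}$. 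This gives $(\mu^-)^2 > k$, hence $s^-(G) = (n - 1 - k) + (\mu^-)^2 > n - 1$, closing the induction.
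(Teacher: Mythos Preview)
Your proof is correct and follows the same inductive skeleton as the paper: reduce via \cref{thm:main-2} when $G\setminus\{v\}$ contains an induced $P_3$, and handle the case where $G\setminus\{v\}$ is a disjoint union of cliques separately.

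The genuine difference is in how you treat that endgame case. The paper (in \cref{lem:universal-endgame}) argues via interlacing: it observes the $(-1)$-eigenspace of dimension $n-1-k$ exactly as you do, then bounds the least eigenvalue using the induced subgraph $K_{1,k}$ to get $\lambda_n\le-\sqrt{k}$, and upgrades to strict inequality by exhibiting an induced $U_{k+2,3}$ and invoking a separate computation (\cref{lem:spectrum-Un3}) of its least eigenvalue. You instead analyse the $(k{+}1)\times(k{+}1)$ quotient matrix of the equitable partition directly: you identify its unique negative eigenvalue as the root of $f(\mu)=\sum_j a_j/(\mu-a_j+1)-\mu$ on $(-\infty,0)$ and check $f(-\sqrt{k})<0$ by the elementary termwise bound $a_j/(a_j-1+\sqrt{k})\ge 1/\sqrt{k}$, strict when $a_j\ge 2$ and $k\ge 2$. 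Your route is a bit more computational but entirely self-contained, avoiding the auxiliary lemma on $U_{n,3}$; the paper's route is shorter once that lemma is in hand and makes the source of the bound (an explicit induced subgraph) more transparent. Both yield the same strict inequality $(\mu^-)^2>k$ and hence $s^-(G)>n-1$ in the non-extremal case.
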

Combining this result with our first tool, we complete the proof of \cref{thm:main-result-1}.

As another application, we resolve \cref{conjecture:EFGW} for a large class of unicyclic graphs with a short cycle, including the graphs $H_n^k$ in \cite[Section 3.3]{Abiad23} with $k \geq 5$.
\begin{corollary}
\label{cor:main-2-3}
    Let $G$ be a non-bipartite unicyclic graph with $n$ vertices, such that the cycle in $G$ has length at least $5$ and contains three consecutive vertices with degree $2$. Then we have $\min(s^+(G), s^-(G)) > n - 1$.
    
    In particular, let $T$ be any tree, and let $C_k$ be an odd cycle with $k \geq 5$. Let $G$ be a unicyclic graph obtained by identifying any vertex of $T$ with a vertex of $C_k$. Then $\min(s^+(G), s^-(G)) > n - 1$.
\end{corollary}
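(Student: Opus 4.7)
The plan is to apply \cref{thm:main-2} to a carefully chosen induced $P_3$ and then use the fact that trees on $n - 1$ vertices have $s^{\pm} = n - 2$.

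First, I would select three consecutive degree-$2$ vertices $v_1, v_2, v_3$ on the cycle. They induce a $P_3$ in $G$: the cycle has length at least $5$, so $v_1$ and $v_3$ are not cycle-adjacent, and since each $v_i$ has both of its two edges on the cycle, no chord exists between them either. Applying \cref{thm:main-2} separately for $s^+$ and $s^-$ then yields vertices $u^+, u^- \in \{v_1, v_2, v_3\}$ with
$$s^+(G) > s^+(G \backslash \{u^+\}) + 1 \quad \text{and} \quad s^-(G) > s^-(G \backslash \{u^-\}) + 1.$$

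The central observation is that $G \backslash \{u\}$ is a tree on $n - 1$ vertices for each $u \in \{v_1, v_2, v_3\}$. Indeed, $G$ is connected with $n$ edges, and removing a degree-$2$ cycle vertex $u$ destroys the unique cycle without disconnecting the graph: any pendant trees hanging off the other cycle vertices remain attached via the rest of the cycle, which is now a path. Hence $G \backslash \{u\}$ is a connected acyclic graph with $n - 2$ edges, i.e.\ a tree. Being bipartite, it satisfies $s^+(G \backslash \{u\}) = s^-(G \backslash \{u\}) = n - 2$. Combining with the two inequalities above yields $\min(s^+(G), s^-(G)) > (n - 2) + 1 = n - 1$.

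For the ``in particular'' clause, let $v$ be the vertex of $T$ identified with a vertex of $C_k$. Every cycle vertex other than $v$ has degree exactly $2$ in $G$, giving a path of $k - 1 \geq 4$ consecutive degree-$2$ vertices along the cycle; any three in a row satisfy the hypothesis of the main assertion. Since $k$ is odd, $G$ is non-bipartite, and the first part applies. The entire argument is essentially a direct consequence of \cref{thm:main-2}; the only potential snag is confirming that $G \backslash \{u\}$ is a tree rather than a disconnected forest, which requires the degree-$2$ condition, while the length-$\geq 5$ hypothesis is used only to guarantee that the three chosen vertices induce a $P_3$ rather than a triangle.
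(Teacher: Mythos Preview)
Your proof is correct and follows essentially the same approach as the paper: apply \cref{thm:main-2} to the induced $P_3$ on the three consecutive degree-$2$ cycle vertices, observe that removing any such vertex leaves a tree on $n-1$ vertices with $s^{\pm}=n-2$, and conclude. Your write-up simply supplies more detail on why $G[U]\cong P_3$, why $G\backslash\{u\}$ is a tree, and why the ``in particular'' clause satisfies the hypotheses.
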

\subsection{Results on \cref{problem:edges}}
We reveal a curious relation between $s^+(G)$ and the number of edges $m$.
\begin{theorem}
    \label{thm:main-result-2}
    Let $G$ be a graph with $m$ edges. Then we have
    $$s^+(G) = \Omega\left(\frac{m^{6/7}}{(\log\log m)^{8/3}}\right) \text{ and }s^-(G) = \Omega(m^{1/2}).$$
    In addition, both exponents are tight. There exist graphs $G$ with $s^+(G) = O(m^{6/7})$, and graphs $G$ with $s^-(G) = O(m^{1/2})$.
\end{theorem}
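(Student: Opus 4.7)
The plan is to handle the lower bounds on $s^-(G)$ and $s^+(G)$ separately, then to describe tight constructions.

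For $s^-(G) = \Omega(\sqrt{m})$, I would first reduce to graphs with no isolated vertex, since isolated vertices affect neither $m$ nor $s^{\pm}(G)$. Corollary~\ref{cor:main-1-1} then gives $s^-(G) \geq n/2$. For a complementary bound, the trace-zero identity gives $\sum_{\lambda_i<0}|\lambda_i|=\sum_{\lambda_i>0}\lambda_i$; the elementary inequality $(\sum a_i)^2 \geq \sum a_i^2$ for nonnegative $a_i$ yields $\sum_{\lambda_i<0}|\lambda_i| \geq \sqrt{s^+(G)}$; and Cauchy--Schwarz gives $s^-(G) \geq s^+(G)/n^-$. Using $s^+(G)+s^-(G)=2m$ this rearranges to $s^-(G) \geq 2m/(n+1)$. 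Combining both bounds by AM--GM yields $s^-(G) \geq \sqrt{mn/(n+1)} \geq \sqrt{m/2}$.

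For $s^+(G) = \Omega(m^{6/7}/(\log\log m)^{8/3})$, I would split into cases based on $\Delta(G)$ and $n$. If $\Delta(G) \geq m^{6/7}$, then $K_{1,\Delta} \subseteq G$ forces $\lambda_1(G) \geq \sqrt{\Delta}$, so $s^+(G) \geq \lambda_1^2 \geq m^{6/7}$. If $n \leq 2m^{4/7}$, then $\lambda_1 \geq 2m/n$ gives $s^+(G) \geq 4m^2/n^2 \geq m^{6/7}$. The hard case is $\Delta < m^{6/7}$ together with $n > 2m^{4/7}$, which forces $G$ into a genuinely sparse regime with average degree below $m^{3/7}$. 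Here the idea is to iteratively apply K\H{o}v\'{a}ri--S\'{o}s--Tur\'{a}n to extract many vertex-disjoint copies of $K_{s,t}$ with well-chosen $s,t$, collecting $s^+(K_{s,t}) = st$ per copy via superadditivity (Theorem~\ref{thm:main-1}). Optimizing $(s,t)$ against the edge density and controlling the iteration depth should produce the exponent $6/7$, with the polylog loss $(\log\log m)^{8/3}$ coming from the dyadic nature of the iteration.

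For tightness, $G = K_n$ gives $s^-(K_n) = n - 1 = \Theta(\sqrt{m})$, showing $1/2$ is optimal for $s^-$. A matching example for $s^+$ should live on roughly $m^{4/7}$ vertices with maximum degree $\approx m^{3/7}$, whose spectrum has a single large positive eigenvalue of order $m^{3/7}$ and whose remaining mass is concentrated on the negative side, so that $s^+(G) \approx \lambda_1^2 \approx m^{6/7}$; a suitably tuned Cayley or strongly regular graph is a natural candidate.

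The main obstacle is the hard case of the $s^+$ lower bound. Iteratively extracting dense bipartite substructures while preserving enough remaining edges to continue is delicate, and the precise way K\H{o}v\'{a}ri--S\'{o}s--Tur\'{a}n interacts with superadditivity along a dyadic range of parameters $(s,t)$ is what fixes the exponent at $6/7$ and introduces the polylogarithmic correction.
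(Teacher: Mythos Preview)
Your treatment of $s^-(G)$ and the two easy cases for $s^+(G)$ is fine (though for $s^-$ the paper simply notes $n\ge\sqrt{2m}$ and applies $s^-(G)\ge n/2$; your second inequality is correct but unnecessary). The tightness constructions are also on target: the paper uses $K_n$ for $s^-$ and the point graph of a generalized quadrangle $GQ(q,q^2)$ for $s^+$, which is exactly the kind of strongly regular graph you allude to.

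The genuine gap is your ``hard case'' for $s^+$. Extracting vertex-disjoint copies of $K_{s,t}$ and summing $s^+(K_{s,t})=st$ via super-additivity cannot reach the exponent $6/7$. Each copy contributes $st$ to the sum but occupies $s+t$ vertices, so a vertex-disjoint family contributes at most $\sum st \le \max_i\frac{s_it_i}{s_i+t_i}\cdot n = O(n)$; in the critical regime $n\asymp m^{4/7}$ this gives only $s^+(G)=\Omega(m^{4/7})$. One can test this against the extremal example itself: the $GQ(q,q^2)$ graph has $n\asymp q^4$, $m\asymp q^7$, and any vertex-disjoint $K_{s,t}$ packing yields at most $O(q^4)$, far below the true value $s^+(G)\asymp q^6$. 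K\H{o}v\'ari--S\'os--Tur\'an guarantees existence of bipartite subgraphs but gives no leverage beyond this counting obstruction, and no choice of $(s,t)$ or dyadic iteration changes the $O(n)$ ceiling.

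The paper's route in this regime is entirely different. It buckets vertices by degree into $k=O(\log\log m)$ doubly-exponential classes $V_0,\dots,V_{k-1}$. If some bucket $V_i$ carries $\Omega(m/k^2)$ internal edges, then Theorem~\ref{thm:main-3} (the triangle-counting bound $s^+\ge m^{4/3}/(n^{1/3}\lambda_1^{2/3})$) applied to $G[V_i]$ gives $\Omega(m^{6/7}/k^{8/3})$, because within a bucket $\lambda_1$ is controlled by the maximum degree and $|V_i|$ by the minimum degree. Otherwise most edges go between two buckets $V_i,V_j$, so $G[V_i\cup V_j]$ has surplus $\Omega(m/k^2)$, and Theorem~\ref{thm:main-4} ($s^+\ge \surp^2/m$) finishes. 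Both the triangle-counting inequality and the surplus inequality are essential and have no analogue in your sketch; the $(\log\log m)^{8/3}$ loss comes from the number of buckets, not from any K\H{o}v\'ari--S\'os--Tur\'an iteration.
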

This result shows ``asymmetry" between $s^+(G)$ and $s^-(G)$, a theme introduced by Elphick-Linz in \cite{EL24}. Let us mention a matrix theoretic formulation of this theorem, which follows from a connection with semi-definite programming,  \cref{lem:energy-sdp}.
\begin{corollary}
    Let $A$ be a symmetric $\{0, 1\}$-matrix with zero diagonal, such that $m$ off-diagonal entries are non-zero. Let $M$ be any positive semi-definite matrix with the same dimension as $A$. Then we have
    $$\norm{A + M}_F = \Omega\left(\frac{m^{3/7}}{(\log\log m)^{4/3}}\right) \text{ and }\norm{A - M}_F = \Omega(m^{1/4}).$$
    where $\norm{\cdot}_F$ is the Frobenius norm.    
\end{corollary}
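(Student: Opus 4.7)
The plan is to deduce this corollary directly from \cref{thm:main-result-2} by invoking the semidefinite programming characterization promised in \cref{lem:energy-sdp}. I expect that lemma to give, for any real symmetric matrix $A$ and any positive semi-definite $M$ of matching size, the bounds $\norm{A + M}_F^2 \geq s^+(A)$ and $\norm{A - M}_F^2 \geq s^-(A)$, where $s^{\pm}(A)$ denote the sums of squared positive/negative eigenvalues of $A$.

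To verify this inequality, I would use the Jordan decomposition $A = A^+ - A^-$ into its positive and negative parts, so that $A^{\pm} \succeq 0$ and $A^+ A^- = 0$. Expanding $\norm{A + M}_F^2 = \operatorname{tr}(A^2) + 2\operatorname{tr}(AM) + \operatorname{tr}(M^2)$, using $\operatorname{tr}(A^2) = s^+(A) + s^-(A)$, and completing the square in $A^-$ and $M$, produces the identity
$$\norm{A + M}_F^2 = s^+(A) + 2\operatorname{tr}(A^+ M) + \norm{A^- - M}_F^2.$$
Because $A^+$ and $M$ are both positive semi-definite, the cross term $\operatorname{tr}(A^+ M)$ is non-negative, so the entire right hand side is at least $s^+(A)$. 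The analogous expansion (flipping the sign of $M$) delivers $\norm{A - M}_F^2 \geq s^-(A)$, and both bounds are in fact tight, as one can see by choosing $M = A^{\mp}$.

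With these inequalities in hand, the corollary becomes immediate. Interpreting $A$ as the adjacency matrix of the graph $G$ encoded by its non-zero off-diagonal entries, \cref{thm:main-result-2} supplies the lower bounds $s^+(G) = \Omega(m^{6/7}/(\log\log m)^{8/3})$ and $s^-(G) = \Omega(m^{1/2})$; taking square roots then yields the claimed Frobenius norm bounds. The genuine difficulty lies entirely in \cref{thm:main-result-2} itself, so this corollary is essentially a repackaging of those bounds in matrix-theoretic language via the SDP lemma; the only conceptual step worth flagging is the completion of the square above, which is the heart of why PSD perturbations cannot decrease the Frobenius norm below $\sqrt{s^+(A)}$.
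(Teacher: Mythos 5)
Your proposal is correct and matches the paper's route exactly: the paper derives this corollary by combining \cref{lem:energy-sdp} (whose proof is the same completion-of-the-square argument with $A = A^+ - A^-$, $\langle A^+, A^-\rangle = 0$, and $\langle A^{\pm}, M\rangle \geq 0$ that you give) with \cref{thm:main-result-2} and taking square roots. The only cosmetic point is that $m$ nonzero off-diagonal entries of a symmetric matrix correspond to $m/2$ edges of the graph, but this factor of $2$ is absorbed by the $\Omega$ notation.
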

Since our methods work well with weighted graphs, we can relax our assumptions on $A$. For example, we can assume that the nonzero entries of $A$ are in $[0.5, 2]$, and even remove the symmetry condition on $A$ using $\norm{A + M}_F \geq \norm{\frac{1}{2}(A + A^T) + M}_F$.

We first show how to construct graphs $G$ with $s^+(G) = O(m^{6/7})$. This construction was found by Elphick and Linz using generalized quadrangles. A construction with slightly different parameters is featured in \cite{RST23}.
\begin{theorem}[\cite{EL24}, Theorem 4]
    \label{thm:quadrangle-construction}
    For any prime power $q$, there exists a graph $G$ on $(q + 1)(q^3 + 1)$ vertices with spectrum $k^1r^fa^g$, where
    $$k = q(q^2 + 1), r = q - 1, a = -q^2-1, f = q^2(q^2 + 1), g = q(q^2 - q + 1).$$
    In particular, this graph satisfies $s^+(G) \sim 2q^6$, $s^-(G) \sim q^7$, and $m \sim q^7$.
\end{theorem}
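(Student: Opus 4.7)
The plan is to take $G$ to be the point graph of a generalized quadrangle $GQ(q, q^2)$. Such quadrangles exist for every prime power $q$ via classical constructions, for example the elliptic quadric $Q^{-}(5, q)$ or the Hermitian variety $H(3, q^2)$. Recall that a generalized quadrangle $GQ(s, t)$ has $(s+1)(st+1)$ points, each line contains $s + 1$ points, each point lies on $t + 1$ lines, and any point off a line $\ell$ has a unique line through it meeting $\ell$. The point graph has the points as vertices, with distinct points adjacent iff they are collinear.

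A short double counting using the generalized quadrangle axiom shows that the point graph is strongly regular with parameters
$$(v, k, \lambda, \mu) = ((s+1)(st+1),\, s(t+1),\, s-1,\, t+1).$$
Specializing to $(s, t) = (q, q^2)$ recovers the claimed values $v = (q+1)(q^3+1)$ and $k = q(q^2+1)$. From the strongly regular identity $A^2 = kI + \lambda A + \mu(J - I - A)$, the two nontrivial eigenvalues are the roots of $x^2 - (\lambda - \mu) x - (k - \mu) = 0$. Substituting $\lambda - \mu = -(q^2 - q + 2)$ and $k - \mu = (q - 1)(q^2 + 1)$, the roots are exactly $r = q - 1$ and $a = -(q^2 + 1)$. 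The multiplicities $f, g$ are then forced by $1 + f + g = v$ and $k + f r + g a = \operatorname{tr}(A) = 0$; solving this linear system yields $f = q^2(q^2+1)$ and $g = q(q^2 - q + 1)$.

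With the spectrum in hand, the remaining claims reduce to direct computation. Since $G$ is $k$-regular, $2m = \operatorname{tr}(A^2) = vk = q(q+1)(q^2+1)(q^3+1) \sim q^7$. Factoring the energy expressions,
$$s^+(G) = k^2 + f r^2 = 2 q^2 (q^2+1)(q^2-q+1) \sim 2 q^6, \qquad s^-(G) = g a^2 = q(q^2-q+1)(q^2+1)^2 \sim q^7,$$
giving the claimed asymptotics. The only conceptual ingredient is invoking the existence of $GQ(q, q^2)$ for all prime powers $q$; once this is granted, everything else is standard strongly regular graph algebra, so there is no serious obstacle to overcome.
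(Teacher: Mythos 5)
Your proposal is correct and is essentially the paper's own route: the paper simply cites Elphick--Linz, whose construction is exactly the collinearity graph of a generalized quadrangle $GQ(q,q^2)$, and your strongly-regular-graph computation of the eigenvalues, multiplicities, and energies matches the stated spectrum (note only that $2m = vk \sim q^7$, so the paper's ``$m \sim q^7$'' is to be read up to constants, which is all the application needs). One tiny slip worth fixing: $H(3,q^2)$ is a $GQ(q^2,q)$, not a $GQ(q,q^2)$ (its point--line dual is), but the elliptic quadric $Q^{-}(5,q)$ you also invoke does give existence for every prime power $q$, so the argument stands.
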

A recurring theme in this part of the discussion is that this graph has the ``minimum" $s^+(G)$ in some senses.

We introduce the ``triangle-counting" method to study \cref{problem:edges}. The third moment of the adjacency eigenvalues 
$$\sum_{i = 1}^n \lambda_i^3$$
is six times the number of triangles in the graph $G$. This fact can be exploited to prove non-trivial propositions about the eigenvalues. For example, in \cite{JTYZZ}, Jiang et. al. used this method to prove a bound on the eigenvalue multiplicity of signed graphs. In more recent works \cite{KP24, LNW21}, Lin-Ning-Wu and Kumar-Pragada used triangle-counting to prove special cases of the Bollob\'{a}s-Nikiforov conjecture.

We prove the next theorem with the triangle-counting method. We should mention that R\"{a}ty, Sudakov and Tomon has essentially completed the arguments for this theorem in their beautiful work on discrepancy \cite{RST23}. Their proof involves the positive and negative square energies, which they denote by
$$s^+(G) := \sum_{i = 1}^K \lambda_i^2 \text{ and }s^-(G) := \sum_{i = K+1}^n \lambda_i^2$$
though they didn't refer to these quantities by name, and I couldn't find the theorem explicitly stated in their paper. For the sake of completeness, we extract the relevant arguments in Section 5 of \cite{RST23} to form a short and self-contained proof of this result.
\begin{theorem}
\label{thm:main-3}
For any graph $G$ with $n$ vertices and $m$ edges, we have $$s^+(G) \geq \frac{m^{4/3}}{n^{1/3} \lambda_1^{2/3}}.$$    
\end{theorem}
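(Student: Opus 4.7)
The plan is to reduce the claim to a single auxiliary inequality $(s^-(G))^2 \leq \lambda_1 \sqrt{n}\,(s^+(G))^{3/2}$, from which the target bound drops out by a case split on whether $s^+(G)$ or $s^-(G)$ is the larger half of $s^+(G) + s^-(G) = 2m$.

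To establish the auxiliary inequality I would apply Cauchy-Schwarz to the absolute values of the negative eigenvalues via the splitting $|\lambda_i|^2 = |\lambda_i|^{1/2}\cdot|\lambda_i|^{3/2}$, obtaining
\[(s^-(G))^2 = \Bigl(\sum_{i \in N} |\lambda_i|^2\Bigr)^2 \leq \Bigl(\sum_{i \in N} |\lambda_i|\Bigr)\Bigl(\sum_{i \in N} |\lambda_i|^3\Bigr),\]
where $P = \{i : \lambda_i \geq 0\}$ and $N = \{i : \lambda_i < 0\}$. The first factor equals $\sum_{i \in P} \lambda_i$ (since $\sum_i \lambda_i = 0$), and a second Cauchy-Schwarz bounds this by $\sqrt{|P|\,s^+(G)} \leq \sqrt{n\,s^+(G)}$. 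For the second factor, the triangle-counting identity $\sum_i \lambda_i^3 = 6\,t(G)$, with $t(G) \geq 0$ the triangle count, rewrites
\[\sum_{i \in N} |\lambda_i|^3 = \sum_{i \in P} \lambda_i^3 - 6\,t(G) \leq \sum_{i \in P} \lambda_i^3 \leq \lambda_1\,s^+(G),\]
using $\lambda_i \leq \lambda_1$ for $i \in P$ in the last step. Multiplying the two factor bounds yields the auxiliary inequality.

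To conclude, I would note that $n\lambda_1^2 \geq \sum_i \lambda_i^2 = 2m$ by Perron-Frobenius, so $n\lambda_1^2 \geq m$, which is exactly the condition $m \geq m^{4/3}/(n^{1/3}\lambda_1^{2/3})$. Hence if $s^+(G) \geq m$ the claim is immediate. Otherwise $s^-(G) > m$, and substituting into the auxiliary inequality gives $(s^+(G))^{3/2} \geq m^2/(\lambda_1 \sqrt{n})$, which rearranges to the desired bound after raising to the $2/3$ power.

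The crux — and the point at which triangle-counting actually contributes — is the bound $\sum_{i \in N} |\lambda_i|^3 \leq \lambda_1\,s^+(G)$. The naive alternative $\sum_{i \in N} |\lambda_i|^3 \leq \lambda_1\,s^-(G)$ would yield only $s^+(G) \gtrsim m^{3/2}/(\lambda_1 \sqrt{n})$, which falls short of the claim by a factor of $(n\lambda_1^2/m)^{1/6}$ in the regime where $n\lambda_1^2 \gg m$. The non-negativity of $t(G)$ lets us transfer the cubic sum from the negative side onto the positive side, where it is naturally controlled by $s^+(G)$; this is precisely where the triangle-counting identity enters and provides the sharper exponent.
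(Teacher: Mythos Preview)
Your proof is correct and uses essentially the same approach as the paper: the same case split on $s^+(G) \gtrless m$, the same Cauchy--Schwarz on the negative eigenvalues (the splitting $|\lambda_i|^2 = |\lambda_i|^{1/2}|\lambda_i|^{3/2}$ is exactly the paper's inequality $\sum_N(-\lambda_i)^3 \geq (\sum_N(-\lambda_i)^2)^2/\sum_N(-\lambda_i)$ read backwards), the same triangle-counting transfer $\sum_N |\lambda_i|^3 \leq \sum_P \lambda_i^3 \leq \lambda_1\, s^+(G)$, and the same Cauchy--Schwarz bound $\sum_P \lambda_i \leq \sqrt{n\,s^+(G)}$. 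The only difference is packaging: the paper introduces the energy $\mathcal{E}$ as an auxiliary parameter, obtains the two bounds $s^+(G) \geq \mathcal{E}^2/(4n)$ and $s^+(G) \geq 2m^2/(\mathcal{E}\lambda_1)$, and eliminates $\mathcal{E}$ via a weighted geometric mean, whereas you chain the same inequalities directly into the single unconditional relation $(s^-(G))^2 \leq \lambda_1\sqrt{n}\,(s^+(G))^{3/2}$ and only insert $s^-(G) > m$ at the end --- a slightly tidier arrangement (and your use of $n\lambda_1^2 \geq \sum_i \lambda_i^2 = 2m$ for the easy case is cleaner than the paper's route through $\lambda_1 \geq 1$ and $\lambda_1 \geq 2m/n$), but the mathematical content is identical.
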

We mention some corollaries. First, we resolve a question of Elphick and Linz on the ratio between $s^+(G)$ and $s^-(G)$. If $G$ is an $n$-vertex graph, then the ratio $\frac{s^+(G)}{s^-(G)}$ can be as large as $(n - 1)$, as evidenced by the complete graph $K_n$. Elphick and Linz asked whether the reciprocal $\frac{s^-(G)}{s^+(G)}$ can be equally as large. They show \cref{thm:quadrangle-construction} as an example with $\frac{s^-(G)}{s^+(G)} = \Omega(n^{1/4})$. We show that this is tight up to a constant.
\begin{corollary}[\cite{EL24}, Question 1]
\label{cor:main-3-1}
    For any graph $G$ on $n$ vertices, we have $$\frac{s^-(G)}{s^+(G)} \leq 2n^{1/4}.$$
\end{corollary}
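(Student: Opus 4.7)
The plan is to eliminate the dependence on $\lambda_1$ in \cref{thm:main-3} by using the trivial bound $\lambda_1^2 \leq s^+(G)$, which follows because $\lambda_1^2$ is one of the terms in the sum defining $s^+(G)$. This should convert \cref{thm:main-3} into a clean lower bound on $s^+(G)$ in terms of $n$ and $m$ alone, from which the claim follows by pairing it with the identity $s^+(G) + s^-(G) = 2m$.

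Concretely, I would first assume that $G$ has at least one edge (otherwise both sides of the target inequality vanish). Then I would apply \cref{thm:main-3} to write
$$s^+(G) \geq \frac{m^{4/3}}{n^{1/3} \lambda_1^{2/3}} \geq \frac{m^{4/3}}{n^{1/3} (s^+(G))^{1/3}},$$
where the second inequality uses $\lambda_1^2 \leq s^+(G)$. Rearranging gives $s^+(G)^{4/3} \geq m^{4/3}/n^{1/3}$, i.e.,
$$s^+(G) \geq \frac{m}{n^{1/4}}.$$

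To conclude, I would use $s^-(G) \leq s^+(G) + s^-(G) = 2m$ to obtain
$$\frac{s^-(G)}{s^+(G)} \leq \frac{2m}{s^+(G)} \leq \frac{2m \cdot n^{1/4}}{m} = 2n^{1/4},$$
which is the desired bound. There is no real obstacle here: the entire argument is a one-line manipulation of \cref{thm:main-3}. The only thing to check conceptually is that the exponent $1/4$ in the final answer matches the exponent implicitly arising from solving the self-referential inequality $s^+ \geq m^{4/3}/(n^{1/3} (s^+)^{1/3})$, which it does exactly.
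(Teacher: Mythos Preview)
Your proof is correct and is essentially the same as the paper's: both combine \cref{thm:main-3} with the bound $s^+(G) \geq \lambda_1^2$ to eliminate $\lambda_1$ and obtain $s^+(G) \geq m/n^{1/4}$, then finish with $s^-(G) \leq 2m$. The only cosmetic difference is that the paper phrases the elimination step as a weighted geometric mean $(s^+)^{3/4}(s^+)^{1/4} \geq \bigl(m^{4/3}/(n^{1/3}\lambda_1^{2/3})\bigr)^{3/4}(\lambda_1^2)^{1/4}$, while you substitute $\lambda_1^{2/3} \leq (s^+)^{1/3}$ directly and solve the resulting self-referential inequality; these are algebraically identical manipulations.
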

Second, we consider regular graphs. The original proof of \cref{conjecture:EFGW} for regular graphs, given in \cite[Theorem 8]{EFGW16}, shows the bound $s^+(G) \geq n$ for any degree $k \geq 3$. We now improve this bound.
\begin{corollary}
\label{cor:main-3-1-regular}  
If $G$ is $k$-regular on $n$ vertices, then 
$$s^+(G) \geq (k/4)^{2/3} n.$$
\end{corollary}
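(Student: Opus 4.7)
The plan is to apply Theorem \ref{thm:main-3} directly, since all the quantities appearing in the bound take especially simple forms for a $k$-regular graph. Two standard facts will be used: first, a $k$-regular graph on $n$ vertices has exactly $m = nk/2$ edges; second, the spectral radius of a $k$-regular graph equals $k$, i.e.\ $\lambda_1 = k$ (the all-ones vector is an eigenvector of eigenvalue $k$, and $\lambda_1 \leq \Delta(G) = k$ by Perron--Frobenius).

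Plugging these into the inequality of Theorem \ref{thm:main-3}, I would compute
$$s^+(G) \;\geq\; \frac{m^{4/3}}{n^{1/3}\lambda_1^{2/3}} \;=\; \frac{(nk/2)^{4/3}}{n^{1/3}\,k^{2/3}} \;=\; \frac{n^{4/3}k^{4/3}}{2^{4/3}\,n^{1/3}\,k^{2/3}} \;=\; \frac{nk^{2/3}}{2^{4/3}} \;=\; (k/4)^{2/3}\, n,$$
which is exactly the claimed bound. There is no real obstacle here beyond arithmetic; the whole content of the corollary is contained in Theorem \ref{thm:main-3}, and the main step is just recognizing that for regular graphs $\lambda_1$ is determined by $k$, so the right-hand side of Theorem \ref{thm:main-3} simplifies to a clean expression in $n$ and $k$ alone.
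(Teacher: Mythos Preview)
Your proposal is correct and matches the paper's own proof essentially verbatim: the paper also just substitutes $\lambda_1 = k$ and $m = kn/2$ into Theorem~\ref{thm:main-3} and simplifies to $(k/4)^{2/3}n$.
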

In contrast, no such bound is possible for $s^-(G)$, as evidenced by a disjoint union of cliques. In the regime $k \leq n^{3/4}$, this bound is tight up to a constant: for each prime $q$, we take $k = q(q^2 + 1)$, and take a disjoint union of the graphs in \cref{thm:quadrangle-construction}. In the regime $k > n^{3/4}$, the bound $s^+(G) \geq \lambda_1^2 = k^2$ is tighter. 

We can trivially bound $s^+(G) \leq k^2 + \lambda_2^2 n$. Thus, when $k = o(n^{3/4})$, \cref{cor:main-3-1-regular} gives $\lambda_2 = \Omega(k^{1/3})$. This recovers the \emph{dense Alon–Boppana bound} obtained by Balla in \cite{balla2024} using sophicated matrix projection techniques. We refer to Balla's paper for a nice overview on the history of this bound. The idea of using triangle-counting to prove the dense Alon-Boppana bound has already been realized by R\"{a}ty-Sudakov-Tomon \cite{RST23}, who proved the stronger bound $\lambda_2 = \Omega(n / k)$ when $k \in [n^{2/3}, n^{3/4}]$ and $\lambda_2 = \Omega(k^{1/2})$ when $k \leq n^{2/3}$. 

The Alon-Boppana bound is only stated for regular graphs in \cite{RST23}. For the sake of completeness, we record an Alon–Boppana type bound for non-regular graphs as well.
\begin{corollary}
\label{cor:alon-boppana-irregular}
For any graph $G$ on $n$ vertices with $m$ edges, if $\lambda_1 \leq m^{1/2} (2n)^{-1/8}$, then we have 
$$\lambda_2 = \Omega\left(\frac{m^{2/3}}{n^{2/3} \lambda_1^{1/3}}\right).$$    
\end{corollary}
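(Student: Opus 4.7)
The corollary is an immediate consequence of \cref{thm:main-3}, and the approach mirrors the regular case. The idea is to combine the lower bound on $s^+(G)$ from triangle-counting with the trivial upper bound $s^+(G) \leq \lambda_1^2 + (n-1)\lambda_2^2$, and argue that under the hypothesis $\lambda_1 \leq m^{1/2}(2n)^{-1/8}$ the contribution of $\lambda_1^2$ is only a constant fraction of the right-hand side of \cref{thm:main-3}, so the remaining mass must be absorbed by $\lambda_2^2$ times $n-1$.

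\textbf{Step 1: upper bound on $s^+(G)$.} First I would observe that among the non-negative eigenvalues of $G$, only $\lambda_1$ may exceed $\lambda_2$ in absolute value (indeed, if $\lambda_2 < 0$ then $s^+(G) = \lambda_1^2 \leq m (2n)^{-1/4}$, which is easily checked to contradict \cref{thm:main-3} under the hypothesis, so in fact $\lambda_2 \geq 0$). Hence
\[
s^+(G) \leq \lambda_1^2 + (n-1)\lambda_2^2.
\]

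\textbf{Step 2: isolating $\lambda_1^2$.} Raising the hypothesis $\lambda_1 \leq m^{1/2}(2n)^{-1/8}$ to the $8/3$ power yields $\lambda_1^{8/3} \leq m^{4/3}/(2^{1/3}n^{1/3})$, equivalently
\[
\lambda_1^2 \leq \frac{1}{2^{1/3}}\cdot \frac{m^{4/3}}{n^{1/3}\lambda_1^{2/3}}.
\]
So $\lambda_1^2$ absorbs only a $2^{-1/3}$ fraction of the lower bound from \cref{thm:main-3}.

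\textbf{Step 3: conclusion.} Combining with \cref{thm:main-3} gives
\[
(n-1)\lambda_2^2 \geq s^+(G) - \lambda_1^2 \geq \bigl(1 - 2^{-1/3}\bigr)\cdot \frac{m^{4/3}}{n^{1/3}\lambda_1^{2/3}},
\]
whence $\lambda_2 = \Omega\bigl(m^{2/3}/(n^{2/3}\lambda_1^{1/3})\bigr)$, as claimed.

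\textbf{Expected difficulty.} There is essentially no obstacle: once \cref{thm:main-3} is in hand the argument is bookkeeping. The only minor care point is verifying that the constant $(2n)^{-1/8}$ in the hypothesis is tight enough to make the subtraction $s^+(G) - \lambda_1^2$ positive by a constant factor, which the calculation above shows it is.
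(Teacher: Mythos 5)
Your proof is correct and follows essentially the same route as the paper: combine \cref{thm:main-3} with the trivial bound $s^+(G) \leq \lambda_1^2 + n\lambda_2^2$ and use the hypothesis $\lambda_1 \leq m^{1/2}(2n)^{-1/8}$ to show $\lambda_1^2$ eats only a constant fraction ($2^{-1/3}$) of the lower bound. Your extra check that $\lambda_2 \geq 0$ (so that $\lambda_2^2 = \Omega(\cdot)$ really gives $\lambda_2 = \Omega(\cdot)$) is a small point the paper leaves implicit, and it is a welcome addition.
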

We can check that \cref{thm:main-3} establishes \cref{thm:main-result-2} for regular and almost-regular graphs. Unfortunately, it does not imply \cref{thm:main-result-2} for highly irregular graphs, where $\lambda_1$ can be much greater than the average degree bound $\frac{2m}{n}$. To complete the proof of \cref{thm:main-result-2}, we use an additional result that relates the square energies to the surplus of a graph.

The \emph{maxcut} of a graph $G$, denoted by $\mc(G)$, is the size of the largest bipartite subgraph in $G$. In the past 50 years, this quantity has been extensively studied both in Theoretical Computer Science and in Extremal Graph Theory. The \emph{surplus} of a graph $G$ is defined by $\surp(G) = \mc(G) - m / 2$. It is well known that $\mc(G) \geq m / 2$, so the surplus is always non-negative. We refer the reader to a beautiful work by Balla, Janzer and Sudakov \cite{BJS} for an overview of developments concerning $\surp(G)$.

R\"{a}ty, Sudakov and Tomon \cite[Lemma 5.7]{RST23} showed that positive discrepancy, which is closely related to the surplus, is lower-bounded by $\frac{s^+(G) - \lambda_1^2}{\sqrt{\Delta}}$. We show a complementary upper bound of the surplus by the square energies.
\begin{theorem}
    \label{thm:main-4}
    Let $G$ be a graph with $m$ edges. We have
    $$\min(s^+(G), s^-(G)) \geq \frac{\surp(G)^2}{m}.$$
\end{theorem}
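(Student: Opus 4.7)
The plan is to apply Cauchy--Schwarz in the Frobenius inner product, using a sign-twisted version of the adjacency matrix determined by the max cut. Let $x \in \{\pm 1\}^{V(G)}$ be the indicator vector of a maximum cut partition, so that $-x^T A x = 4\surp(G)$ and $\|x\|_2^2 = n$, and define $C := D_x A D_x$ where $D_x = \mathrm{diag}(x)$ and $A = A_G$. Since $D_x$ is orthogonal up to sign, $C$ shares its spectrum with $A$, and in particular $\|C\|_F^2 = \|A\|_F^2 = 2m$.

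The key computation is
\[
\tr(A^\mp C) \;=\; \sum_{i,j} (A^\mp)_{ij}\, A_{ij}\, x_i x_j \;=\; 2\, E_{A^\mp}, \qquad E_{A^\mp} := \sum_{\{i,j\} \in E(G)} (A^\mp)_{ij}\, x_i x_j,
\]
where $A = A^+ - A^-$ is the spectral decomposition into PSD parts ($A^+, A^-$ being the positive and negative spectral pieces, so $\|A^\pm\|_F^2 = s^\pm(G)$). Cauchy--Schwarz in the Frobenius inner product yields $|E_{A^\mp}| \leq \tfrac{1}{2} \|A^\mp\|_F \|C\|_F = \sqrt{m\, s^\mp(G)/2}$. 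Using $A^+ - A^- = A$ on edges together with $\sum_{\{i,j\}\in E(G)} x_i x_j = \tfrac{1}{2} x^T A x = -2\surp(G)$ produces the crucial identity $E_{A^-} - E_{A^+} = 2\surp(G)$. Combined with the triangle inequality, at least one of $|E_{A^-}|, |E_{A^+}|$ exceeds $\surp(G)$, yielding $\max(s^+(G), s^-(G)) \geq 2\surp(G)^2/m$.

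The main obstacle is upgrading this to a bound on the \emph{minimum}, as claimed. When one of the edge sums $|E_{A^\pm}|$ is much smaller than $\surp(G)$ (which does occur, for instance for $K_3$, where $|E_{A^-}|/\surp = 2/3$), the Cauchy--Schwarz argument alone is insufficient to bound the corresponding $s^\mp$. To close the gap, I would exploit the local optimality of the max cut: each vertex satisfies $y_v := -x_v (Ax)_v \in \{0, 1, \ldots, \deg(v)\}$ with $\sum_v y_v = 4\surp(G)$, and a weighted Cauchy--Schwarz using the degrees gives
\[
(4\surp)^2 \;=\; \Big(\sum_v y_v\Big)^2 \;\leq\; 2m \sum_v \frac{y_v^2}{\deg(v)} \;\leq\; 2m \|Ax\|_2^2,
\]
so $\|Ax\|_2^2 \geq 8\surp(G)^2/m$. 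Since $\|Ax\|_2^2 = \|A^+ x\|_2^2 + \|A^- x\|_2^2$ by orthogonality of the positive and negative spectral subspaces, this simultaneously constrains both halves of the spectral decomposition. Combining this constraint with the edge-sum analysis in a symmetric way should close the argument. The hard part I anticipate is converting the pointwise bound $\|A^\pm x\|_2^2 \geq 4\surp^2/m$ into a Frobenius-norm lower bound on the deficient $A^\pm$ without losing the final factor of $n$; this likely requires using a test matrix more refined than $C$ that simultaneously controls the spectral mass of $A^+$ and $A^-$ on the edge set.
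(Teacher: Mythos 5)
Your sign-twist computation is correct as far as it goes, but it only bounds $\max(s^+(G),s^-(G))$, and that quantity is trivially large anyway: since $s^+(G)+s^-(G)=2m$ and $\surp(G)\le m/2$, one always has $\max(s^+(G),s^-(G))\ge m\ge 4\surp(G)^2/m$, which is stronger than the $2\surp(G)^2/m$ your Cauchy--Schwarz step delivers. The entire content of \cref{thm:main-4} is the bound on the \emph{minimum}, i.e.\ on whichever square energy is small, and the argument with $C=D_xAD_x$ cannot reach it: the identity $E_{A^-}-E_{A^+}=2\surp(G)$ forces only \emph{one} of the two correlations to be large, and (as your own $K_3$ computation shows) the other can be genuinely smaller than $\surp(G)$, so nothing in the argument lower-bounds the deficient $\norm{A^{\pm}}^2$. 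The proposed repair via local optimality of the cut again only constrains the sum $\norm{A^+x}^2+\norm{A^-x}^2$, and even a per-sign bound of the form $\norm{A^-x}^2\ge 4\surp(G)^2/m$ would, as you yourself note, lose a factor of $n$ when converted into a Frobenius-norm bound. So the proposal has a genuine, unclosed gap exactly where the theorem has content, and the sketched continuation is a plan rather than a proof.

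For comparison, the paper bounds each sign separately by plugging a \emph{positive semi-definite} test matrix into the variational characterization of \cref{lem:energy-sdp-2}: for any nonzero positive semi-definite $M$ with $\langle A_G,M\rangle\ge 0$ one has $s^+(G)\ge \langle A_G,M\rangle^2/\langle M,M\rangle$. Taking $H$ to be the max-cut bipartite subgraph with parts $U,W$ and $k\ge m/2$ edges, the choice is $M=A_H^+$ (and $M=A_H^-$ for $s^-$). By \cref{lem:bipartite-adjacency-entries} the $U\times W$ entries of $A_H^+$ equal $\tfrac12$ precisely on the edges of $H$, so $\langle A_H,A_H^+\rangle=k$ while the part of $A_H^+$ supported inside the two parts has squared Frobenius norm only $k/2$; Cauchy--Schwarz against $A_{G[U]\sqcup G[W]}$ (squared norm $2(m-k)$) then gives $\langle A_G,A_H^+\rangle\ge k-\sqrt{(m-k)k}\ge\surp(G)$ and hence $s^+(G)\ge\surp(G)^2/k\ge\surp(G)^2/m$, with the symmetric argument for $s^-$. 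The positive semi-definiteness of the test matrix is what allows $s^+$ and $s^-$ to be attacked one at a time; your $C$ is indefinite, so it can only ever yield the max-type inequality.
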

I don't believe that this bound is tight for the full range of $\surp(G)$ and $m$, and conjecture some potential improvements to this result in \cref{sec:conclusion}. A combination of \cref{thm:main-3} and \cref{thm:main-4}, together with a vertex partition strategy, completes the proof of \cref{thm:main-result-2}.

\subsection*{Organization}
In \cref{sec:prelim}, we introduce a few notations and basic spectral properties that we will use throughout the paper. Subsequently, \cref{sec:additive}, \ref{sec:removal}, \ref{sec:triangle}, \ref{sec:surplus} contain the proofs of \cref{thm:main-1}, \ref{thm:main-2}, \ref{thm:main-3}, \ref{thm:main-4} respectively. Each section also contain the short proofs of the corollaries associated with each theorem. We complete the proof of \cref{thm:main-result-1} at the end of \cref{sec:removal}, and the proof of \cref{thm:main-result-2} at the end of \cref{sec:surplus}. Finally, \cref{sec:conclusion} provides a number of open problems for further study.

\subsection*{Acknowledgements}
I thank Gabriel Coutinho and Thom\'{a}s Spier for numerous helpful discussions, introducing me to the semi-definite programming approach and sharing notes on \cref{lem:energy-sdp-2}. I also thank Clive Elphick for his very nice lecture at University of Waterloo, where he provided a beautiful survey of the backgrounds and progresses on \cref{conjecture:EFGW}, and for providing valuable feedbacks on this manuscript like suggesting and proving the lower bound of $n^0$ in \cref{thm:main-result-1-weak}, as well as suggesting \cref{conj:elphick}.

\section{Notations and Preliminary Results}
\label{sec:prelim}
Throughout the paper, let $G$ be a simple graph with vertex set $V$ and edge set $E$. Let $n$ denote the number of vertices in $G$, and $m$ denote the number of edges in $G$. For a vertex subset $U \subset V$, let $G[U]$ denote the subgraph of $G$ induced by $U$, and let $G \backslash U$ denote the subgraph of $G$ induced by $V \backslash U$.

Let $A_G$ denote the adjacency matrix of $G$, whose rows and columns are indexed by $V$. Let $\lambda_1 \geq \cdots \geq \lambda_n$ denote the eigenvalues of $A_G$, ordered from largest to smallest. 

Let $\norm{\cdot}$ denote the Frobenius norm $\norm{M} := \sqrt{\tr(M^2)}$ on the space of real, symmetric matrices of a given dimension. Let $\langle \cdot, \cdot \rangle$ denote the corresponding inner product $\langle A, B \rangle := \tr(A B).$ For a square matrix $M$ and a set of indices $I$, let $M|_I$ denote the $I \times I$ submatrix $(M_{ij})_{i, j \in I}$ of $M$.

We define two matrices $A_G^+$ and $A_G^-$ as follows. We have a spectral decomposition
$$A_G = \sum_{\lambda} \lambda E_{\lambda}$$
where $\lambda$ goes over the eigenvalues of $A_G$, and $E_{\lambda}$ is the orthogonal projection onto the $\lambda$-eigenspace of $A_G$. We define
$$A_G^+ = \sum_{\lambda > 0} \lambda E_{\lambda} \text{ and }A_G^- = \sum_{\lambda < 0} -\lambda E_{\lambda}$$
Then $A_G^+$ and $A_G^-$ are positive semi-definite matrices with
$$A_G = A_G^+ - A_G^-.$$
We note that $s^+(G) = \norm{A_G^+}^2$, $s^-(G) = \norm{A_G^-}^2$, and $\langle A_G^+, A_G^- \rangle = 0$. 

We will repeatedly use a basic fact about the square energy.
\begin{lemma}[\cite{EFGW16}, Theorem 7]
    If $G$ has $m$ edges, then $s^+(G) + s^-(G) = 2m$. If $G$ is in addition bipartite, then  $s^+(G) = s^-(G) = m$.
\end{lemma}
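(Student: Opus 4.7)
The plan is to exploit the identification $s^+(G) + s^-(G) = \sum_{i=1}^n \lambda_i^2 = \tr(A_G^2)$, which follows immediately from $A_G^+$ and $A_G^-$ being orthogonal (since $\langle A_G^+, A_G^-\rangle = 0$) and the decomposition $A_G = A_G^+ - A_G^-$ noted in the preliminaries. One can also just observe that $\sum \lambda_i^2$ is the squared Frobenius norm of $A_G$, and $s^{\pm}(G)$ split this sum according to sign.

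Next, I would compute $\tr(A_G^2)$ combinatorially. The $(v,v)$ diagonal entry of $A_G^2$ is $\sum_{u} (A_G)_{vu}^2$, which counts the neighbors of $v$, i.e. equals $\deg(v)$. Summing over all vertices and applying the handshake lemma gives $\tr(A_G^2) = \sum_v \deg(v) = 2m$, establishing the first assertion.

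For the bipartite case, let $V = U \sqcup W$ be a bipartition. After ordering vertices so that those in $U$ come first, the adjacency matrix takes the block form $A_G = \begin{pmatrix} 0 & B \\ B^T & 0 \end{pmatrix}$. The key observation is that conjugating by the diagonal sign matrix $S = \begin{pmatrix} I & 0 \\ 0 & -I \end{pmatrix}$ sends $A_G$ to $-A_G$. Hence $A_G$ and $-A_G$ are similar, so their spectra coincide, which means the multiset of eigenvalues of $A_G$ is symmetric about $0$. Therefore $s^+(G) = s^-(G)$, and combined with $s^+(G) + s^-(G) = 2m$ from the first part we conclude $s^+(G) = s^-(G) = m$.

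There is no real obstacle here: both parts are direct computations once one writes down the correct trace identity or symmetry of spectrum. The only thing to be careful about is handling the eigenvalue $0$ (if present) consistently with the convention $s^{\pm}(G) = \sum_{\lambda_i \gtrless 0} \lambda_i^2$, but zero eigenvalues contribute nothing to either $s^+$ or $s^-$, so the accounting is automatic.
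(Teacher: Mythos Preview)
Your proof is correct and follows essentially the same approach as the paper: compute $\tr(A_G^2)=2m$ for the first part, and use the symmetry of the bipartite spectrum (which you justify explicitly via the sign-conjugation $SA_GS^{-1}=-A_G$) for the second. The paper's proof is simply a terser version of yours.
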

\begin{proof}
    The sum of squares of the eigenvalues of $A_G$ is equal to $\tr(A_G^2)$, which is equal to $2m$. So we have $s^+(G) + s^-(G) = 2m$.
    
    The spectrum of any bipartite graph is symmetric with respect to the origin, so $s^+(G) = s^-(G)$. Therefore, both $s^+(G)$ and $s^-(G)$ are equal to $m$.
\end{proof}
We also need some folklore results about the spectrum of specific graphs.
\begin{lemma}
The spectrum of the complete graph $K_n$ is $n - 1, -1, -1, \cdots, -1$, where there are $(n - 1)$ copies of $-1$. In particular, we have $s^+(K_n) = (n - 1)^2$ and $s^-(K_n) = n - 1$.

The spectrum of $K_{1, n - 1}$ is $\sqrt{n - 1}, -\sqrt{n - 1}$, plus $(n - 2)$ copies of $0$.
\end{lemma}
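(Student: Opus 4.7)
The plan is to treat the two graphs separately and do a direct spectral computation in each case.

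For $K_n$: I would write the adjacency matrix as $A_{K_n} = J - I$, where $J$ is the $n \times n$ all-ones matrix. Since $J$ has eigenvalue $n$ on the all-ones vector $\mathbf{1}$ and eigenvalue $0$ on the $(n-1)$-dimensional subspace $\mathbf{1}^\perp$, subtracting $I$ yields eigenvalue $n - 1$ on $\mathbf{1}$ and eigenvalue $-1$ with multiplicity $n - 1$ on $\mathbf{1}^\perp$. The claim $s^+(K_n) = (n-1)^2$ and $s^-(K_n) = (n-1) \cdot (-1)^2 = n - 1$ follows immediately from the definitions.

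For $K_{1, n-1}$: The graph is bipartite, so by the preceding folklore lemma the spectrum is symmetric about $0$, and $s^+ = s^- = m = n - 1$. It therefore suffices to identify the nonzero eigenvalues. I would exhibit the rank: write $A_{K_{1, n-1}} = e_0 v^T + v e_0^T$ where $e_0$ is the indicator of the center and $v$ is the indicator of the leaves; this makes the rank at most $2$, so at least $n - 2$ of the eigenvalues vanish. Alternatively, for a direct argument, let $(x_0, x_1, \ldots, x_{n-1})$ be an eigenvector with eigenvalue $\lambda \neq 0$, where $x_0$ is the coordinate of the center. The eigenvalue equations become $\lambda x_i = x_0$ for $i \geq 1$ and $\lambda x_0 = \sum_{i=1}^{n-1} x_i$. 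The first forces all leaf coordinates to equal $x_0 / \lambda$, and substituting into the second gives $\lambda^2 = n - 1$, so $\lambda = \pm\sqrt{n-1}$. Combined with the $n - 2$ zero eigenvalues, this determines the full spectrum.

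There is no genuine obstacle here since both graphs have very structured adjacency matrices; the only thing to be careful about is making sure the rank and eigenvalue arguments for $K_{1, n-1}$ actually cover all $n$ eigenvalues (which they do, via $(n-2) + 2 = n$). Both final assertions about $s^+$ and $s^-$ then follow by direct summation of squares.
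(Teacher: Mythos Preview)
Your proof is correct and entirely standard. Note, however, that the paper does not actually prove this lemma: it is stated as a ``folklore result'' and left without proof. Your direct computation via $A_{K_n} = J - I$ and the rank/eigenvalue analysis for $K_{1,n-1}$ is exactly the kind of argument one would supply if a proof were required, and there is nothing to compare it against.
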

We call a graph of the form $K_{1, n - 1}$ a \emph{star}.

At one point, we need the spectrum of the graph $U_{n, 3}$ introduced by Abiad et. al. in \cite{Abiad23}.
\begin{lemma}
\label{lem:spectrum-Un3}
Let $U_{n, 3}$ be the $n$-vertex graph obtained by adding an edge between two leaves of the star $K_{1, n - 1}$. Then the least eigenvalue of $U_{n, 3}$ satisfies $\lambda_n(U_{n, 3}) \leq -\sqrt{n- 2}$, with equality if and only if $n = 3$.
\end{lemma}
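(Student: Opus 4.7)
The plan is to compute the full spectrum of $U_{n,3}$ by exploiting its automorphism group. Label the degree-$(n-1)$ center as $v$, the two triangle leaves as $u_1, u_2$, and the remaining $n-3$ pendant leaves as $w_1, \dots, w_{n-3}$. For $n \geq 4$, the partition $\{v\}, \{u_1, u_2\}, \{w_1, \dots, w_{n-3}\}$ is equitable, so the three eigenvalues of the corresponding $3 \times 3$ quotient matrix
$$B = \begin{pmatrix} 0 & 2 & n-3 \\ 1 & 1 & 0 \\ 1 & 0 & 0 \end{pmatrix}$$
are eigenvalues of $A_{U_{n,3}}$; a direct expansion gives the characteristic polynomial
$$p(\lambda) = \lambda^3 - \lambda^2 - (n-1)\lambda + (n-3).$$
The remaining $n-3$ eigenvalues come from eigenvectors orthogonal to the constant-on-parts subspace: the antisymmetric vector on $\{u_1, u_2\}$ contributes the eigenvalue $-1$, and zero-sum vectors on $\{w_1, \dots, w_{n-3}\}$ contribute the eigenvalue $0$ with multiplicity $n-4$, accounting for all $n$ eigenvalues of $U_{n,3}$.

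The crux is then the evaluation
$$p(-\sqrt{n-2}) = -(n-2)\sqrt{n-2} - (n-2) + (n-1)\sqrt{n-2} + (n-3) = \sqrt{n-2} - 1,$$
which is strictly positive for $n \geq 4$ and equals zero when $n = 3$. Since $p$ has positive leading coefficient and tends to $-\infty$ as $\lambda \to -\infty$, the intermediate value theorem supplies a root of $p$ strictly less than $-\sqrt{n-2}$ for every $n \geq 4$; this root, being itself an eigenvalue of $U_{n,3}$, yields the strict bound $\lambda_n(U_{n,3}) < -\sqrt{n-2}$. For the boundary case $n = 3$, the graph $U_{3,3}$ coincides with $K_3$ and has spectrum $\{2, -1, -1\}$, so $\lambda_n = -1 = -\sqrt{n-2}$, which is the unique equality case.

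The only subtlety is the eigenvalue bookkeeping in the small cases $n \in \{3, 4\}$, where the third class of the partition becomes empty or a singleton and $p$ requires reinterpretation; these can be handled by explicit verification as above. Beyond this, the argument is a routine determinantal calculation, so I do not anticipate any genuine obstacle.
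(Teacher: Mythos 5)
Your proposal is correct, but it takes a genuinely different route from the paper. The paper does not compute the spectrum of $U_{n,3}$ at all: it points out a flaw in the computation of \cite[Proposition 3.3]{Abiad23} (their $d_2$ should be $\tfrac{2}{n-1}$, not $\tfrac{1}{n-1}$), and after correcting it obtains the lower bound $\lambda_n(U_{n,3})^2 \geq \bigl(\sqrt{(n-1)+\tfrac{1}{(n-1)^2}} - \tfrac{1}{n-1}\bigr)^2$, which is then checked to be at least $n-2$ with equality exactly at $n=3$ (the verification reduces to $(\tfrac{n-1}{2}-1)^2 \geq 0$). Your argument is instead self-contained: the equitable partition $\{v\},\{u_1,u_2\},\{w_1,\dots,w_{n-3}\}$ gives the quotient polynomial $p(\lambda)=\lambda^3-\lambda^2-(n-1)\lambda+(n-3)$, whose roots are eigenvalues of $A_{U_{n,3}}$, and the evaluation $p(-\sqrt{n-2})=\sqrt{n-2}-1>0$ for $n\geq 4$ together with $p(\lambda)\to-\infty$ as $\lambda\to-\infty$ forces an eigenvalue strictly below $-\sqrt{n-2}$; the case $n=3$ is $K_3$ with least eigenvalue $-1$. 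I verified your quotient matrix, the characteristic polynomial, the eigenvalue bookkeeping ($-1$ from the antisymmetric vector on $\{u_1,u_2\}$ and $0$ with multiplicity $n-4$), and the evaluation at $-\sqrt{n-2}$; all are correct, and the small cases $n\in\{3,4\}$ are handled adequately. What each approach buys: yours removes the dependence on the external (and flawed) computation and in fact determines the exact spectrum, which is more information than the lemma needs; the paper's version is shorter on the page and simultaneously documents the correction to the cited reference, which is part of its purpose.
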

\begin{proof}
In \cite[Proposition 3.3]{Abiad23}, Abiad et. al. gave a bound on $\lambda_{n}(U_{n, 3})$. Unfortunately, their computation has a minor flaw. In their notation, $d_2$ is equal to $\frac{2}{n - 1}$ instead of $\frac{1}{n - 1}$. Once this is corrected, we have
$$\lambda_{n}(U_{n, 3})^2 \geq \frac{1}{4}\left(\sqrt{4(n - 1) + \frac{4}{(n - 1)^2}} - \frac{2}{n - 1}\right)^2 = \left(\sqrt{(n - 1) + \frac{1}{(n - 1)^2}} - \frac{1}{n - 1}\right)^2$$
which we can directly check is always at least $(n- 2)$, with equality if and only if $n =3$.
\end{proof}
\section{Super-additivity of square energy}
\label{sec:additive}
In this section, we prove \cref{thm:main-1} and its corollaries. We are motivated by the semi-definite optimization approach to square energy discussed in \cite{Coutinho2023}. Crucially, we describe $s^+(G)$ and $s^-(G)$ as the objective of an optimization program. Let $\cS_V$ denote the cone of real, positive semi-definite matrices with rows and columns indexed by $V$.
\begin{lemma}
    \label{lem:energy-sdp}
    Let $G$ be any graph. Then we have 
    $$s^+(G) = \min_{M \in \cS_V} \norm{A_G + M}^2$$
    and
    $$s^-(G) = \min_{M \in \cS_V} \norm{A_G - M}^2.$$
\end{lemma}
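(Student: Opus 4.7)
The plan is to prove both equalities by exhibiting an explicit minimizer on one side and using Weyl's monotonicity inequality for the lower bound on the other. I treat the identity for $s^+$; the identity for $s^-$ is symmetric.

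For the ``$\leq$'' direction, recall from the preliminaries that $A_G = A_G^+ - A_G^-$ with $A_G^+, A_G^- \in \cS_V$ and $\langle A_G^+, A_G^-\rangle = 0$. Setting $M = A_G^-$ (which lies in $\cS_V$) gives $A_G + M = A_G^+$, and hence $\norm{A_G + M}^2 = \norm{A_G^+}^2 = s^+(G)$. So the infimum is at most $s^+(G)$. For $s^-(G)$, choose $M = A_G^+$.

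For the ``$\geq$'' direction, let $M \in \cS_V$ be arbitrary and set $N = A_G + M$, a real symmetric matrix. Since $M \succeq 0$, we have $N \succeq A_G$ in the Loewner order. Let $\mu_1 \geq \cdots \geq \mu_n$ be the eigenvalues of $N$ and recall $\lambda_1 \geq \cdots \geq \lambda_n$ are the eigenvalues of $A_G$. By Weyl's monotonicity inequality applied to $N = A_G + M$ with $M \succeq 0$, we get $\mu_i \geq \lambda_i$ for every $i$. In particular, whenever $\lambda_i \geq 0$, also $\mu_i \geq \lambda_i \geq 0$, so $\mu_i^2 \geq \lambda_i^2$. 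Summing,
\[
\norm{A_G + M}^2 = \sum_{i=1}^n \mu_i^2 \;\geq\; \sum_{i:\,\lambda_i \geq 0} \mu_i^2 \;\geq\; \sum_{i:\,\lambda_i \geq 0} \lambda_i^2 \;=\; s^+(G).
\]
For the $s^-$ version, the same argument with $N = A_G - M \preceq A_G$ yields $\mu_i \leq \lambda_i$, so the indices with $\lambda_i \leq 0$ satisfy $\mu_i \leq \lambda_i \leq 0$ and thus $\mu_i^2 \geq \lambda_i^2$; summing gives $\norm{A_G - M}^2 \geq s^-(G)$.

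There is no real obstacle here: the identity is essentially a packaging of the orthogonal decomposition $A_G = A_G^+ - A_G^-$ together with Weyl's inequality, and the whole proof should fit in a few lines. The only mild subtlety worth verifying is that Weyl's inequality is applied in the correct form (namely that adding a PSD matrix shifts every eigenvalue weakly upward), which is standard.
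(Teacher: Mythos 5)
Your proof is correct, and the lower-bound half genuinely differs from the paper's argument. You both certify the upper bound the same way, by plugging in $M = A_G^-$ (resp.\ $M = A_G^+$). For the inequality $\norm{A_G + M}^2 \geq s^+(G)$, the paper stays inside the semidefinite-programming framework: it expands
$\norm{A_G + M}^2 = \norm{A_G^+ + (M - A_G^-)}^2 = \norm{A_G^+}^2 + 2\langle A_G^+, M - A_G^-\rangle + \norm{M - A_G^-}^2$
and discards the last two terms using $\langle A_G^+, A_G^-\rangle = 0$ and $\langle A_G^+, M\rangle \geq 0$ for PSD matrices. You instead invoke Weyl's monotonicity: adding $M \succeq 0$ pushes every ordered eigenvalue weakly up, so $\mu_i \geq \lambda_i$, and on the indices with $\lambda_i \geq 0$ this gives $\mu_i^2 \geq \lambda_i^2$; the sign bookkeeping in your $s^-$ case (where $\mu_i \leq \lambda_i \leq 0$ forces $\mu_i^2 \geq \lambda_i^2$) is also right. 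Your route even yields the stronger eigenvalue-by-eigenvalue domination, though that extra information is not needed here. What the paper's expansion buys is self-containedness and alignment with how the lemma is used later: the same quadratic expansion reappears verbatim in the derivation of the maximization form (Lemma~\ref{lem:energy-sdp-2}), whereas your argument outsources the key step to Weyl's inequality and does not exhibit the minimizer's role as the orthogonal projection of $-A_G$ onto $\cS_V$ quite as transparently.
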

\begin{proof}
    For convenience, write $A^+ = A^+_G$ and $A^- = A^-_G$.
    
    For any $M \in \cS_V$, we observe that
    $$\norm{A_G + M}^2 = \norm{A^+ + (M - A^-)}^2 = \norm{A^+}^2 + 2
    \langle A^+, M - A^- \rangle + \norm{M - A^-}^2.$$
    Observe that $\langle A^+, A^- \rangle = 0$ and $\langle A^+, M \rangle \geq 0$ since both $A^+$ and $M$ are positive semi-definite. Thus we have $\langle A^+, M - A^- \rangle \geq 0$. So we conclude that 
    $$\norm{A_G + M}^2 \geq \norm{A^+}^2 = s^+(G)$$
    with equality when $M = A^-$. Therefore, we have the first identity $s^+(G) = \min_{M \in \cS_V} \norm{A_G + M}^2$.

    The proof of the second identity follows analogously by switching the roles of $A^+$ and $A^-$. We observe that
    $$\norm{A_G - M}^2 = \norm{A^- + (M - A^+)}^2 = \norm{A^-}^2 + 2
    \langle A^-, M - A^+ \rangle + \norm{M - A^+}^2.$$
    We use the facts that $\langle A^+, A^- \rangle = 0$ and $\langle A^-, M \rangle \geq 0$ since both $A^-$ and $M$ are positive semi-definite. Thus we have $\langle A^-, M - A^+ \rangle \geq 0$. So we conclude that 
    $$\norm{A_G + M}^2 \geq \norm{A^-}^2 = s^-(G)$$
    with equality when $M = A^+$. Therefore, we have $s^-(G) = \min_{M \in \cS_V} \norm{A_G - M}^2$.
\end{proof}
\cref{thm:main-1} follows almost immediately from this characterization of square energy.
\begin{proof}[Proof of \cref{thm:main-1}]
    For any real, symmetric $V \times V$ matrix $X$, observe that
    $$\norm{X}^2 = \sum_{i, j \in V} X_{ij}^2 \geq \sum_{i, j \in U} X_{ij}^2 + \sum_{i, j \in W} X_{ij}^2 = \norm{X|_U}^2 + \norm{X|_W}^2$$
    Therefore, for any matrix $M \in \cS_V$, we note that 
    $$\norm{A_G + M}^2 \geq \norm{(A_G + M)|_U}^2 + \norm{(A_G + M)|_W}^2 = \norm{A_{G[U]} + M|_U}^2 + \norm{A_{G[W]} + M|_W}^2.$$
    Since $M|_U$ and $M|_V$ are also positive semi-definite matrices, by \cref{lem:energy-sdp} we have
    $$\norm{A_{G[U]} + M|_U}^2 + \norm{A_{G[W]} + M|_W}^2 \geq s^+(G[U]) + s^+(G[W]).$$
    Applying \cref{lem:energy-sdp} again, we conclude that
    $$s^+(G) = \min_{M \in \cS_V} \norm{A_G + M}^2 \geq s^+(G[U]) + s^+(G[W]).$$
    The proof for $s^-(G)$ follows analogously by replacing all the $+$'s in front of $M$ with $-$'s.
\end{proof}
We now demonstrate the various corollaries of this result. The proofs of \cref{cor:main-1-2} and \cref{cor:main-1-3} are straightforward.
\begin{proof}[Proof of \cref{cor:main-1-2}]
Suppose $G$ is unicyclic. Let $v$ be any vertex on the cycle $C$. Then $G \backslash \{v\}$ consists of exactly one connected component $V_1$ that contains $C \backslash \{v\}$. Let $V_2 = V \backslash V_1$. Then the induced subgraphs $G[V_1]$ and $G[V_2]$ are trees. Therefore, they satisfy $s^{\pm}(G[V_1]) = \abs{V_1} - 1$ and $s^{\pm}(G[V_2]) = \abs{V_2} - 1$ by symmetry of the spectrum. We conclude by \cref{thm:main-1} that
$$s^+(G) \geq s^{+}(G[V_1]) + s^{+}(G[V_2]) = \abs{V_1} + \abs{V_2} - 2 = n - 2$$
and
$$s^-(G) \geq s^{-}(G[V_1]) + s^{-}(G[V_2]) = \abs{V_1} + \abs{V_2} - 2 = n - 2.$$
\end{proof}
\begin{proof}[Proof of \cref{cor:main-1-3}]
    Since each $G[V_i]$ is a bipartite graph with at least $\abs{V_i}$ edges, we have $s^{\pm}(G[V_1]) = \abs{E[V_i]} \geq \abs{V_i}$.
    By \cref{cor:main-1-restate}, we conclude that
    $$s^+(G) \geq \sum_{i = 1}^k s^{+}(G[V_i]) = \sum_{i = 1}^k \abs{V_i} = n$$
    and
    $$s^-(G) \geq \sum_{i = 1}^k s^{-}(G[V_i]) = \sum_{i = 1}^k \abs{V_i} = n$$
    as desired.
\end{proof}

To show \cref{cor:main-1-1}, we need a decomposition of the graph $G$ into stars and complete graphs.
\begin{lemma}
\label{lem:decompose-stars-cliques}
    Let $G$ be a graph without isolated vertices. Then there exists a partition $V = V_1 \sqcup \cdots \sqcup V_k$ of the vertex set such that for each $i$, $V_i$ contains at least two vertices, and $G[V_i]$ is either a star or a clique.
\end{lemma}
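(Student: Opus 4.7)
The plan is to proceed by strong induction on $n = |V(G)|$. At each inductive step I identify a single part $V_1 \subseteq V$ with $|V_1| \geq 2$ such that $G[V_1]$ is a star or a clique and such that $G \backslash V_1$ still has no isolated vertex; applying the inductive hypothesis to $G \backslash V_1$ then produces the remaining parts. The base case $n = 2$ is immediate, since the only graph on two vertices with no isolated vertex is $K_2$, which is simultaneously a star and a clique.

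Suppose first that $G$ contains a vertex of degree one. Choose a vertex $w$ that has at least one pendant neighbor, and set
$$V_1 := \{w\} \cup \{x \in V : N_G(x) = \{w\}\},$$
so that $V_1$ collects $w$ together with every pendant vertex whose unique neighbor is $w$. Then $G[V_1]$ is a star centered at $w$ with $|V_1| \geq 2$. To see that $G \backslash V_1$ has no isolated vertex, suppose some $y \notin V_1$ satisfies $N_G(y) \subseteq V_1$. Every vertex in $V_1 \backslash \{w\}$ is a pendant whose only neighbor is $w$, so none of them is adjacent to $y$, and therefore $N_G(y) \subseteq \{w\}$. Since $y$ is not isolated in $G$, we must have $N_G(y) = \{w\}$, which puts $y$ into $V_1 \backslash \{w\}$, a contradiction.

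Now assume every vertex of $G$ has degree at least two. Call an edge $\{u, v\}$ \emph{safe} if no $x \notin \{u, v\}$ satisfies $N_G(x) \subseteq \{u, v\}$. If some safe edge exists, I set $V_1 = \{u, v\}$; safety is precisely what ensures that $G \backslash V_1$ remains isolate-free, and $G[V_1] \cong K_2$. Otherwise every edge $e = \{u, v\}$ admits $x_e \notin \{u, v\}$ with $N_G(x_e) \subseteq \{u, v\}$, and the minimum-degree hypothesis upgrades this to $N_G(x_e) = \{u, v\}$ and $\deg(x_e) = 2$, so $\{u, v, x_e\}$ is a triangle. I then invoke the same obstruction for the edge $\{u, x_e\}$: there is $x' \notin \{u, x_e\}$ with $N_G(x') = \{u, x_e\}$, and the relation $x' \sim x_e$ together with $N_G(x_e) = \{u, v\}$ forces $x' = v$, whence $\deg(v) = 2$ and $N_G(v) = \{u, x_e\}$. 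A symmetric application to $\{v, x_e\}$ gives $\deg(u) = 2$ and $N_G(u) = \{v, x_e\}$. Hence $\{u, v, x_e\}$ is a connected component of $G$ isomorphic to $K_3$, and I take $V_1$ to be this entire component; removing a full component cannot isolate any vertex in the others.

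The main technical point is the rigidity in the no-safe-edge subcase: the chain argument shows that the only way every edge can fail safety is for the offending triangle to close up into an honest $K_3$ connected component. Once this observation is secured the two subcases of the minimum-degree-at-least-two regime are exhausted, and combined with the pendant case the induction goes through cleanly, with the hypothesis that $G$ has no isolated vertex preserved at every recursive step.
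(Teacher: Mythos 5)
Your proof is correct: the pendant case, the safe-edge case, and the rigidity argument (where failure of safety for every edge, combined with minimum degree at least two, forces $N_G(x_e)=\{u,v\}$, then $N_G(v)=\{u,x_e\}$ and $N_G(u)=\{v,x_e\}$, i.e.\ an isolated $K_3$ component) all check out, and in each case the removed part has at least two vertices, induces a star or a clique, and leaves an isolate-free graph, so the strong induction closes. It is, however, a genuinely different route from the paper's. The paper also inducts on $n$, but extracts its part via a spanning tree $T$ of a connected component: it picks a vertex $u$ adjacent to a leaf of $T$, lets $L$ be the leaves of $T$ adjacent to $u$, and splits into cases according to whether the component has at most three vertices, whether some edge of $G$ joins two vertices of $L$ (yielding a $K_2$), or neither (yielding the star on $\{u\}\cup L$); the isolate-free property of the remainder is then inferred from connectivity of what survives of the component. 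Your argument dispenses with the spanning tree entirely: when a pendant vertex exists you peel off a vertex together with all of its private pendant neighbours, and in the minimum-degree-two regime the safe-edge dichotomy either hands you a $K_2$ whose removal is harmless by the very definition of safety, or an honest $K_3$ component. What your route buys is that the invariant to be preserved --- no isolated vertices --- is verified by purely local neighbourhood checks, with no appeal to connectivity of the remainder (a point the paper's Case 3 leans on and which depends on how $u$ is chosen in the tree); what the paper's route buys is a single uniform leaf-based extraction that needs no separate treatment of the minimum-degree-two situation. Both constructions, notably, only ever produce stars and cliques on two or three vertices, which is all that the application to the corollary requires.
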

\begin{proof}
We argue by induction on the number of vertices in $G$. When $G$ has $2$ vertices, the result is trivial since $G = P_2$ and $V$ itself is such a partition.

Suppose the result holds for all graphs with less vertices than $G$. Let $G_1$ be any connected component of $G$, and let $T$ be any spanning tree of $G_1$. We consider a vertex $u \in T$ that is adjacent to at least one leaf of $T$. Let $L$ be the set of leaves adjacent to $V$ in $T$. We divide into three cases.

\textbf{Case 1: }$G_1$ has at most $3$ vertices. We take $V_1$ to be the vertex set of $G_1$. Since $G_1$ is connected, it is either a star $K_{1, 2}$ or a clique $K_2$ or $K_3$. Furthermore, $G \backslash V_1$ has no isolated vertices, so we can apply the induction hypothesis on $G \backslash V_1$ to partition the remaining vertices.

\textbf{Case 2: }$G_1$ has at least $4$ vertices, and there is an edge in $G$ between two vertices $v_1$ and $v_2$ in $L$. We let $V_1 = \{v_1, v_2\}$. Then $G[V_1]$ is a clique on $2$ vertices. Furthermore, $G_1 \backslash V_1$ is a connected graph with at least two vertices, so it has no isolated vertices. Thus $G \backslash V_1$ has no isolated vertices, so we can apply the induction hypothesis on $G \backslash V_1$ to partition the remaining vertices.

\textbf{Case 3: }There is no edge in $G$ between any pair of vertices in $L$. Let $V_1 = \{u\} \cup L$. Then $G[V_1]$ is a star. Furthermore, $G_1 \backslash V_1$ is a connected graph. If it has exactly one vertex $a$, then $a$ must be a leaf in $T$ and adjacent to $u$, contradicting the definition of $L$. Therefore, $G_1 \backslash V_1$ is either empty or has at least two vertices. In either case, $G \backslash V_1$ has no isolated vertices, so we can apply the induction hypothesis on $G \backslash V_1$ to partition the remaining vertices. 
\end{proof}
This decomposition leads to the proof of \cref{cor:main-1-1}.
\begin{proof}[Proof of \cref{cor:main-1-1}]
    Let $V = V_1 \sqcup \cdots \sqcup V_k$ be a partition of the vertex set of $G$ given by \cref{lem:decompose-stars-cliques}. Since stars and cliques satisfy \cref{conjecture:EFGW}, for each $i$ we have
    $$\min(s^+(G[V_i]), s^-(G[V_i])) \geq \abs{V_i} - 1 \geq \frac{1}{2} \abs{V_i}.$$
    By \cref{cor:main-1-restate}, we conclude that
    $$\min(s^+(G), s^-(G)) \geq \sum_{i = 1}^k \min(s^+(G[V_i]), s^-(G[V_i])) \geq \sum_{i = 1}^k \frac{1}{2} \abs{V_i} = \frac{n}{2}$$
    as desired.
\end{proof}
\section{Removing one vertex}
\label{sec:removal}
This section is devoted to \cref{thm:main-2} and its corollaries, culminating in the proof of \cref{thm:main-result-1}. Unlike the relatively computation-free proof of \cref{thm:main-1}, the proof of \cref{thm:main-2} relies on a peculiar numerical coincidence.

For a square matrix $M$ and row index $i$, let $s_i(M)$ denote the sum of squares of all entries of $M$ that are either on the $i$-th row or on the $i$-th column. If $M|_{-i}$ denote the matrix obtained by removing the $i$-th row and $i$-th column from $M$, then we have $\norm{M}_F^2 \geq \norm{M|_{-i}}_F^2 + s_i(M).$ The key lemma is
\begin{lemma}
\label{lem:3by3}
Let $A = \begin{bmatrix}
    0 & 1 & 0 \\
    1 & 0 & 1 \\
    0 & 1 & 0
\end{bmatrix}$ be the adjacency matrix of $P_3$. Let $M$ be any positive semi-definite $3 \times 3$ matrix. Then there exists an $i \in \{1,2,3\}$ such that $s_i(A - M) > 1$, and there exists a $j \in \{1,2,3\}$ such that $s_j(A + M) > 1$.\footnote{The constant $1$ here is not optimal. It can be replaced with something slightly larger than $1$.}
\end{lemma}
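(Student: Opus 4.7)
Here is a plan of attack.

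Parametrize the symmetric matrix $M = \begin{pmatrix} a & d & e \\ d & b & f \\ e & f & c \end{pmatrix}$ by its six entries, with $a, b, c \geq 0$, and compute the three target quantities explicitly:
\[
s_1(A - M) = a^2 + 2(1-d)^2 + 2e^2, \quad
s_2(A - M) = b^2 + 2(1-d)^2 + 2(1-f)^2, \quad
s_3(A - M) = c^2 + 2(1-f)^2 + 2e^2.
\]

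The first move is to establish a \emph{global sum bound}. Adding the three expressions gives the identity
\[
\sum_{i=1}^{3} s_i(A - M) = 2\|A - M\|_F^2 - (a^2 + b^2 + c^2),
\]
and since $-M$ is negative semi-definite, the Weyl inequality yields $\lambda_{\min}(A - M) \leq \lambda_{\min}(A) = -\sqrt{2}$, hence $\|A - M\|_F^2 \geq 2$. Therefore $\sum_i s_i \geq 4 - (a^2 + b^2 + c^2)$, and whenever $a^2 + b^2 + c^2 < 1$ we immediately get $\max_i s_i > 1$ by pigeonholing. This disposes of the ``low diagonal mass'' regime cleanly, and it also exhibits where the sharp case must live, namely at $M$ with $a^2 + b^2 + c^2$ close to or exceeding $1$.

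The second move treats the remaining regime $a^2 + b^2 + c^2 \geq 1$ by contradiction: assume every $s_i(A - M) \leq 1$. The individual $s_i \leq 1$ constraints give tight coordinate bounds
\[
a, b, c \leq 1, \qquad d, f \in \bigl[1 - \tfrac{1}{\sqrt{2}},\, 1 + \tfrac{1}{\sqrt{2}}\bigr], \qquad |e| \leq \tfrac{1}{\sqrt{2}}.
\]
In particular $d, f \geq 1 - 1/\sqrt{2}$ is bounded below by a positive constant, so the PSD $2\times 2$ minors $d^2 \leq ab$ and $f^2 \leq bc$ force $a, b, c$ to be bounded below as well, pushing all of $a, b, c$ near $1$. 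The plan is then to feed these into the determinant condition $\det M \geq 0$, which expands as $abc - af^2 - cd^2 - be^2 + 2def \geq 0$. In the limiting case $a = b = c = d = f = 1$ one directly computes $\det M = -(1-e)^2$; since $|e| \leq 1/\sqrt 2 < 1$ this is strictly negative, and the plan is to upgrade this to a quantitative inequality showing that $\det M < 0$ under the whole constraint set, contradicting $M \succeq 0$. Alternatively, one may use the unit eigenvector $v = (1, -\sqrt{2}, 1)/2$ of $A$ with eigenvalue $-\sqrt{2}$: the constraint $v^T M v \geq 0$ becomes $a + 2b + c + 2e \geq 2\sqrt{2}(d+f)$, and combining this linear condition with the coordinate bounds should yield the desired contradiction. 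The $s^+$ version is entirely parallel, working with $A + M$, the eigenvector $v = (1, \sqrt{2}, 1)/2$, and the sign flips in $(1+d), (1+f)$.

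The main obstacle will be the quantitative bookkeeping in the second move. Because the footnote warns that the constant $1$ is not sharp (the true infimum of $\max_i s_i(A - M)$ over PSD $M$ is some explicit $c^\ast > 1$, numerical experiments suggest $c^\ast \approx 1.1$), every inequality used must be tracked with its strict margin; a weak estimate on $\|A - M\|_F^2$ or on $\det M$ would only recover $\max_i s_i \geq 1$ rather than the strict $> 1$ that the lemma asserts. In practice I would either push the Weyl bound to $\|A - M\|_F^2 \geq 2 + \epsilon(M)$ with an explicit $\epsilon$ accounting for how far $M$ is from sending $v$ to zero, or simply grind out the three-variable polynomial inequality by reducing via the symmetry swapping vertices $1$ and $3$, assuming $a = c, d = f$, and applying Lagrange multipliers at the minimizer (where all three $s_i$ coincide and $M$ is rank-deficient).
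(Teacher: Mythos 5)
Your first move is correct, and it is a genuinely different observation from anything in the paper: summing $s_1+s_2+s_3 = 2\|A-M\|_F^2-(a^2+b^2+c^2)$ and using Weyl to get $\|A-M\|_F^2 \ge \lambda_{\min}(A-M)^2 \ge 2$ cleanly disposes of the regime $a^2+b^2+c^2<1$. But the heart of the lemma is the complementary regime, and there your proposal stops at a plan: ``upgrade this to a quantitative inequality'' and ``should yield the desired contradiction'' is precisely the step that must be carried out, and the two heuristics you offer for it do not survive scrutiny. First, the claim that the minors $d^2\le ab$, $f^2\le bc$ ``push all of $a,b,c$ near $1$'' is false: they only give $ab,\,bc \ge (1-1/\sqrt2)^2\approx 0.086$, and the near-extremal configurations for the constraints $s_i\le 1$ have diagonal entries around $0.7$, not near $1$, so the determinant computation at $a=b=c=d=f=1$ is not representative of the hard region. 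Second, ``combining this linear condition [$a+2b+c+2e\ge 2\sqrt2(d+f)$] with the coordinate bounds'' cannot give a contradiction, because the box bounds alone are compatible with it: $a=b=c=1$, $d=f=1-1/\sqrt2$, $e=1/\sqrt2$ satisfies all of your coordinate bounds, both minors, and the linear inequality. To make the eigenvector route work you must optimize $v^TMv$ over the convex region cut out by the three quadratic constraints $s_i\le 1$ themselves (by the $1\leftrightarrow 3$ symmetry and convexity one may take $a=c$, $d=f$, reducing to a one-variable problem); that maximum is indeed negative (numerically about $-0.12$), so the route can be completed, but this optimization --- not the box bounds --- is the missing content of the proof.

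For comparison, the paper argues directly by contradiction from the three inequalities $s_i(A-M)\le 1$: with its labelling it sets $x$ to be the average of the two off-diagonal entries adjacent to the middle vertex, derives $c^2\le 1-4(1-x)^2$ (forcing $x\ge 0.5$) and, via Cauchy--Schwarz, $(a+f+2d)^2\le 6(1-2(1-x)^2)$, and then uses the full one-parameter family of PSD tests $v_\lambda=(1,-\lambda,1)^T$, whose discriminant gives $c(a+f+2d)\ge 4x^2$ --- a strictly stronger use of positive semi-definiteness than a single eigenvector. Everything reduces to the one-variable inequality $16x^4>6(1-4(1-x)^2)(1-2(1-x)^2)$ on $[0.5,1]$, and the $A+M$ case follows by conjugating with $\mathrm{diag}(1,-1,1)$, which flips the sign of $A$ while preserving positive semi-definiteness (your ``parallel'' treatment of $s^+$ would also work, but this trick avoids redoing the estimates). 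So your overall strategy (contradiction plus PSD test vectors) is in the right spirit and your move one is a nice addition, but the decisive quantitative inequality is not proved in the proposal, and two of its intermediate claims are incorrect as stated.
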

\begin{proof}
    Suppose for the sake of contradiction that for each $i \in \{1,2,3\}$, we have $s_i(A - M) \leq 1$. Assume
    $$M = \begin{bmatrix}
    a & b & d \\
    b & c & e \\
    d & e & f
\end{bmatrix}.$$
Then we have
$$A - M = \begin{bmatrix}
    -a & 1-b & -d \\
    1-b & -c & 1-e \\
    -d & 1-e & -f
\end{bmatrix}.$$
so we must have
$$\begin{cases}
    a^2 + 2(1 - b)^2 + 2d^2 \leq 1 \\
    f^2 + 2(1 - e)^2 + 2d^2 \leq 1 \\
    c^2 + 2(1 - b)^2 + 2(1 - e)^2 \leq 1
\end{cases}.$$
Set $x = (b + e) / 2$. Using the AM-GM inequality and the third inequality, we have
$$c^2 \leq 1 - 2(1 - b)^2 - 2(1 - e)^2 \leq 1 - 4(1 - x)^2.$$
In particular, as $c^2 \geq 0$, we must have $x \geq 0.5$.

Adding the first two inequalities, we obtain
$$a^2 + f^2 + 4d^2 + 2(1 - b)^2 + 2(1 - e)^2  \leq 2.$$
Therefore we have
$$a^2 + f^2 + 4d^2 \leq 2 - 4(1 - x)^2 = 2(1 - 2(1 - x)^2).$$
Applying the Cauchy-Schwarz inequality, we obtain
$$(a + f + 2d)^2 \leq (a^2 + f^2 + 4d^2)(1 + 1 + 1) \leq 6(1 - 2(1 - x)^2).$$
We now use the assumption that $M$ is positive semi-definite. For any $\lambda \in \RR$, set $v_{\lambda} = \begin{bmatrix}
    1 \\
    -\lambda \\
    1
\end{bmatrix}$. We have $v_{\lambda}^T M v_{\lambda} \geq 0$, which implies
$$(a + f + 2d) - 2(b + e) \lambda + c \lambda^2 \geq 0.$$
Since this holds for any $\lambda \in \RR$, computing the discriminant of this quadratic polynomial gives
$$c(a + f + 2d) \geq (b + e)^2 = 4x^2.$$
Furthermore, looking at the top and bottom $2 \times 2$ submatrix of $M$, we have $ac \geq b^2$ and $cf \geq e^2$. Since we must have $a,c,f \in [0, 1]$, we have $b, e \leq 1$, so $x = (b + e) / 2 \leq 1$.

Combining everything above, we have $x \in [0.5, 1]$ and
$$16 x^4 \leq c^2(a + f + 2d)^2 \leq 6(1 - 4(1 - x)^2) (1 - 2(1 - x)^2).$$
However, a direct computation via Desmos or Wolfram-Alpha shows that for $x \in [0.5, 1]$, we have (with a leeway of $0.5$)
$$16 x^4 > 6(1 - 4(1 - x)^2) (1 - 2(1 - x)^2)$$
which gives a contradiction. So there must exist some $i \in \{1,2,3\}$ such that $s_i(A - M) > 1$.

To prove the statement for $A + M$. Let $M' = DMD$, where $D$ is the diagonal matrix $\mathsf{diag}(1, -1, 1)$. Since $A = -DAD$, we have $(A + M) = D(A - M')D$, so we have $s_i(A + M) = s_i(A - M')$ for every $i \in \{1,2,3\}$. Since $M'$ is positive semi-definite, there exists an $i$ such that $s_i(A - M') > 1$, so $s_i(A + M) > 1$.
\end{proof}
With this lemma, we are ready to prove \cref{thm:main-2}.
\begin{proof}[Proof of \cref{thm:main-2}]
    By \cref{lem:energy-sdp}, there exists a positive semi-definite matrix $M$ such that $s^-(G) = \norm{A_G - M}^2$. As $A_{G[U]}$ is the adjacency matrix of $P_3$, by \cref{lem:3by3}, there exists some $u \in U$ such that $s_u(A_{G[U]} - M|_U) > 1$. Thus, we have
    $$\norm{A_G - M}^2 = \norm{A_{G\backslash\{u\}} - M|_{-u}}^2 + s_u(A_G - M) \geq \norm{A_{G\backslash\{u\}} - M|_{-u}}^2 + s_u(A_{G[U]} - M|_U).$$
    By \cref{lem:energy-sdp} again, we have $\norm{A_{G\backslash\{u\}} - M|_{-u}}^2 \geq s^-(G \backslash\{u\})$. So we conclude that $s^-(G) > s^-(G \backslash\{u\}) + 1$, as desired.

    The proof for $s^+$ follows analogously by replacing all $-$'s in front of $M$ with $+$'s.
\end{proof}
We now describe the strategy of \cref{cor:main-2-1}. Let $G$ be a graph with a dominating vertex $v_*$, and let $G'$ denote $G \backslash \{v_*\}$. If $G'$ contains an induced copy of $P_3$, then we can use \cref{thm:main-2} to remove a vertex in this path and decrease $s^-$ by more than $1$; note that this maintains the dominating vertex. Repeating this process, we eventually arrive at a graph where $G'$ does not have a copy of $P_3$. In other words, $G'$ is a disjoint union of cliques! Fortunately, this case can be dealt with using a generalization of the argument in \cite[Proposition 3.3]{Abiad23}.
\begin{lemma}
    \label{lem:universal-endgame}
    Let $G$ be a graph on $n$ vertices with a dominating vertex $v_*$ such that $G \backslash \{v_*\}$ is a disjoint union of cliques. Then $s^-(G) \geq n - 1$, with equality if and only if $G \cong K_{1, n - 1}$ or $G \cong K_{n}$.
\end{lemma}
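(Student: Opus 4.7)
The plan is to diagonalize $A_G$ using the symmetry imposed by $v_*$ together with the clique decomposition of $G \setminus \{v_*\}$. Let $V_1, \ldots, V_t$ denote the clique components of $G \setminus \{v_*\}$, with $|V_i| = n_i$ and $\sum_i n_i = n - 1$. Any vector supported on a single $V_i$ with zero coordinate-sum is an eigenvector of $A_G$ with eigenvalue $-1$, contributing an $(n - 1 - t)$-dimensional $(-1)$-eigenspace. Its orthogonal complement is spanned by $e_{v_*}$ together with the normalized block indicators $f_i = n_i^{-1/2} \mathbf{1}_{V_i}$, is invariant under $A_G$, and in this basis $A_G$ acts as the $(t+1) \times (t+1)$ symmetric matrix
\[
A' = \begin{pmatrix} 0 & \sqrt{n_1} & \cdots & \sqrt{n_t} \\ \sqrt{n_1} & n_1 - 1 & & \\ \vdots & & \ddots & \\ \sqrt{n_t} & & & n_t - 1 \end{pmatrix}.
\]
Thus $s^-(G) = (n - 1 - t) + s^-(A')$, and the target reduces to showing $s^-(A') \geq t$.

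I would then show that $A'$ has exactly one negative eigenvalue $\mu^*$. Cauchy interlacing against the principal submatrix $\mathrm{diag}(n_1 - 1, \ldots, n_t - 1) \succeq 0$ gives at most one, while the $2 \times 2$ principal submatrix of $A'$ on rows $\{0, 1\}$ has determinant $-n_1 < 0$, giving at least one. A Schur-complement computation identifies the eigenvalues of $A'$ that differ from every $n_i - 1$ as the roots of the secular equation $\mu = \sum_{i} n_i/(\mu - (n_i - 1))$, so $\mu^*$ is its unique negative root.

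Setting $y = -\mu^* > 0$ turns the secular equation into $y = \sum_{i} n_i/(n_i - 1 + y)$, and $\Phi(y) := \sum_{i} n_i/(n_i - 1 + y) - y$ is strictly decreasing on $(0, \infty)$ by termwise differentiation. Hence $y \geq \sqrt{t}$ is equivalent to the estimate
\[
\sum_{i = 1}^t \frac{n_i}{n_i - 1 + \sqrt{t}} \geq \sqrt{t},
\]
which I would check holds term-by-term: $\frac{n_i}{n_i - 1 + \sqrt{t}} \geq \frac{1}{\sqrt{t}}$ rearranges to $(n_i - 1)(\sqrt{t} - 1) \geq 0$, automatic for $n_i, t \geq 1$. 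Therefore $s^-(A') = \mu^{*2} \geq t$ and $s^-(G) \geq n - 1$. The inequality is tight iff the term-by-term bound is tight for each $i$, forcing either $t = 1$ (whence $G \cong K_n$) or every $n_i = 1$ (whence $G \cong K_{1, n-1}$). The main technical work is extracting the secular equation and noticing the decoupling into a clean term-by-term estimate; a pleasant bonus is that the substitution $y = -\mu$ automatically absorbs the would-be asymptote at $\mu = 0$ arising from singleton cliques (leaves of $v_*$), so no separate case split between $G$ with or without leaves is necessary.
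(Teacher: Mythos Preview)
Your proof is correct and diverges from the paper's in its second half. Both arguments begin by isolating the $(n-1-t)$-dimensional $(-1)$-eigenspace of vectors that are zero-sum on each clique and vanish at $v_*$. From there the paper proceeds purely by eigenvalue interlacing: it notes that $G$ contains an induced $K_{1,t}$ (one vertex per clique together with $v_*$), so $\lambda_n(G)\le -\sqrt{t}$, and combines this least eigenvalue with the $(n-1-t)$ copies of $-1$ to obtain $s^-(G)\ge (n-1-t)+t=n-1$; for the equality characterization it invokes a separate lemma bounding the least eigenvalue of the auxiliary graph $U_{t+2,3}$. You instead write down the $(t+1)\times(t+1)$ quotient matrix $A'$ explicitly, show by interlacing that it has exactly one negative eigenvalue, and locate that eigenvalue via the secular equation; the term-by-term estimate $(n_i-1)(\sqrt{t}-1)\ge 0$ then delivers both the bound $|\mu^*|\ge\sqrt{t}$ and the full equality characterization in one stroke. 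The paper's route is shorter and more conceptual but depends on an external computation about $U_{n,3}$; yours is self-contained, handles the equality case without a separate induced-subgraph argument, and in fact pins down $\lambda_n(G)$ exactly as the root of an explicit equation, at the cost of somewhat more algebra.
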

\begin{proof}
    If $G \cong K_{1, n - 1}$ or $G \cong K_{n}$ the result follows by direct computation, so we assume that $G$ is neither.
    
    Let $\cC_1, \cdots, \cC_k$ be the cliques in $G \backslash \{v_*\}$. We first note that every vector $w: V \to \RR$ that satisfies
    $$\begin{cases}
        \sum_{v \in \cC_j} w_v = 0, \forall j \in [k] \\
        w_{v_*} = 0
    \end{cases}$$
    is an eigenvector of $A_G$ with eigenvalue $-1$. Computing the dimension of the $v$'s that satisfy these relations, $A_G$ has eigenvalue $-1$ with multiplicity at least $\sum_{i = 1}^k (\abs{\cC_i} - 1)$.

    On the other hand, $G$ contains $K_{1, k}$ as an induced subgraph by taking one vertex from each clique. Therefore, the least eigenvalue of $G$ is at most the least eigenvalue of $K_{1, k}$, which is $-\sqrt{k} < -1$.

    We conclude that $A_G$ has eigenvalue $-1$ with multiplicity at least $\sum_{i = 1}^k (\abs{\cC_i} - 1)$, together with the least eigenvalue which is at most $-\sqrt{k}$. So we conclude that
    $$s^-(G) \geq \sum_{i = 1}^k (\abs{\cC_i} - 1) + (-\sqrt{k})^2 = \sum_{i = 1}^k \abs{\cC_i} = n - 1$$
    as desired.

    For equality to hold, the least eigenvalue of $A_G$ must be exactly $-\sqrt{k}$. However, since $G$ is neither $K_{1, n - 1}$ nor $K_n$, $G$ must contain $U_{k + 2, 3}$ as a subgraph with $k \geq 2$. By \cref{lem:spectrum-Un3}, the least eigenvalue of $U_{k + 2, 3}$ is strictly less than $-\sqrt{k}$, so we must have $\lambda_{n}(G)^2 > k$. Thus, the equality in $s^-(G) \geq n - 1$ does not hold.
\end{proof}
\begin{proof}[Proof of \cref{cor:main-2-1}]
    Suppose for the sake of contradiction that $G$ is a counterexample to \cref{cor:main-2-1} with the smallest number of vertex. Let $v_*$ be the universal vertex of $G$, and let $G' = G \backslash \{v_*\}$.  If $G'$ is a disjoint union of cliques, then \cref{lem:universal-endgame} shows that $G'$ must satisfy \cref{cor:main-2-1}, contradiction.
    
    If $G'$ is not a disjoint union of cliques, then $G'$ contains an induced copy of $P_3$. By \cref{thm:main-2}, there exists a vertex $u$ in $G'$ such that $s^-(G) > s^-(G \backslash \{u\}) + 1$. Since $G \backslash \{u\}$ has $(n - 1)$ vertices and a dominating vertex $v_*$, the minimality of $G$ implies that $s^-(G \backslash \{u\}) \geq n - 2$. Thus, $s^-(G) > n - 1$, so $G$ satisfies \cref{cor:main-2-1}, contradiction. Thus, we conclude that \cref{cor:main-2-1} has no counterexample.
\end{proof}
The proof of \cref{thm:main-result-1} is now an easy combination of \cref{cor:main-2-1} and the decomposition approach in \cref{thm:main-1}.
\begin{proof}[Proof of \cref{thm:main-result-1} and \cref{thm:main-result-1-weak}]
    Let $D$ be the smallest dominating set in $G$. We introduce an arbitrary total ordering on $D$. For each $i \in D$, let $V_i$ be the vertex set consisting of $i$ itself, and every vertex $v \in V \backslash D$ such that $i$ is the smallest neighbor of $v$ in $D$. Then $\{V_i\}_{i \in D}$ forms a partition of $V(G)$. By \cref{thm:main-1}, we have
    $$\min(s^+(G), s^-(G)) \geq \sum_{i \in D} \min(s^+(G[V_i]), s^-(G[V_i])).$$
    Each $G[V_i]$ has a universal vertex $i$. \cref{cor:main-2-1}, combined with \cite[Remark 1.3]{Abiad23}, shows that 
    $$\min(s^+(G[V_i]), s^-(G[V_i])) \geq \abs{V_i} - 1.$$ 
    Thus, we conclude that
    $$\min(s^+(G), s^-(G)) \geq n - \abs{D}$$
    which proves \cref{thm:main-result-1}.

    The largest independent set in $G$ is always a dominating set. Therefore, we have $\gamma(G) \leq \alpha(G)$, where $\alpha(G)$ is the independence number of $G$. Furthermore, 
    it is familiar that $\max(n^+, n^-) \leq n -\alpha(G)$, so we have $n - \gamma(G) \geq \max(n^+, n^-)$. In addition, Abiad et. al. \cite[Theorem 3.1]{AAFM} has shown that $n - \gamma(G) \geq n^0$ for any connected graph $G$ with at least two vertices. Since both sides are additive under disjoint union, this result holds for any graph $G$ with no isolated vertex. So we conclude that $\min(s^+(G), s^-(G)) \geq n - \gamma(G) \geq \max(n^+, n^-, n^0)$, which proves \cref{thm:main-result-1-weak}. 
\end{proof}
Finally, \cref{cor:main-2-3} follows directly from \cref{thm:main-2}.
\begin{proof}[Proof of \cref{cor:main-2-3}]
    Let $U$ be the three consecutive vertices of degree $2$ on the cycle in $G$. Then $G[U]$ is isomorphic to $P_3$. So by \cref{thm:main-2}, there exists some $u \in U$ such that $s^-(G) > s^-(G \backslash \{u\}) + 1$. Since every vertex of $U$ has degree $2$, $G \backslash \{u\}$ must be a tree. So $s^-(G \backslash \{u\}) = n - 2$, thus $s^-(G) > n - 2 + 1 = n - 1$ as desired. The proof for $s^+(G)$ is analogous.
\end{proof}
\section{Triangle Counting}
\label{sec:triangle}
In this section, we prove \cref{thm:main-3} and its various corollaries. As we mentioned in the introduction, our argument for \cref{thm:main-3} is essentially a subset of the methods in \cite[Section 5]{RST23}. 
\begin{proof}[Proof of \cref{thm:main-3}]
Let $G$ be a graph with $n$ vertices and $m$ edges. If $s^-(G) \leq m$, then $s^+(G) \geq m$. Since $\lambda_1 \geq 1$ and $\lambda_1 \geq \frac{2m}{n}$, we have
$$\frac{m^{4/3}}{n^{1/3} \lambda_1^{2/3}}\leq \frac{m^{4/3}}{n^{1/3} \lambda_1^{1/3}} \leq \frac{m^{4/3}}{(2m)^{1/3}} \leq m$$
so \cref{thm:main-3} holds if $s^-(G) \leq m$. Therefore, we may assume that $s^-(G) \geq m$.

Let  $\cE = \sum_{i = 1}^n \abs{\lambda_i}$ be the energy of the graph. Since the sum of the eigenvalues is equal to zero, we have
$$\sum_{i: \lambda_i > 0} \lambda_i = \sum_{i: \lambda_i < 0} (-\lambda_i) = \frac{\cE}{2}.$$
Thus by the Cauchy-Schwarz inequality, we have
$$s^+(G) \geq \sum_{i: \lambda_i > 0} \lambda_i^2 \geq \frac{1}{n} \left(\sum_{i: \lambda_i > 0} \lambda_i\right)^2 = \frac{\cE^2}{4n}.$$
We now use triangle-counting to produce another lower bound for $s^+(G)$. By assumption, we have
$$\sum_{i: \lambda_i < 0} (-\lambda_i)^2 \geq m.$$
By the Cauchy-Schwarz inequality, we get
$$\sum_{i: \lambda_i < 0} (-\lambda_i)^3 \geq \frac{\left(\sum_{i: \lambda_i < 0} (-\lambda_i)^2\right)^2}{\sum_{i: \lambda_i < 0} (-\lambda_i)} \geq \frac{2m^2}{\cE}.$$
Now recall that $\sum_{i = 1}^n \lambda_i^3$ is six times the number of triangles in $G$. In particular, it is non-negative. So we have
$$\sum_{i: \lambda_i > 0} \lambda_i^3 \geq \sum_{i: \lambda_i < 0} (-\lambda_i)^3 \geq \frac{2m^2}{\cE}.$$
All the positive eigenvalues of $G$ are at most $\lambda_1$.\footnote{We can improve this step by using the fact that all positive eigenvalues of $G$ aside from $\lambda_1$ are at most $\lambda_2$. Doing so can improve the bounds on $\lambda_2$, potentially recovering the results in \cite{RST23} when $k \ll n^{3/4}$.} Therefore, we have
$$s^+(G) = \sum_{i: \lambda_i > 0} \lambda_i^2 \geq \frac{1}{\lambda_1} \sum_{i: \lambda_i > 0} \lambda_i^3 \geq \frac{2m^2}{\cE \lambda_1}.$$
Thus we obtain
$$s^+(G) \geq \max\left(\frac{2m^2}{\cE \lambda_1}, \frac{\cE^2}{4n}\right).$$
\cref{thm:main-3} follows by taking the geometric mean of the two bounds
$$s^+(G) \geq \left(\frac{2m^2}{\cE \lambda_1}\right)^{2/3} \left(\frac{\cE^2}{4n}\right)^{1/3} \geq \frac{m^{4/3}}{n^{1/3} \lambda_1^{2/3}}.$$
\end{proof}
\begin{proof}[Proof of \cref{cor:main-3-1}]
    We take the geometric average of \cref{thm:main-3} with the bound $s^+(G) \geq \lambda_1^2$ to obtain
    $$s^+(G) \geq \left(\frac{m^{4/3}}{n^{1/3} \lambda_1^{2/3}}\right)^{3/4} (\lambda_1^2)^{1/4} \geq \frac{m}{n^{1/4}} \geq \frac{s^-(G)}{2n^{1/4}}$$
    where we use the relation $s^-(G) \leq s^-(G) + s^+(G) = 2m$ in the last inequality.
\end{proof}
\begin{proof}[Proof of \cref{cor:main-3-1-regular}]
For a $k$-regular graph $G$, we have $\lambda_1 = k$ and $m = kn / 2$. Substituting these parameters into \cref{thm:main-3} gives $s^+(G) \geq (k/4)^{2/3}n$, as desired.
\end{proof}
\begin{proof}[Proof of \cref{cor:alon-boppana-irregular}]
    We have $s^+(G) \leq \lambda_1^2 + n \lambda_2^2$. By \cref{thm:main-3}, we obtain
    $$\lambda_2^2 \geq \frac{1}{n} \left(\frac{m^{4/3}}{n^{1/3} \lambda_1^{2/3}} - \lambda_1^2\right) \geq \Omega\left(\frac{m^{4/3}}{n^{4/3} \lambda_1^{2/3}}\right)$$
    as desired.
\end{proof}
\section{Surplus}
\label{sec:surplus}
The proof of \cref{thm:main-4} returns to the theme of optimization programs. In contrast to \cref{lem:energy-sdp}, we will write $s^+(G)$ and $s^-(G)$ as the objective of a maximization program, then substitute in an appropriate matrix to obtain \cref{thm:main-4}.
\begin{lemma}
    \label{lem:energy-sdp-2}
    Let $G$ be any graph. Then we have 
    $$s^+(G) = \max_{M \in \cS_V \backslash \{0\}} \frac{\max(\langle A_G, M \rangle, 0)^2}{\langle M, M \rangle}.$$
    and
    $$s^-(G) = \max_{M \in \cS_V \backslash \{0\}} \frac{\max(-\langle A_G, M \rangle, 0)^2}{\langle M, M \rangle}.$$
\end{lemma}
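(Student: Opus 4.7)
The plan is to view this lemma as a companion to \cref{lem:energy-sdp}: instead of expressing $s^{\pm}(G)$ as a minimum, we now express them as a Cauchy--Schwarz ratio. The two key ingredients are already assembled in \cref{sec:prelim}: the spectral decomposition $A_G = A_G^+ - A_G^-$ with $\langle A_G^+, A_G^- \rangle = 0$, and the basic fact that $\langle X, Y \rangle \geq 0$ whenever $X, Y \in \cS_V$ (which follows from writing $X = P^TP$ and $Y = Q^TQ$, so $\tr(XY) = \norm{PQ^T}^2 \geq 0$).

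To prove the inequality $s^+(G) \geq \max(\langle A_G, M \rangle, 0)^2 / \langle M, M \rangle$ for any nonzero $M \in \cS_V$, I would first write
$$\langle A_G, M \rangle = \langle A_G^+, M \rangle - \langle A_G^-, M \rangle \leq \langle A_G^+, M \rangle,$$
using $\langle A_G^-, M \rangle \geq 0$. Then the Cauchy--Schwarz inequality in the Frobenius inner product gives
$$\langle A_G^+, M \rangle^2 \leq \langle A_G^+, A_G^+ \rangle \, \langle M, M \rangle = s^+(G) \, \langle M, M \rangle.$$
Combining these two displays and noting that if $\langle A_G, M \rangle < 0$ the bound is trivial, we obtain the upper bound. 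For equality, I would exhibit $M = A_G^+$: this lies in $\cS_V$, and $\langle A_G, A_G^+ \rangle = \langle A_G^+, A_G^+ \rangle - \langle A_G^-, A_G^+ \rangle = s^+(G)$ since the two PSD pieces are orthogonal, so the ratio evaluates to $s^+(G)^2 / s^+(G) = s^+(G)$.

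The proof of the $s^-(G)$ identity is entirely symmetric: swap the roles of $A_G^+$ and $A_G^-$. Explicitly, $-\langle A_G, M \rangle = \langle A_G^-, M \rangle - \langle A_G^+, M \rangle \leq \langle A_G^-, M \rangle$, then apply Cauchy--Schwarz on $\langle A_G^-, M \rangle$, and take $M = A_G^-$ to achieve equality.

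There is no substantial obstacle here; the argument is a direct two-line Cauchy--Schwarz once the orthogonal decomposition $A_G = A_G^+ - A_G^-$ is in hand. The only point worth being careful about is the $\max(\cdot, 0)$ in the numerator, which is needed because $\langle A_G, M \rangle$ can be negative (in which case the right-hand side is $0$ and the inequality is vacuous), whereas the equality-attaining choice $M = A_G^+$ makes $\langle A_G, M \rangle$ positive. A brief remark on this asymmetry between $s^+$ and $s^-$ will make the statement easier to interpret when it is used in the next section to establish \cref{thm:main-4}.
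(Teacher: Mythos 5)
Your proof is correct, but it follows a genuinely different route from the paper. The paper obtains the maximization formula as a corollary of \cref{lem:energy-sdp}: it writes $s^+(G) = 2m - s^-(G) = \max_{M \in \cS_V}\bigl(2m - \norm{A_G - M}^2\bigr) = \max_{M \in \cS_V}\bigl(2\langle M, A_G\rangle - \langle M, M\rangle\bigr)$, and then uses the fact that $\cS_V$ is a cone to optimize over a scaling $\alpha > 0$ of $M$, which is exactly what produces the ratio with the $\max(\cdot,0)^2$ numerator. You instead argue directly from the orthogonal decomposition $A_G = A_G^+ - A_G^-$: the upper bound via $\langle A_G, M\rangle \leq \langle A_G^+, M\rangle$ (nonnegativity of the inner product of two PSD matrices) followed by Cauchy--Schwarz, and attainment via $M = A_G^+$ (resp.\ $M = A_G^-$). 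Both arguments ultimately rest on the same two facts --- $\langle A_G^+, A_G^-\rangle = 0$ and $\langle X, Y\rangle \geq 0$ for $X, Y \in \cS_V$ --- but yours is self-contained and does not need the trace identity $s^+(G) + s^-(G) = 2m$ or \cref{lem:energy-sdp}, while the paper's derivation makes transparent that the max program is the homogenized dual form of the min program, which is conceptually tidy given that both lemmas are used side by side. One cosmetic point: if $G$ has no edges then $A_G^+ = 0$ (resp.\ $A_G^- = 0$) is not an admissible witness in $\cS_V \backslash \{0\}$; in that degenerate case both sides of the identity are $0$, so the statement still holds, but your equality step should acknowledge it (the paper's version has an analogous harmless technicality, since for $\langle M, A_G\rangle \leq 0$ the optimum over $\alpha > 0$ is only a supremum).
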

\begin{proof}
    We only prove the result for $s^+(G)$; the proof for $s^-(G)$ is symmetric. Let $m$ be the number of edges in $G$. By \cref{lem:energy-sdp}, we have
    $$s^+(G) = 2m - s^-(G) = \max_{M \in \cS_V} 2m - \norm{A_G - M}^2 = \max_{M \in \cS_V} 2 \langle M, A_G \rangle - \langle M, M \rangle.$$
    Since $\cS_V$ is a cone, we have $M \in \cS_V$ implies $\alpha M \in \cS_V$ for every $\alpha > 0$. Therefore, we have
    $$s^+(G) = \max_{M \in \cS_V \backslash \{0\}}  \max_{\alpha > 0} 2 \alpha\langle M, A_G \rangle - \alpha^2\langle M, M \rangle = \max_{M \in \cS_V \backslash \{0\}} \frac{\max(\langle A_G, M \rangle, 0)^2}{\langle M, M \rangle}$$
    as desired.
\end{proof}
We also need a fact about the adjacency matrix of bipartite graphs.
\begin{lemma}
\label{lem:bipartite-adjacency-entries}
For a bipartite graph $H$ with parts $X$ and $Y$, and any $x \in X, y \in Y$, the entry of $A_H^+$ at indices $(x, y)$ is equal to $\frac{1}{2}$ if $\{x, y\} \in E(H)$, and $0$ otherwise. The entry of $A_H^-$ at indices $(x, y)$ is equal to $-\frac{1}{2}$ if $\{x, y\} \in E(H)$, and $0$ otherwise.
\end{lemma}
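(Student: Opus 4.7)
The plan is to exploit the sign-flipping symmetry that a bipartite graph's adjacency matrix exhibits. Let $D$ denote the diagonal matrix with $D_{vv} = +1$ for $v \in X$ and $D_{vv} = -1$ for $v \in Y$. The bipartite structure gives the identity $D A_H D = -A_H$, because every edge of $H$ joins $X$ to $Y$, so conjugation by $D$ flips the sign of every nonzero entry. This single identity will drive the entire argument.

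Next I would leverage the uniqueness of the decomposition $A_H = A_H^+ - A_H^-$ into the difference of two positive semidefinite matrices with orthogonal ranges. Conjugating by $D$ gives $-A_H = (D A_H^+ D) - (D A_H^- D)$, and since $D$ is an orthogonal involution, both $D A_H^+ D$ and $D A_H^- D$ are positive semidefinite with orthogonal ranges (orthogonality is preserved because $\langle D A_H^+ D, D A_H^- D\rangle = \tr(D A_H^+ D D A_H^- D) = \tr(A_H^+ A_H^-) = 0$). By the uniqueness of such a decomposition, one concludes
\begin{equation*}
D A_H^+ D = A_H^- \quad \text{and}\quad D A_H^- D = A_H^+.
\end{equation*}

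Reading off the $(x,y)$ entry for $x \in X, y \in Y$, this says $(A_H^-)_{xy} = D_{xx} (A_H^+)_{xy} D_{yy} = -(A_H^+)_{xy}$. Combined with $(A_H)_{xy} = (A_H^+)_{xy} - (A_H^-)_{xy} = 2(A_H^+)_{xy}$, we immediately deduce $(A_H^+)_{xy} = \tfrac{1}{2}(A_H)_{xy}$ and $(A_H^-)_{xy} = -\tfrac{1}{2}(A_H)_{xy}$, which is exactly the stated claim.

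The only subtle point is justifying uniqueness of the decomposition $A = P - N$ into orthogonal-range positive semidefinite parts; this follows from the spectral theorem applied to $A_H$, since $A_H^+$ is the projection of $A_H$ onto its nonnegative spectral subspace and is the unique such object. I expect this to be routine rather than an obstacle, so the proof should be short. An alternative route, which one could also present, is to work directly with the block SVD $A_H = \begin{pmatrix} 0 & B \\ B^T & 0 \end{pmatrix}$ and pair each singular triple $(\sigma, u, v)$ into eigenvectors $\tfrac{1}{\sqrt{2}}(u, \pm v)^T$ of eigenvalues $\pm \sigma$, but the symmetry argument above is cleaner and avoids any computation.
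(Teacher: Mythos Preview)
Your proof is correct and is essentially the same argument as the paper's: both exploit the bipartite sign-flip symmetry (your conjugation by $D$ is exactly the paper's operation of negating the $Y$-coordinates of each eigenvector) to deduce $(A_H^-)_{xy} = -(A_H^+)_{xy}$, and then combine this with $A_H^+ - A_H^- = A_H$. The only cosmetic difference is that you invoke uniqueness of the PSD decomposition abstractly, whereas the paper writes out the eigenvector sums explicitly.
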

\begin{proof}
Let $\lambda_1, \cdots, \lambda_k$ be the positive eigenvalues of $H$, and let $v_1, \cdots, v_k$ be the corresponding unit eigenvectors. Then we have
$$A^+_H = \sum_{i = 1}^k \lambda_i v_iv_i^T.$$
Let $w_i$ be the vector formed from $v_i$ by negating the entries with indices in $Y$. Then $w_i$ is the unit eigenvector corresponding to $-\lambda_i$, so we have
$$A^-_H = \sum_{i = 1}^k \lambda_i w_iw_i^T.$$
Comparing the entry at $(x, y)$, we observe that
$$(A^-_H)_{x, y} = -(A^+_H)_{x, y}.$$
On the other hand, we have
$$A^+_H - A^-_H = A_H.$$
So we conclude that
$$(A^+_H)_{x, y} = -(A^-_H)_{x, y} = \frac{1}{2} (A_H)_{x, y}$$
as desired.
\end{proof}
We can now prove \cref{thm:main-4}, the result on surplus.
\begin{proof}[Proof for \cref{thm:main-4}]
Let $H$ be the largest bipartite subgraph in $G$, with $k \geq m/2$ edges. Assume that its two parts are $U$ and $W = V \backslash U$. Then the edge set of $G$ is partitioned between the edge sets of $G[U]$, $G[W]$, and $H$, so
$$A_G = A_H + A_{G[U] \sqcup G[W]}.$$
For $s^+(G)$, we take $M = A_H^+$. As $H$ is bipartite and has $k$ edges, we have $\langle A_H, A_H^+ \rangle = \langle A_H^+, A_H^+ \rangle = s^+(H) = k$. Thus we obtain
$$\langle A_G, A_H^+ \rangle = k + \langle A_{G[U] \sqcup G[W]}, A_H^+\rangle.$$
Let $B_H$ be the $V \times V$ matrix obtained by replacing the entries of $A^+_H$ in $U \times W$ and $W \times U$ with zero. By the Cauchy-Schwarz inequality, we have
$$\langle A_{G[U] \sqcup G[W]}, A_H^+\rangle = \langle A_{G[U] \sqcup G[W]}, B_H\rangle \geq - \sqrt{\langle A_{G[U] \sqcup G[W]}, A_{G[U] \sqcup G[W]} \rangle \langle B_H, B_H \rangle}.$$
We have $\langle A_{G[U] \sqcup G[W]}, A_{G[U] \sqcup G[W]} \rangle = 2(m - k)$. By \cref{lem:bipartite-adjacency-entries}, we also have
$$\langle B_H, B_H \rangle = \langle A_H^+, A_H^+ \rangle - 2\sum_{u \in U, w \in W} (A_H^+)_{u, w}^2 = k - \frac{k}{2} = \frac{k}{2}.$$
So we obtain
$$\langle A_{G[U] \sqcup G[W]}, A_H^+\rangle \geq -\sqrt{(m - k)k}$$
and
$$\langle A_G, A_H^+ \rangle \geq k - \sqrt{(m - k)k} = \frac{(2k - m) \sqrt{k}}{\sqrt{k} + \sqrt{m - k}} \geq \frac{2k - m}{2} = \surp(G).$$
By \cref{lem:energy-sdp-2}, we conclude that
$$s^+(G) \geq \frac{\langle A_G, A_H^+ \rangle^2}{\langle A_H^+, A_H^+ \rangle} = \frac{ \surp(G)^2}{k} \geq \frac{\surp(G)^2}{m}$$
as desired. The proof for $s^-(G)$ proceeds verbatim, except we take $M = A_H^-$ instead, and prove
$$\langle A_G, A_H^- \rangle = -k + \langle A_{G[U] \sqcup G[W]}, A_H^-\rangle \leq -k+\sqrt{(m - k)k}.$$
\end{proof}
Finally, we are ready to tackle \cref{thm:main-result-2}. The strategy is as follows: we partition the graph into $O(\log\log m)$ groups of vertices with similar degrees. If there are a lot of edges inside any of these groups, we can use \cref{thm:main-3} to finish. Otherwise, there are few edges inside each groups, so there must be many edges between two of the groups. In this case, we finish by applying \cref{thm:main-4} to the subgraph induced by these two groups.
\begin{proof}[Proof of \cref{thm:main-result-2}]
Let $G$ be a graph with $m$ edges. Without loss of generality, we may assume that $G$ has no isolated vertices. By \cref{cor:main-1-1}, we have $s^-(G) \geq n / 2 = \Omega(m^{1/2})$. We next prove that 
$$s^+(G) = \Omega\left(\frac{m^{6/7}}{(\log\log m)^{8/3}}\right)$$
We divide the vertices into subsets $V_0, V_1, \cdots, V_{k-1}$, where $V_0$ is the set of vertices with degree at least $0.5m^{3/7}$, and for each $i \geq 1$, $V_i$ is the set of vertices with degree in the interval $\left[2^{-2^i}m^{3/7}, 2^{-2^{i - 1}}m^{3/7}\right)$. We observe that $k = O(\log \log m)$. Since the sum of degrees is equal to $2m$, we have
$$\abs{V_0} \leq \frac{2m}{0.5 m^{3/7}} \leq 4m^{4/7},$$
$$\abs{V_i} \leq \frac{2m}{2^{-2^i}m^{3/7}} \leq 2^{2^i + 1} m^{4/7}, \forall i \geq 1.$$
Let $m_i$ denote the number of edges in $G[V_i]$, and let $m_{i, j}$ denote the number of edges with one endpoint in $V_i$ and the other in $V_j$. We divide into cases based on the sizes of $m_i$ and $m_{i, j}$.

\textbf{Case 1: }We have $m_0 \geq \frac{m}{2k^2}$. By \cref{cor:main-3-1}, we obtain
$$s^+(G[V_0]) \geq \frac{m_0}{2 \abs{V_0}^{1/4}} = \Omega\left(\frac{m}{k^2 \cdot (m^{4/7})^{1/4}}\right) = \Omega\left(\frac{m^{6/7}}{k^2}\right).$$
\textbf{Case 2: }We have $m_i \geq \frac{m}{2k^2}$ for some $i \geq 1$. Observe that $\lambda_1(G[V_i])$ is at most the maximum degree of $G[V_i]$, which is at most $2^{-2^{i - 1}}m^{3/7}$. By \cref{thm:main-3}, we obtain
$$s^+(G[V_i]) \geq \frac{m_i^{4/3}}{\abs{V_i}^{1/3} \lambda_1(G[V_i])^{2/3}} \geq \frac{m^{4/3}}{4k^{8/3} \left(2^{2^i + 1} m^{4/7}\right)^{1/3} \left(2^{-2^{i - 1}}m^{3/7}\right)^{2/3}} = \Omega\left(\frac{m^{6/7}}{k^{8/3}}\right).$$
\textbf{Case 3: }We have $m_i < \frac{m}{2k^2}$ for every $i \geq 0$. Observe that
$$\sum_{i = 0}^{k-1} m_i + \sum_{0 \leq i < j < k} m_{i, j} = m.$$
Therefore, there exists some $0 \leq i < j < k$ such that $m_{i, j} \geq \frac{2m}{k^2}$. So the surplus of the graph $G[V_i \sqcup V_j]$ is at least
$$\surp(G[V_i \sqcup V_j]) \geq \frac{1}{2}(m_{i, j} - m_i - m_j) \geq \frac{m}{2k^2}.$$
So by \cref{thm:main-4}, we obtain
$$s^+(G[V_i \sqcup V_j]) \geq \frac{\surp(G[V_i \sqcup V_j])^2}{m} \geq \frac{m}{4k^4}.$$
In all cases, there exists some induced subgraph $G'$ of $G$ such that
$$s^+(G') = \Omega\left(\frac{m^{6/7}}{k^{8/3}}\right).$$
By \cref{thm:main-1}, we conclude that
$$s^+(G) \geq s^+(G') = \Omega\left(\frac{m^{6/7}}{(\log\log m)^{8/3}}\right)$$
completing the proof of \cref{thm:main-result-2}.
\end{proof}

\section{Concluding Remarks}
\label{sec:conclusion}
While our tools bring us closer to \cref{conjecture:EFGW}, there are certain graphs for which the conjecture still seems out of reach. A concrete example which we can study next is as follows.
\begin{conjecture}
    Let $G$ be the unicyclic graph obtained by attaching three trees to the three vertices of $K_3$. Then $\min(s^+(G), s^-(G)) \geq n - 1$.
\end{conjecture}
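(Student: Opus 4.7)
The plan is to prove the conjecture by strong induction on $n$, reducing whenever possible to a smaller instance of the same family. The base case $n = 3$ is $G = K_3$, where $s^-(G) = 2 = n - 1$ and $s^+(G) = 4$. For the inductive step, assume the conjecture for all such graphs with fewer than $n$ vertices, and suppose $n \geq 4$. Then at least one of the three trees, say $T_a$, contains a leaf $\ell$ distinct from the triangle vertex $a$.

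The first attempt is to produce an induced $P_3$ containing $\ell$ and apply \cref{thm:main-2}. If $\ell$ is adjacent to $a$ we take $U = \{\ell, a, b\}$; otherwise we let $v$ be the unique neighbor of $\ell$ inside $T_a$ and $w$ the next vertex on the path from $\ell$ to $a$, and take $U = \{\ell, v, w\}$. In either case $G[U] \cong P_3$, so \cref{thm:main-2} yields some $u \in U$ with $s^-(G) > s^-(G \setminus \{u\}) + 1$. In the favorable case $u = \ell$, the graph $G \setminus \{\ell\}$ is still a triangle with three trees attached on $n - 1$ vertices, so the inductive hypothesis gives $s^-(G \setminus \{\ell\}) \geq n - 2$, and hence $s^-(G) > n - 1$. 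The argument for $s^+(G)$ is identical.

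The main obstacle is that \cref{thm:main-2} does not let us choose $u$. When $u$ is a triangle vertex or an internal tree vertex, $G \setminus \{u\}$ decomposes into a main component (a forest, or a smaller triangle-plus-trees graph) together with $k \geq 1$ stranded subtrees; \cref{cor:main-1-restate} combined with the inductive hypothesis (or the tree identity $s^{\pm}(T) = |V(T)| - 1$) then yields only $s^-(G \setminus \{u\}) \geq n - 2 - k$, so \cref{thm:main-2} improves this merely to $s^-(G) > n - 1 - k$, short of the bound when $k \geq 1$. The canonical obstruction is the generalized lollipop, i.e., $K_3$ with three pendant paths, where every non-leaf choice of $u$ strands at least one component.

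To overcome this I envision two complementary refinements. First, apply the key inequality of \cref{lem:3by3} to several $P_3$'s simultaneously, sharing the same optimal $M$ from \cref{lem:energy-sdp}. Using the two-vertex identity $\norm{X}^2 = \norm{X|_{-\{u_1, u_2\}}}^2 + s_{u_1}(X) + s_{u_2}(X) - 2 X_{u_1 u_2}^2$ for symmetric $X$, together with a bound on the cross-term $(A_G - M)_{u_1 u_2}^2$, one should extract a combined surplus exceeding $2$ per pair of removals, and iterate to match the number of stranded components. Second, verify the conjecture directly for generalized lollipops via the two-term recurrence for characteristic polynomials under pendant-leaf deletion; numerical evidence for small cases suggests $s^-(G)$ exceeds $n - 1$ by a uniformly positive amount for this family. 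Combining the lollipop base case with super-additivity and the refined removal argument should then close the general case. The hardest step, in my view, will be merging these ingredients into a clean recursive framework without a case explosion over the branching patterns of the three trees.
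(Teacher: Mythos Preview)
This statement is a \emph{conjecture} in the paper, not a theorem: the paper offers no proof and explicitly notes that \cref{thm:main-2} ``does not help in this case since the removal of any non-leaf vertex disconnects the graph.'' So there is no paper proof to compare against.

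Your proposal correctly reproduces the natural inductive attack and arrives at precisely the obstruction the paper identifies. The inductive step works only when the vertex $u$ produced by \cref{thm:main-2} happens to be the leaf $\ell$; for any other choice of $u$, deletion strands subtrees and the super-additivity bound loses one unit per stranded component. You acknowledge this honestly.

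The genuine gap is that neither of your two proposed refinements is actually carried out, and both face real difficulties. For the first, applying \cref{lem:3by3} to several induced $P_3$'s simultaneously against the single optimizer $M$ is appealing, but the cross-term $(A_G - M)_{u_1 u_2}^2$ you need to control has no a priori bound; the entries of $A_G^+$ off the adjacency support can be arbitrary, and there is no mechanism in the paper's framework to bound them. For the second, the recurrence for pendant-leaf deletion gives the characteristic polynomial, but extracting a uniform lower bound on $s^-$ from it for all generalized lollipops is itself a nontrivial spectral computation you have not done; ``numerical evidence for small cases'' is not a proof. Even granting both pieces, the sketch of how to combine them into a clean induction is missing.

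In short: you have located the right obstacle, but the proposal remains a plan rather than a proof, and the conjecture is open in the paper as well.
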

\cref{thm:main-2} does not help in this case since the removal of any non-leaf vertex disconnects the graph. Perhaps we can try some different graph operations that don't affect connectedness, such as contracting an edge. 

We consider another family of graphs, which are far from unicyclic, for which \cref{conjecture:EFGW} seems hard. We take a cycle on $k$ vertices, and glue a disjoint copy of $K_3$ on each vertex. We call the resulting graph $C_k^3$. 

\begin{figure}[h]
    \centering
    \includegraphics[width=0.5\linewidth]{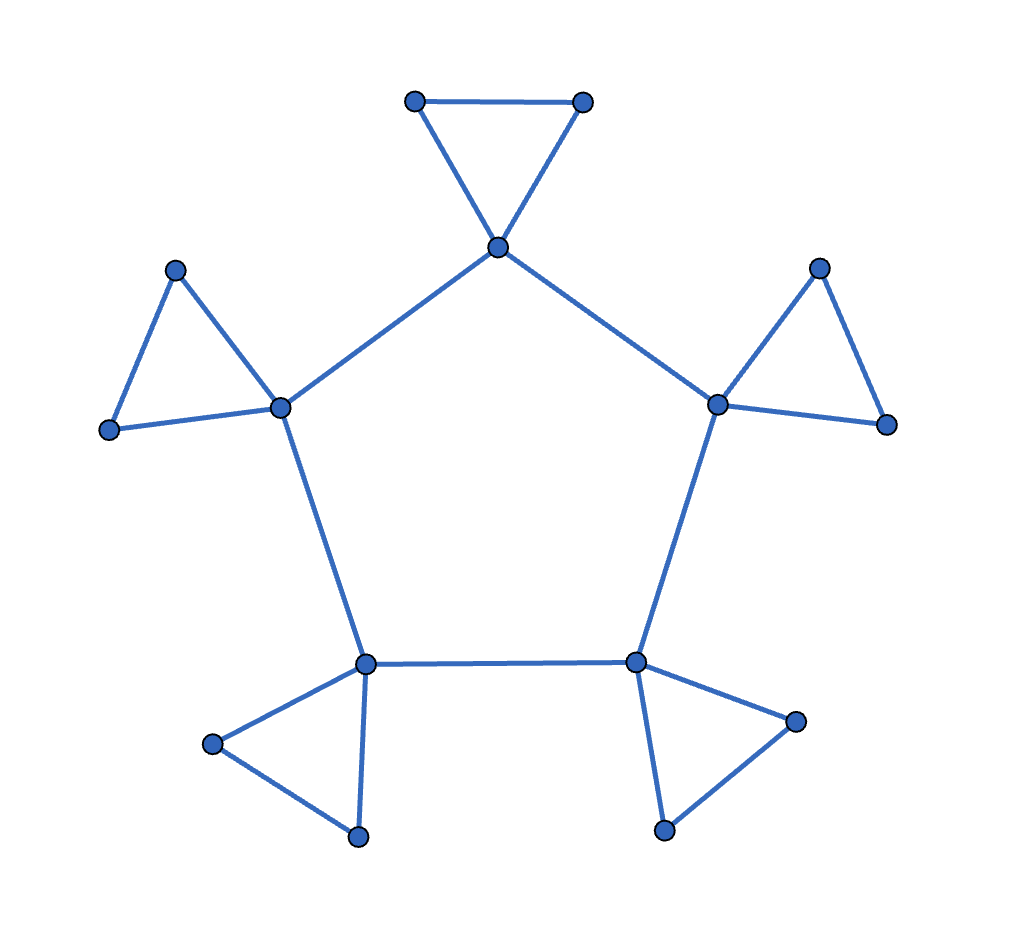}
    \caption{The graph $C_5^3$. Its negative square energy is $15.76$.}
    \label{fig:1}
\end{figure}

This graph has $n = 3k$ vertices and $m = 4k$ edges. By partitioning the graph into triangles, \cref{cor:main-1-restate} gives $s^+(C_k^3) \geq 4k$, but only gives $s^-(C_k^3) \geq 2k$.
Its clique number is $3$, so the bound of Ando-Lin gives us $\min(s^+(C_k^3), s^-(C_k^3)) \geq \frac{8k}{3} = \frac{8n}{9}$. Numerical evidence suggests that $s^-(C_k^3)$ is very close to $n$. For example, we have $s^-(C_{33}^3) \approx 104$. So \cref{conjecture:EFGW} holds just barely.

Nevertheless, \cref{thm:main-1} and \cref{thm:main-2} do put strong structural restrictions on any potential minimum counterexample. We observe that
\begin{proposition}
    Let $G$ be a counterexample to \cref{conjecture:EFGW} with the minimum number of vertices. Then $G$ must satisfy the following properties.

    (*): For any set of three vertices $U$ such that $G[U]$ is isomorphic to $P_3$, there exists a vertex $u \in U$ such that $G \backslash \{u\}$ is disconnected.

    (**): For any vertex subset $U$, if $G[U]$ is a bipartite graph with at least $\abs{U}$ edges, then $G \backslash U$ is disconnected.
\end{proposition}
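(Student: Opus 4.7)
The plan is to prove both properties by contradiction, invoking the minimality of $G$ as a counterexample. By minimality, every connected graph on fewer than $n$ vertices satisfies \cref{conjecture:EFGW}, i.e.\ has $\min(s^+, s^-)$ at least one less than its vertex count. Property (*) will then follow from \cref{thm:main-2}, and property (**) from \cref{thm:main-1}.

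For (*), I would suppose toward contradiction that there is an induced copy of $P_3$ on a vertex set $U$ such that $G \setminus \{u\}$ is connected for every $u \in U$. Applying \cref{thm:main-2} (once for $s^-$ and once for $s^+$), we obtain vertices $u_-, u_+ \in U$ (possibly distinct) satisfying
\[
s^-(G) > s^-(G \setminus \{u_-\}) + 1 \quad \text{and} \quad s^+(G) > s^+(G \setminus \{u_+\}) + 1.
\]
Since $G \setminus \{u_-\}$ and $G \setminus \{u_+\}$ are each connected graphs on $n - 1$ vertices, the minimality hypothesis yields $\min(s^+, s^-)(G \setminus \{u_\pm\}) \geq n - 2$. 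Hence $\min(s^+(G), s^-(G)) > n - 1$, contradicting that $G$ is a counterexample.

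For (**), I would similarly suppose toward contradiction that some vertex subset $U$ has $G[U]$ bipartite with $e(G[U]) \geq |U|$ and $G \setminus U$ connected. By \cref{thm:main-1}, we have $s^{\pm}(G) \geq s^{\pm}(G[U]) + s^{\pm}(G \setminus U)$; since $G[U]$ is bipartite, $s^{\pm}(G[U]) = e(G[U]) \geq |U|$. If $G \setminus U$ is empty then $|U| = n$ and we obtain $\min(s^+(G), s^-(G)) \geq n$ directly. Otherwise $G \setminus U$ is a connected graph on $n - |U| < n$ vertices, so by minimality $\min(s^+, s^-)(G \setminus U) \geq n - |U| - 1$; adding the two bounds gives $\min(s^+(G), s^-(G)) \geq n - 1$, again a contradiction.

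Neither step should present a serious obstacle, as the heavy lifting is already done by \cref{thm:main-1} and \cref{thm:main-2}; one just needs to confirm that the residual subgraph is connected (exactly what we negated) and non-empty, so that the inductive minimality hypothesis applies. The only mild subtlety is the degenerate case $U = V(G)$ in (**), which is trivially handled by the bipartite bound alone.
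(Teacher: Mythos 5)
Your proof is correct and follows exactly the route the paper intends: the proposition is stated there as an immediate consequence of \cref{thm:main-1} and \cref{thm:main-2} applied to a minimal counterexample, which is what you carry out (including the careful handling of possibly distinct vertices $u_-$, $u_+$ and the degenerate case $U = V$).
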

Thus, it suffices to prove \cref{conjecture:EFGW} for all graphs that satisfy (*) and (**). It seems interesting to determine what these graphs can be.
\begin{problem}
    Classify all connected graphs with properties (*) and (**).
\end{problem}
As a strengthening of \cref{thm:main-result-1-weak}, Elphick communicated the following conjecture, which could serve as a next goal towards the solution of \cref{conjecture:EFGW} .
\begin{conjecture}[Elphick]
\label{conj:elphick}
Let $G$ be a graph with no isolated vertex. Let $(n^+, n^0, n^-)$ be the inertia of $A_G$. We have 
$$\min(s^+(G), s^-(G)) \geq n^0 + \max(n^+, n^-).$$
\end{conjecture}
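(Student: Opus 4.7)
Since $n^+ + n^0 + n^- = n$, \cref{conj:elphick} can be rewritten as $\min(s^+(G), s^-(G)) \geq n - \min(n^+, n^-)$. Without loss of generality I assume $n^+ \leq n^-$, so the goal becomes $\min(s^+(G), s^-(G)) \geq n - n^+$. When $n^0 = 0$ we have $n - n^+ = n^- = \max(n^+, n^-)$, and the conjecture is already implied by \cref{thm:main-result-1-weak}. Hence the essential new content lies in the case $n^0 > 0$, where the existing inertia bound must be improved by exactly $n^0$.

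The plan is to strengthen both ingredients used in the proof of \cref{thm:main-result-1}. First, I would upgrade the dominating-vertex base case \cref{cor:main-2-1} to the statement $\min(s^+(G), s^-(G)) \geq n^0(G) + \max(n^+(G), n^-(G))$ for any graph $G$ with a dominating vertex. As in the proof of that corollary, the vertex-removal reduction via \cref{thm:main-2} reduces the problem to the endgame \cref{lem:universal-endgame}, where $G$ is the join $K_1 + (K_{c_1} \sqcup \cdots \sqcup K_{c_k})$. The spectrum of such a join consists of $(n - 1 - k)$ copies of $-1$ together with the eigenvalues of an explicit $(k+1)\times(k+1)$ symmetric matrix in the basis $\{\hat e_{v_*}, c_i^{-1/2}\mathbf{1}_{\mathcal{C}_i}\}$, so both $s^{\pm}(G)$ and the inertia can be computed and compared directly, hopefully yielding the desired inequality with room to spare outside the known equality cases $K_{1,n-1}$ and $K_n$. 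Second, once the strengthened base case is in hand, the global step is to combine it across the dominating-set decomposition used in \cref{thm:main-result-1}; that is, to lower-bound $n^0(G) + \max(n^+(G), n^-(G))$ by $\sum_i \bigl(n^0(G[V_i]) + \max(n^+(G[V_i]), n^-(G[V_i]))\bigr)$ and then apply \cref{thm:main-1}.

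The main obstacle will be this final inertia-comparison step, since inertia parameters are not additive across induced-subgraph decompositions: the nullity of $A_G$ may exceed or fall short of the sum of nullities of the pieces $G[V_i]$, depending on how those pieces interact through the cross-blocks of $A_G$. Overcoming this will likely require either a genuinely new combinatorial gadget, perhaps guided by Haynsworth's Schur-complement inertia additivity applied to the block structure of $A_G$, or a refinement of the super-additivity \cref{thm:main-1} that simultaneously tracks nullity alongside $s^{\pm}$. A natural first attempt is to restrict attention to decompositions whose off-diagonal block has small rank, where the Schur complement controls the inertia gap precisely; but arranging for such a favorable decomposition to exist for every graph without isolated vertex seems substantial, and may be where a genuinely new idea is needed. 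Failing that, a fallback is to work entirely within the SDP framework of \cref{lem:energy-sdp-2} and use a witness matrix of the form $A_G^+ + c P_0$, with $P_0$ the projection onto $\ker A_G$ and $c$ tuned to extract the additional $n^0$ margin, though making this directly yield the conjectured bound appears delicate.
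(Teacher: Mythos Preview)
There is no proof in the paper for you to be compared against: \cref{conj:elphick} is stated in the concluding-remarks section as an \emph{open conjecture} communicated by Elphick, explicitly flagged as a ``next goal'' beyond \cref{thm:main-result-1-weak}. The paper moreover records Elphick's observation that $n-\gamma(G)\geq n^0+\max(n^+,n^-)$ can fail, so the entire dominating-set machinery of \cref{thm:main-result-1} is already known not to reach this statement.

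Your proposal correctly identifies the combining step as the obstacle, but it is worth seeing that it fails \emph{already for the specific dominating-set decomposition you intend to use}, not merely for arbitrary partitions. Take $G=C_4$, with spectrum $\{2,0,0,-2\}$, so $(n^+,n^0,n^-)=(1,2,1)$ and the conjectured target is $n^0+\max(n^+,n^-)=3$. A minimum dominating set is $\{1,3\}$, giving the decomposition $V_1=\{1,2\}$, $V_2=\{3,4\}$ with $G[V_i]\cong K_2$. Each piece has inertia $(1,0,1)$, so $\sum_i\bigl(n^0(G[V_i])+\max(n^+(G[V_i]),n^-(G[V_i]))\bigr)=1+1=2<3$. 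Thus even with a perfectly sharp base case on each piece, super-additivity via \cref{thm:main-1} cannot recover the global target. Your reformulation makes the failure transparent: the needed inequality is $\min(n^+(G),n^-(G))\geq\sum_i\min(n^+(G[V_i]),n^-(G[V_i]))$, and here the left side is $1$ while the right side is $2$.

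On the positive side, your reduction in the base-case step is sound: Cauchy interlacing gives $n^{\pm}(G\backslash\{u\})\leq n^{\pm}(G)$, hence $\min(n^+,n^-)$ cannot increase upon deleting a vertex, so \cref{thm:main-2} does propagate the strengthened inequality exactly as you describe. The genuine missing idea is therefore entirely in the global step. Neither of your fallback suggestions resolves it as stated: the low-rank cross-block Schur-complement route would need a decomposition that simply does not exist for graphs like $C_4$, and the witness $A_G^++cP_0$ in \cref{lem:energy-sdp-2} satisfies $\langle A_G,P_0\rangle=0$, so adding $cP_0$ only inflates the denominator $\langle M,M\rangle$ without improving the numerator.
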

Elphick points out that the inequality $n - \gamma(G) \geq n^0 + \max(n^+, n^-)$ does not always hold, so \cref{thm:main-result-1} does not imply this conjecture. 

Regarding \cref{thm:main-result-2}, it would be nice to get rid of the double logarithm term.
\begin{conjecture}
If $G$ is a graph with $m$ edges, then we have
$$s^+(G) = \Omega(m^{6/7}).$$
\end{conjecture}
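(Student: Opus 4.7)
The plan is to sharpen \cref{thm:main-result-2} by avoiding the discrete doubly-exponential degree partition responsible for the $(\log\log m)^{8/3}$ inflation. That inflation arises in Case 3 of the proof, where pigeonholing $m$ edges among the $O(k^2)$ unordered pairs of degree classes forces $k = \Theta(\log\log m)$ in order for the triangle-counting and surplus bounds to balance. Removing the factor requires either a genuinely new inequality, a sharper form of \cref{thm:main-3}, or a continuous replacement of the partition.

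First I would reduce to the case $\lambda_1(G) \leq C\, m^{3/7}$ for a suitable constant $C$, since otherwise $s^+(G) \geq \lambda_1(G)^2 \geq m^{6/7}$ settles the conjecture. The main goal is then to locate a single induced subgraph $G' = G[V']$ that is spectrally balanced---meaning $\lambda_1(G') = O(2|E(G')|/|V'|)$---while retaining a constant fraction of the edges: $|E(G')| = \Omega(m)$. On such a $G'$, \cref{thm:main-3} directly gives
$$s^+(G') \geq \frac{|E(G')|^{4/3}}{|V'|^{1/3}\,\lambda_1(G')^{2/3}} = \Omega\left(\frac{|E(G')|}{\lambda_1(G')^{1/3}}\right) = \Omega(m^{6/7}),$$
and \cref{thm:main-1} then yields $s^+(G) \geq s^+(G') = \Omega(m^{6/7})$, completing the proof.

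The main obstacle is proving the existence of such a spectrally balanced induced subgraph. This is a strong purely structural statement and may well be false in this strict form: an irregular graph whose spectral radius is dictated by a single dense sub-cluster plus many pendant vertices need not admit an induced subgraph of constant edge density on which $\lambda_1$ drops all the way to the average degree. To circumvent this, one could relax ``induced subgraph'' to a weighted subgraph (chosen via vertex weights $w: V \to [0,1]$) and apply the SDP characterization \cref{lem:energy-sdp}, or use random vertex sampling so that the spectral regularity holds in expectation while $\Omega(m)$ edges are retained with positive probability.

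An alternative strategy is to refine \cref{thm:main-3} itself. As noted in the footnote of \cref{sec:triangle}, the bound $\sum_{\lambda_i > 0} \lambda_i^3 \leq \lambda_1 \cdot s^+(G)$ used there is wasteful because only the top eigenvalue actually equals $\lambda_1$; using instead $\sum_{\lambda_i > 0} \lambda_i^3 \leq \lambda_1^3 + \lambda_2(s^+(G) - \lambda_1^2)$, and more generally iterating across higher eigenvalues, may yield a form of \cref{thm:main-3} that is insensitive to degree irregularity and obviates the partition entirely. I expect this to be the more promising direction, since it builds directly on tools already present in the paper, but I suspect a full solution will ultimately require a new idea beyond either refinement.
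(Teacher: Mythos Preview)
The statement you address is an \emph{open conjecture} in the paper, listed in \cref{sec:conclusion} precisely because the paper does not know how to remove the $(\log\log m)^{8/3}$ loss from \cref{thm:main-result-2}. There is no proof in the paper against which to compare your proposal.

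Your submission is appropriately framed as a research plan rather than a proof, and you candidly flag the gaps yourself. The first route---finding an induced subgraph $G'$ with $|E(G')| = \Omega(m)$ and $\lambda_1(G') = O(2|E(G')|/|V'|)$---is, as you note, a strong structural statement that may simply be false; the weighted or random-sampling relaxations you mention are plausible workarounds but you do not carry them out. The second route---sharpening \cref{thm:main-3} via the $\lambda_2$ refinement in the footnote of \cref{sec:triangle}---is a natural idea, but iterating over higher eigenvalues does not obviously eliminate the dependence on the degree partition, and you do not indicate how the resulting inequality would close the gap. Both directions are sensible heuristics; neither is a proof, and the conjecture remains open both in the paper and in your proposal.
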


Finally, we consider the relationship between squared energy and the surplus. The work of R\"{a}ty-Sudakov-Tomon \cite{RST23} and \cref{thm:main-4} indicate some relationships between $s^{\pm}(G)$ and discrepancies, so it is of interest to explore this connection more thoroughly. I conjecture that \cref{thm:main-4} can be improved as follows.
\begin{conjecture}
    For any graph $G$, we have
    $$s^+(G) = \Omega(\surp(G)).$$
\end{conjecture}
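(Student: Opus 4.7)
The plan is to strengthen the semidefinite programming argument of \cref{thm:main-4} by choosing a better PSD test matrix in the variational formula
$$s^+(G) = \sup_{M \in \cS_V,\, M \neq 0} \frac{\langle A_G, M\rangle^2}{\langle M, M\rangle}$$
provided by \cref{lem:energy-sdp-2}. The choice $M = A_H^+$ used in \cref{thm:main-4} has $\norm{A_H^+}^2 = \Theta(m)$, which falls short of the target by a factor of $\surp(G)/m$ after Cauchy--Schwarz, so any proof of the conjecture must either inflate $\langle A_G, M\rangle$ or deflate $\norm{M}^2$.

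A natural candidate comes from the SDP dual of maxcut: by SDP duality and the Goemans--Williamson inequality $\mc(G) \geq 0.878\, \mc_{\mathrm{SDP}}(G)$, there exists a diagonal matrix $D$ satisfying $A_G + D \succeq 0$ and $\tr(D) = O(\surp(G) + m)$. Taking $M := A_G + D$ gives $\langle A_G, M\rangle = \norm{A_G}^2 + \langle A_G, D\rangle = 2m$ since $A_G$ has zero diagonal, and $\langle M, M\rangle = 2m + \norm{D}^2$. This yields
$$s^+(G) \geq \frac{4m^2}{2m + \norm{D}^2},$$
reducing the conjecture to proving $\norm{D}^2 = O(m^2/\surp(G))$ for some feasible $D$. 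For the extremal graphs of \cref{thm:quadrangle-construction}, the constant choice $D = \abs{\lambda_n(G)}\, I$ gives $\norm{D}^2 = n\,\lambda_n(G)^2 = \Theta(m^2/\surp(G))$, so the approach is tight at the known extremal point.

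For general $G$, I would upper-bound $\norm{D}^2$ by exploiting the PSD constraint: every $2 \times 2$ principal submatrix of $A_G + D$ at an edge $\{i,j\}$ is PSD, forcing $D_iD_j \geq 1$, so AM--GM yields $D_i + D_j \geq 2$ on each edge, hence $\sum_i d_i D_i \geq 2m$ where $d_i$ is the degree. Combined with $\tr(D)$ controlled by the dual certificate, this forces $D$ to spread out across the vertex set rather than concentrate on a few vertices. If this route is insufficient, I would fall back to a dyadic degree partitioning in the spirit of the proof of \cref{thm:main-result-2}: split $V(G)$ into $O(\log\log m)$ degree classes and apply \cref{thm:main-3} and \cref{thm:main-4} to each induced subgraph and bipartite cross-piece, then combine via super-additivity (\cref{thm:main-1}).

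The main obstacle is bridging the gap between the combinatorial optimum $\surp(G)$ and the spectral $L^2$ quantity $s^+(G)$: the Cauchy--Schwarz step in \cref{thm:main-4} loses a factor of $\sqrt{m/\surp(G)}$, and recovering this factor seems to require a case analysis based on whether $\surp(G)$ arises from a dense quasi-random substructure (handled by the SDP-dual approach above, which is tight on \cref{thm:quadrangle-construction}) or from sparser structures accessible to the triangle-counting method of \cref{thm:main-3}.
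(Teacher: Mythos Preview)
This statement appears in the paper's concluding remarks as an \emph{open conjecture}; the paper does not prove it and offers only heuristic evidence (tightness on the generalized-quadrangle graph and the shared lower bound $2m/\chi_v(G)$ with $\surp(G)$). So there is no proof in the paper to compare against, and any complete argument you produce would be new.

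Your proposal is an outline, not a proof, and you acknowledge as much. The genuine gap is the step ``$\norm{D}^2 = O(m^2/\surp(G))$ for some feasible diagonal $D$ with $A_G + D \succeq 0$''. The edge-wise AM--GM observation $D_i + D_j \geq 2$ gives only $\sum_i d_i D_i \geq 2m$, which together with $\tr(D) = O(m)$ does not control $\sum_i D_i^2$: nothing so far rules out a feasible $D$ that concentrates most of its trace on a small set of high-degree vertices, making $\norm{D}^2$ as large as $\Theta(m^2)$. Your fallback to dyadic degree partitioning plus \cref{thm:main-3} and \cref{thm:main-4} is exactly the machinery that in the paper yields only $s^+(G) = \Omega(m^{6/7-o(1)})$, and there is no indication why replacing $m$ by $\surp(G)$ in the target would make the bookkeeping close; in particular, $\surp(G)$ is not super-additive over vertex partitions, so you cannot localize it to a single piece the way you localize edge counts.

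A minor point: the variational formula you quote drops the $\max(\cdot,0)$ from \cref{lem:energy-sdp-2}. As stated your formula is false (taking $M$ supported on the negative eigenspace can make the ratio exceed $s^+(G)$), but since your chosen $M = A_G + D$ has $\langle A_G, M\rangle = 2m > 0$, this does not affect the argument.
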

To provide a piece of evidence, this conjecture is true for the graph $G$ in \cref{thm:quadrangle-construction}. The expander mixing lemma shows that $\surp(G) \ll \max(\lambda_2, -\lambda_n) \cdot n = O(q^6)$, which matches with $s^+(G) \sim q^6$. Furthermore, Balla, Janzer and Sudakov \cite{BJS} showed that $\surp(G) \gg \frac{m}{\chi_v(G)}$, and Coutinho-Spier \cite{Coutinho2023} showed that $s^+(G) \geq \frac{2m}{\chi_v(G)}$, where $\chi_v(G)$ is the vector chromatic number. This common lower bound suggests that a linear relation could exist between the two quantities.

The analogous conjecture is not true for $s^-(G)$, even though Coutinho and Spier's bound does apply to $s^-(G)$. Let $H$ be any regular graph. Let $G$ be the join of two copies of the complement $\overline{H}$ of $H$. Let $J$ denote the all-one matrix. Then we have
$$J - A_G = \begin{bmatrix}
    A_H + I & 0 \\
    0 & A_H + I
\end{bmatrix}.$$
Therefore, if $\lambda$ is an eigenvalue of $H$, then $-\lambda-1$ appears twice in the spectrum of $G$ (or once if $\lambda$ is the first eigenvalue), and all eigenvalues of $G$ except the first eigenvalue arise this way. Thus, we can show that
$$s^-(G) = O(s^+(H)).$$
On the other hand, the surplus of $G$ is at least $\abs{H}$. In particular, taking $H$ to be the graph in \cref{thm:quadrangle-construction}, we have $s^-(G) = O(\surp(G)^{6/7})$. I conjecture that this is the worst case scenario.
\begin{conjecture}
For any graph $G$, we have
$$s^-(G) = \Omega(\surp(G)^{6/7}).$$    
\end{conjecture}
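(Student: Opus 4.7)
Set $\sigma = \surp(G)$. The plan is a three-way case split on $m$ versus $\sigma$, combining \cref{thm:main-4} with the bound $s^-(G) = \Omega(m^{1/2})$ from \cref{thm:main-result-2}; the first two cases are immediate, and the intermediate case is where the real work lies.

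In the first case $m \leq \sigma^{8/7}$, \cref{thm:main-4} gives $s^-(G) \geq \sigma^2/m \geq \sigma^{6/7}$. In the second case $m \geq \sigma^{12/7}$, \cref{thm:main-result-2} gives $s^-(G) = \Omega(m^{1/2}) = \Omega(\sigma^{6/7})$. What remains is the intermediate window $\sigma^{8/7} < m < \sigma^{12/7}$. Notably, the extremal construction of \cref{thm:quadrangle-construction} sits exactly at $m \sim \sigma^{8/7}$, so the two endpoint bounds are already sharp in the regions they cover, and the intermediate window cannot be avoided by a more clever application of either inequality.

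For the intermediate range, the plan is to adapt the degree-partitioning strategy from the proof of \cref{thm:main-result-2}. Fix a maximum bipartite subgraph $H$ of $G$ witnessing $\mc(G) = m/2 + \sigma$, and bucket the vertices into $k = O(\log \log m)$ groups by their $H$-degree. The goal is to show that in every such partition, either (i) some bucket $V_i$ is dense in $H$-edges with $\abs{E(G[V_i])} \leq \sigma^{8/7}$, so that \cref{thm:main-4} applied to $G[V_i]$ gives $s^-(G[V_i]) = \Omega(\sigma^{6/7}/k^{O(1)})$; or (ii) a constant fraction of the $H$-edges lies between two buckets $V_i, V_j$ with a similarly favorable edge-to-surplus ratio on $G[V_i \cup V_j]$. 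Either conclusion transfers to $G$ via super-additivity \cref{thm:main-1}, with a factor of $k^{O(1)} = (\log\log m)^{O(1)}$ slack absorbed into $\Omega(\cdot)$ in the same spirit as \cref{thm:main-result-2}.

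The main obstacle will be the pigeonhole step: surplus is not subadditive across vertex partitions, so one must use the structure of $H$ (not just raw edge counts) to argue that some bucket or pair genuinely inherits a constant share of $\sigma$ while its induced subgraph has at most $\sigma^{8/7}$ edges. A secondary difficulty, echoing the $s^+$--$s^-$ asymmetry theme of the paper, is that there is no known $s^-$-analog of the triangle-counting bound \cref{thm:main-3}: if the surplus concentrates inside a single dense bucket with $\abs{E(G[V_i])} > \sigma^{8/7}$, the positive-side argument from \cref{thm:main-result-2} does not port over and a genuinely new $s^-$-side tool appears necessary.
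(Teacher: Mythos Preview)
The statement you are attempting is posed in the paper as an \emph{open conjecture}; the paper gives no proof and only offers the join construction preceding it as evidence that the exponent $6/7$ cannot be raised. So there is no paper proof to compare against.

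Your two endpoint cases are correct and are exactly the obvious consequences of \cref{thm:main-4} and the bound $s^-(G) = \Omega(m^{1/2})$. The intermediate window $\sigma^{8/7} < m < \sigma^{12/7}$ is precisely where the conjecture is open, and your own analysis already identifies the two real obstructions: surplus does not localize well under vertex partitions, and there is no known $s^-$ analog of the triangle-counting bound \cref{thm:main-3}. The degree-bucketing plan you sketch does not circumvent either issue. Even if some bucket $V_i$ inherits a constant share of the $H$-edges, nothing forces $\abs{E(G[V_i])} \leq \sigma^{8/7}$, and when it does not you are back to needing exactly the missing $s^-$-side tool. So what you have written is an honest outline of why the problem is hard, not a proof.

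One minor correction: in \cref{thm:main-result-2} the $(\log\log m)^{O(1)}$ factor is \emph{not} absorbed into the $\Omega(\cdot)$---it appears explicitly in the statement---so even a successful execution of your plan would a priori only yield $s^-(G) = \Omega\bigl(\sigma^{6/7}/(\log\log m)^{O(1)}\bigr)$, which is weaker than the conjecture as stated.
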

\bibliographystyle{plain}
\bibliography{bib.bib}

\end{document}